\newcommand{\changelocaltocdepth}[1]{%
	\addtocontents{toc}{\protect\setcounter{tocdepth}{#1}}%
	\setcounter{tocdepth}{#1}%
}
\def\l@subsection{\@tocline{2}{0pt}{2pc}{6pc}{}} \makeatother
\DeclareRobustCommand{\SkipTocEntry}[5]{}
\newcommand{\vsimeq}{\scaleobj{1}{\rotatebox[origin=c]{-90}{$\simeq$}}} 
\DeclareMathAlphabet{\mathcalligra}{T1}{calligra}{m}{n}
\DeclareFontShape{T1}{calligra}{m}{n}{<->s*[2.2]callig15}{}
\newtheorem{theorem}{Theorem}
\newtheorem{prop}[theorem]{Proposition}
\newtheorem{lemma}[theorem]{Lemma}
\newtheorem{coro}[theorem]{Corollary}
\newtheorem*{corollary*}{Corollary}
\newtheorem*{theorem*}{Theorem}
\newtheorem*{proposition*}{Proposition}
\newtheorem*{conjecture*}{Conjecture}
\numberwithin{equation}{section}
\numberwithin{theorem}{section}
\theoremstyle{remark}
\newtheorem{rmk}[theorem]{Remark}
\newtheorem{notation}[theorem]{Notation}
\theoremstyle{definition}
\newtheorem*{defi*}{Definition}
\newtheorem{deff}[theorem]{Definition}
\newcommand{\cC}{{\mathcal C}}
\newcommand{\cD}{{\mathcal D}}
\newcommand{\cE}{{\mathcal E}}
\newcommand{\cF}{{\mathcal F}}
\newcommand{\cI}{{\mathcal I}}
\newcommand{\cL}{{\mathcal L}}
\newcommand{\cN}{{\mathcal N}}
\newcommand{\cO}{{\mathcal O}}
\newcommand{\cP}{{\mathcal P}}
\newcommand{\cQ}{{\mathcal Q}}
\newcommand{\cS}{{\mathcal S}}
\newcommand{\cT}{{\mathcal T}}
\newcommand{\cV}{{\mathcal V}}
\newcommand{\cX}{{\mathcal X}}
\newcommand{{\bull}}{{\scriptscriptstyle{\bullet}}}
\newcommand{\Left}{\mathsf{Left}}
\newcommand{\Leftbeta}{\mathsf{Left}^{\mathsf{str}}}
\newcommand{\Leftlax}{\mathsf{Left}^{\mathsf{opd}}}
\newcommand{\st}{\mathsf{St}}
\newcommand{\unst}{\mathsf{Unst}}
\newcommand{\Op}{\mathsf{Op}}
\newcommand{\Cat}{\mathsf{Cat}}
\newcommand{\alg}{\mathsf{Alg}}
\newcommand{\env}{\mathsf{Env}}
\newcommand{\omg}{\ensuremath{\mathbf{\Omega}}}
\newcommand{\id}{\text{id}}
\newcommand{{\op}}{{{\rm op}}}
\newcommand{{\coop}}{{{\rm coop}}}
\newcommand{\adjunction}[4]{%
  #1\colon #2%
  \mathrel{\vcenter{%
    \offinterlineskip\m@th
    \ialign{%
      \hfil$##$\hfil\cr
      \longrightharpoonup\cr
      \noalign{\kern-.3ex}
      \smallbot\cr
      \longleftharpoondown\cr
    }%
  }}%
  #3 \noloc #4%
}
\newcommand{\longrightharpoonup}{\relbar\joinrel\rightharpoonup}
\newcommand{\longleftharpoondown}{\leftharpoondown\joinrel\relbar}
\newcommand\noloc{%
  \nobreak
  \mspace{6mu plus 1mu}
  {:}
  \nonscript\mkern-\thinmuskip
  \mathpunct{}
  \mspace{2mu}
}
\newcommand{\smallbot}{%
  \begingroup\setlength\unitlength{.15em}%
  \begin{picture}(1,1)
  \roundcap
  \polyline(0,0)(1,0)
  \polyline(0.5,0)(0.5,1)
  \end{picture}%
  \endgroup
}
\begin{document}

\title{A straightening-unstraightening equivalence for $\infty$-operads} 

\author{Francesca Pratali}

\begin{abstract}
We provide a straightening-unstraightening adjunction for $\infty$-operads in Lurie's formalism, and show it establishes an equivalence between the $\infty$-category of operadic left fibrations over an $\infty$-operad $\cO^\otimes$ and the $\infty$-category of $\cO^\otimes$-algebras in spaces. In order to do so, we prove that the Hinich-Moerdijk comparison functors induce an equivalence between the $\infty$-categories of operadic left fibrations and dendroidal left fibrations over an $\infty$-operad, and we characterize, for any symmetric monoidal $\infty$-category $\cC^\otimes$, the essential image of the monoidal unstraightening functor restricted to strong monoidal functors $\cC^\otimes\to \cS^\times$. 
 \end{abstract}

\address{F.P.: LAGA, Universit\'e Sorbonne Paris Nord, 99 avenue Jean-Baptiste Cl\'ement, 93430 Villetaneuse, France}

\email{pratali@math.univ-paris13.fr}

\keywords{$\infty$-operads, left fibrations, monoidal straightening-unstraightening, Grothendieck construction, dendroidal Segal spaces}

\subjclass[2020]{18N70, 18N45,	18N55, 55P48}

\maketitle

\tableofcontents

\section*{Introduction and main results}
\subsection*{Introduction} The theory of operads offers a general framework for encoding algebraic structures. Initially introduced in the early 1970s to capture homotopy-coherent algebraic operations on topological spaces (\cite{M:TGILS}, \cite{BV:HIASTS}), it has since then found numerous applications in different fields of mathematics besides algebraic topology, such as algebra, mathematical physics, algebraic geometry and combinatorics.

\noindent In the present work, we regard an operad $P$ as a multicategory: informally, an operad consists of a set of objects $P_0$ and, for any choice of objects $x_1,\dots,x_n,x $ in $ P_0$, multi-hom sets $P(x_1,\dots,x_n; x)$ together with the specification of composition laws. Set-valued functors on a category $C$ generalize to set-valued algebras on an operad $P$, which consist in a collection of sets $\{F(x)\}_{x\in P_0}$ together with action maps $ F(x_1)\times \dots \times F(x_n) \to F(x)$ for every multimorphism $(x_1,\dots,x_n)\to x$ in $P$, compatible with operadic composition.  For a symmetric monoidal category $V$ and $x_1,\dots,x_n,x$ objects in it, one can consider a morphism $x_1\otimes \dots \otimes x_n \to x$ in $V$ as a `representable' multimorphism $(x_1,\dots,x_n)\to x$, so that $V$ naturally yields an operad $\underline{V}$ with the same set of objects of $V$ and multi-hom sets given by $\underline{V}(x_1,\dots,x_n;x)= V(x_1\otimes\dots\otimes x_n;x)$; set-valued $\underline{V}$-algebras coincide with \emph{lax} symmetric monoidal functors $(V,\otimes)\to (\mathsf{Set},\times)$.

\noindent From a modern perspective, the notion of operad can be replaced by its fully homotopy-coherent analogue, known as \emph{$\infty$-operad}. One can say that $\infty$-operads offer a framework to work with homotopy-coherent algebraic structures in the setting of $\infty$-categories, the idea being that an $\infty$-operad $\cP$ has now multi-hom  \emph{spaces} $\cP(x_1,\dots,x_n;x)$ for any choice of objects $x_1,\dots,x_n,x$, and a $\cP$-algebra is a collection of spaces $F(x)\in \cS$ for any color $x$ of $\cP$, together with appropriately encoded action maps $\cP(x_1,\dots , x_n; x)\times F(x_1)\times \dots \times F(x_n)\to F(x)$. As it happens in the strict case, the theory of $\infty$-operads and algebras over them generalizes both the theory of $\infty$-categories and that of symmetric monoidal $\infty$-categories and lax monoidal functors.

\noindent Just as in the case of $\infty$-categories, there are several, equivalent models for $\infty$-operads, including those of Lurie \cite{Lu:HA}, Barwick \cite{B:FOCHO}, Moerdijk–Weiss dendroidal sets \cite{MW:DS} and Cisinki–Moerdijk complete dendroidal Segal spaces \cite{CM:DSSIO}. Although all of these models for $\infty$-operads result in equivalent $\infty$-categories (\cite{HeuHiMoe:OEBLMDMIO}, \cite{B:FOCHO}, \cite{CHH:TMHTIO}, \cite{HM:OELIODIO}), each of the model has its own dis/advantages. In this article, we use Lurie $\infty$-operads and dendroidal $\infty$-operads in the form of complete dendroidal Segal space. In Lurie's formalism, an $\infty$-operad is represented by its $\infty$-category of operators, and homotopy-coherence of the constructions is automatically ensured, while the operadic intuition is sometimes hard to recover. On the other hand, the dendroidal formalism represents an $\infty$-operad as a functor $\omg^{\text{op}}\to \cS$, where $\omg$ is a category of trees giving the shapes of multimorphisms and their compositions, so the combinatorics are often more involved or of a different nature than the ones for simplicial sets. 

\noindent In the context of $\infty$-categories, the Lurie-Grothendieck equivalence \cite{Lu:HTT}, also known as straightening-unstraightening equivalence, gives a way of manipulating $\cS$-valued functors out of an $\infty$-category $\cC$. This is accomplished by establishing an equivalence 
$$\adjunction{\st^\cC}{\Left_\cC}{\mathsf{Fun}(\cC, \cS)}{\unst^\cC}$$ between the $\infty$-category of functors $\cC\to \cS$ and the $\infty$-category $\Left_\cC$ of left fibrations $\cD\to \cC$ over $\cC$, the higher categorical generalisation of categories fibred in groupoids in ordinary category theory. The un/straightening theorem generalizes Grothendieck’s equivalence between categories cofibred in groupoids and set-valued pseudofunctors, as well as the correspondence between covering spaces and sets with an action of the fundamental group of the base, and is a crucial tool in the theory of symmetric monoidal $\infty$-categories and the theory of $\infty$-operads.

\noindent When $\cC$ is the underlying $\infty$-category of a symmetric monoidal $\infty$-category $\cC^\otimes$, the un/straightening equivalence allows us to similarly work with \emph{lax monoidal} functors $\cC^\otimes\to \cS^\times$. This was first proven by Hinich in \cite{H:RAM}, who proved that the un/straightening equivalence is the underlying functor of an equivalence $$\adjunction{\st^{\cC,\otimes}}{\mathsf{sm}\Left_{\cC^\otimes}}{\mathsf{Fun}^{\text{lax}}(\cC^\otimes, \cS^\times)}{\unst^{\cC,\otimes}}, $$ where $\mathsf{sm}\Left_{\cC^\otimes}$ is the $\infty$-categories of left fibrations over $\cC^\otimes$ internal to symmetric monoidal $\infty$-categories. Motivated by Ramzi's result in \cite{R:MGCIC}, of which the above one is a corollary, in the context of this work we refer to the above equivalence the \emph{monoidal un/straightening equivalence}.

\noindent Following the same lines, a straightening-unstraightening equivalence for $\infty$-operads needs to establish an equivalence of $\infty$-categories $$ \adjunction{\st^\cP}{{\mathsf{Left}\left(\Op_\infty\right)}_{\cP}}{\alg_\cP(\cS^\times)}{\unst^\cP}$$ between an $\infty$-category of left fibrations over $\cP$ internal to $\infty$-operads and the $\infty$-category of $\cS$-valued $\cP$ algebras.

\noindent In this article, we construct and prove a straightening-unstraightening equivalence for Lurie $\infty$-operads.

\subsection*{Strategy and main results} Our construction makes essential use of two fundamental tools: the \emph{Hinich-Moerdijk comparison functors} (\cite{HM:OELIODIO}), which consist of a pair of adjoint functors that establish a direct equivalence $$ \adjunction{\delta}{\ell\Op_\infty}{\mathsf{D}\Op_\infty}{\lambda}$$ between the $\infty$-category of Lurie $\infty$-operads and that of dendroidal $\infty$-operads, and the \emph{symmetric monoidal envelope functor}, a functor $$  \env(-)^\otimes\colon \ell\Op_\infty\longrightarrow \mathsf{sm}\Cat_\infty$$ from the $\infty$-category of Lurie $\infty$-operads to that of symmetric monoidal $\infty$-categories with strong monoidal functors, left adjoint to the forgetful functor. Its universal property provides, for any Lurie $\infty$-operad $\cO^\otimes$ and any symmetric monoidal $\infty$-category $\cC^\otimes$, a natural equivalence $$\alg_{\cO^\otimes}(\cC^\otimes)\simeq \mathsf{Fun}^{\text{str}}(\env(\cO)^\otimes, \cC^\otimes)$$ between the $\infty$-category of $\cO^\otimes$-algebras in $\cC^\otimes$ and strong monoidal functors $\env(\cO)^\otimes \to \cC^\otimes$. 

\noindent First, we define the $\infty$-category $\Leftlax_{\cO^\otimes}\coloneqq \Left(\ell\Op_\infty)_{\cO^\otimes}$ of \emph{operadic left fibrations}  over a Lurie $\infty$-operad $\cO^\otimes$ (\Cref{opdleft}): it is the full sub $\infty$-category of ${\ell\Op_\infty}_{/\cO^\otimes}$ spanned by those objects $(\cX^\otimes, p)$ for which $p\colon \cX^\otimes \to \cO^\otimes$ is a left fibration of $\infty$-categories.

\noindent Our first result consists in proving that our definition is consistent with an analogous notion of left fibration formulated in the dendroidal formalism. We consider the notion of \emph{dendroidal left fibration} of dendroidal spaces as defined by Boavida-Moerdijk in \cite{BdBM:DSGSSBPQT}, which takes its origins in Heuts definition \cite{He:AOIO} for dendroidal sets. 

\noindent More precisely, consider a dendroidal $\infty$-operad $X$ and write $\mathsf{D}\Left_X$ for the $\infty$-category of dendroidal left fibrations over $X$. It is a full sub $\infty$-category of the over $\infty$-category ${\mathsf{DOp}_\infty}_{/X}$.
\begin{theorem*}[\Cref{luppolo}]
	For any Lurie $\infty$-operad $\cO^\otimes$, the Hinich-Moerdijk comparison functors induce an equivalence of $\infty$-categories $$ \adjunction{\delta}{\Leftlax_{\cO^\otimes}}{\mathsf{D}\Left_X}{\lambda},$$ where $X\simeq \delta (\cO^\otimes)$ is the dendroidal model for $\cO^\otimes$.
\end{theorem*}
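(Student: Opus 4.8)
The plan is to deduce the stated equivalence formally from the ambient equivalence $\delta\dashv\lambda$, and then to identify the two a priori different notions of left fibration with a single covariant criterion that the comparison functors preserve and reflect. I would work throughout with the cocartesian/lifting conditions directly, so as not to presuppose the straightening equivalence that this theorem is meant to feed into. Since $\delta$ and $\lambda$ form an adjoint equivalence $\ell\Op_\infty\simeq\mathsf{D}\Op_\infty$, they induce an adjoint equivalence of slice $\infty$-categories ${\ell\Op_\infty}_{/\cO^\otimes}\simeq{\mathsf{D}\Op_\infty}_{/X}$, where I use the unit and counit equivalences $\delta(\cO^\otimes)\simeq X$ and $\lambda(X)\simeq\cO^\otimes$ to identify the two bases. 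By construction $\Leftlax_{\cO^\otimes}$ and $\mathsf{D}\Left_X$ are full sub $\infty$-categories of these slices, and an equivalence restricts to an equivalence between full subcategories exactly when it preserves and reflects the defining property. The theorem therefore reduces to the single biconditional: a map $p\colon\cX^\otimes\to\cO^\otimes$ of $\infty$-operads is an operadic left fibration in the sense of \Cref{opdleft} if and only if $\delta(p)$ is a dendroidal left fibration. The corresponding statement for $\lambda$ is then automatic.

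To prove this biconditional I would first rewrite both conditions fiberwise. On the Lurie side, a functor of $\infty$-categories is a left fibration precisely when it is a cocartesian fibration whose fibers are $\infty$-groupoids, i.e. every object of $\cO^\otimes$ has a space-valued fiber and every morphism admits an essentially unique cocartesian lift. On the dendroidal side I would invoke the Boavida--Moerdijk characterization and reformulate it as a fiberwise groupoidal condition over each color together with a homotopy-pullback (covariance) condition along a generating class of tree morphisms. The aim is to show that, once colors are matched with edges and $n$-ary operations with $n$-corollas under the comparison, these two packages of data coincide, so that $p$ and $\delta(p)$ encode the same covariant structure.

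The technical heart, and the step I expect to be the main obstacle, is bridging the combinatorially distinct lifting conditions---simplicial left-horn fillings versus dendroidal left-anodyne fillings---without comparing the two anodyne classes by hand. The route I favour is to reduce the left-fibration property in both models to a cartesian/pullback condition over the elementary objects $\omgel$ (edges and corollas), and then to exploit the Segal conditions, which hold for both $\infty$-operads. Concretely, I would prove a lemma characterizing dendroidal left fibrations of $\infty$-operads as exactly those maps whose restriction to every corolla is a left fibration of spaces and for which the Segal squares are homotopy pullbacks, mirroring the Lurie description; since the Hinich--Moerdijk functors respect edges, corollas, grafting, and Segal squares, this characterization then transports across $\delta$ and $\lambda$. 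The delicate point is the propagation argument: one must check that covariance over the distinguished generating (root/leaf) morphisms already forces covariance over all operadic morphisms by way of the Segal conditions, so that the dendroidal condition is genuinely equivalent to the full cocartesian condition and hence aligns with the Lurie notion under the comparison.
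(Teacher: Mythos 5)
Your outer reduction agrees with the paper's: pass to the induced equivalence of slices ${\ell\Op_\infty}_{/\cO^\otimes}\simeq{\mathsf{D}\Op_\infty}_{/X}$ and observe that, since both $\Leftlax_{\cO^\otimes}$ and $\mathsf{D}\Left_X$ are full subcategories, everything reduces to the biconditional that $(Y,f)$ is a dendroidal left fibration if and only if $\lambda(Y,f)$ is an operadic left fibration. The problem is that your proposal does not actually prove this biconditional. You name it as ``the technical heart'' and ``the main obstacle,'' propose a characterization of dendroidal left fibrations in terms of corollas and Segal squares, and then defer both that characterization and the ``propagation argument'' needed to make it equivalent to the full covariance condition. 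As written, the key lemma is a statement of intent, not an argument, and the claim that the Hinich--Moerdijk functors ``respect edges, corollas, grafting, and Segal squares'' is not enough to transport a fiberwise condition: $\lambda$ is the restriction $w^*$ along a functor $w\colon\mathbb{F}\to\phi$ on shape categories, and one has to say precisely how simplices of $\cO^\otimes$ correspond to forests mapping to $X$ before any fiberwise data can be matched.

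The paper closes exactly this gap by a different and more economical mechanism. Both notions of left fibration are already locality conditions: dendroidal left fibrations are the $\cL$-local objects for the leaf inclusions $\cL=\{\ell(F)\hookrightarrow F\}_{F\in\phi}$, and operadic left fibrations are the objects that become $\cI$-local for $\cI=\{\Delta^0\xrightarrow{\{0\}}\Delta^n\}$ after forgetting to ${\Cat_\infty}_{/\cO^\otimes}$. The adjunction $(w_!,w^*)$ on presheaf slices then gives a natural equivalence
$$\mathsf{Map}_{{\Cat_\infty}_{/\cO^\otimes}}\bigl((\Delta^n,p),(\cD^\otimes,\alpha^\otimes)\bigr)\simeq \mathsf{Map}_{{\mathsf{D}\Op_\infty}_{/X}}\bigl((w(\overline{p}),q),(Y,f)\bigr),$$
compatible with restriction along $\{0\}$, resp.\ along the leaf inclusion, so $\cL$-locality immediately implies $\cI$-locality. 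The converse is the direction your Segal-propagation sketch is trying to supply, and the paper obtains it from a nontrivial combinatorial input you do not have a substitute for: every forest is a retract of one of the form $w(\overline{p})$ (\cite[Lemma 3.1.2]{HM:OELIODIO}), so locality against the leveled forests coming from simplices already forces locality against all of $\cL$. If you want to pursue your corolla-plus-Segal reduction instead, you would need to prove that leaf inclusions of arbitrary forests lie in the saturation of leaf inclusions of corollas together with Segal maps; that is a genuine lemma requiring its own induction on trees, and without it (or the retract lemma) one direction of the biconditional is unproven.
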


\noindent A consequence of this result is \Cref{fibrewise1}, where we characterize equivalences between operadic left fibrations as fibrewise homotopy equivalences of spaces. 

\noindent As a second step, motivated by the universal property of the envelope, we characterize the left fibrations $(\cD^\otimes, \alpha^\otimes)$ in $\mathsf{sm}\Left_{\cC^\otimes}$ that correspond to the monoidal unstraightening of strong monoidal functors for \emph{any} symmetric monoidal $\infty$-category $\cC^\otimes$. 

\begin{proposition*}[\Cref{tiprego}]
Let $(\cD^\otimes, \alpha^\otimes)$ be an object of $\mathsf{sm}\Left_{\cC^\otimes}$. 
The following conditions are equivalent: 
\begin{enumerate}
	\item  $(\cD^\otimes, \alpha^\otimes)\simeq \unst^{\cC,\otimes}(F^\otimes)$ for a \emph{strong} monoidal functor $F^\otimes \colon \cC^\otimes 	\to \cS^\times$;
	\item for any $x\in \cC^\otimes_{\underline{n}_+}$, $x\simeq x_1\oplus\dots\oplus x_n $, the induced map between the fibres $$ (\cD^\otimes,\alpha^\otimes)_{x_1\oplus\dots\oplus x_n}\longrightarrow (\cD^\otimes,\alpha^\otimes)_{x_1\otimes\dots\otimes x_n}$$ induced by $\beta_! \colon x\to x_1\otimes \dots \otimes x_n$ in $\cC^\otimes$ is an equivalence.
\end{enumerate}
\end{proposition*}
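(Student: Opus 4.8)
The plan is to reduce the statement, via the monoidal un/straightening equivalence, to a direct translation between the notion of strong monoidal functor and the fibrewise condition (2). Since $\unst^{\cC,\otimes}$ is an equivalence, every $(\cD^\otimes,\alpha^\otimes)$ is equivalent to $\unst^{\cC,\otimes}(F^\otimes)$ for an essentially unique lax monoidal functor $F^\otimes=\st^{\cC,\otimes}(\cD^\otimes,\alpha^\otimes)$, and condition (1) is precisely the assertion that this $F^\otimes$ is strong monoidal. It therefore suffices to show that $F^\otimes$ is strong monoidal if and only if the transport maps described in (2) are equivalences. Writing $F\coloneqq F^\otimes|_{\cC}\colon \cC\to\cS$ for the underlying functor, strong monoidality of $F^\otimes$ means exactly that all the lax structure constraints $\mu_{x_1,\dots,x_n}\colon F(x_1)\times\dots\times F(x_n)\to F(x_1\otimes\dots\otimes x_n)$, including the unit constraint (the case $n=0$), are equivalences.

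The main computation is to identify the fibres of $\alpha^\otimes$ and its transport maps. First I would record the compatibility of the monoidal unstraightening with the ordinary Lurie unstraightening over $\cC^\otimes$ viewed as a plain $\infty$-category: the underlying left fibration of $\alpha^\otimes$ is classified by the functor $\Phi\colon \cC^\otimes\to\cS$ obtained by postcomposing $F^\otimes$ with the product functor $\cS^\times\to\cS$, $(X_1,\dots,X_n)\mapsto X_1\times\dots\times X_n$. Granting this, for any object $x\in\cC^\otimes$ the fibre $(\cD^\otimes,\alpha^\otimes)_x$ is $\Phi(x)$, and for a morphism $g$ in $\cC^\otimes$ the induced map on fibres is $\Phi(g)$. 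For $x\simeq x_1\oplus\dots\oplus x_n$ over $\underline n_+$, the Segal/inert identifications give $\Phi(x)\simeq F(x_1)\times\dots\times F(x_n)$, while over $\underline 1_+$ one has $\Phi=F$, so $(\cD^\otimes,\alpha^\otimes)_{x_1\otimes\dots\otimes x_n}\simeq F(x_1\otimes\dots\otimes x_n)$. This is what makes the two sides of the map in (2) take the stated form.

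It then remains to identify the transport map along the cocartesian lift $\beta_!\colon x\to x_1\otimes\dots\otimes x_n$ of the active map $\underline n_+\to\underline 1_+$. By the previous paragraph this transport is $\Phi(\beta_!)$, and unwinding the construction of $\Phi$ together with the functoriality of $F^\otimes$ on the active morphism $\beta_!$ shows that it agrees with the lax structure constraint
\[
\mu_{x_1,\dots,x_n}\colon F(x_1)\times\dots\times F(x_n)\longrightarrow F(x_1\otimes\dots\otimes x_n).
\]
Consequently condition (2) holds for all $x$ if and only if every $\mu_{x_1,\dots,x_n}$ is an equivalence, which is exactly strong monoidality of $F^\otimes$, i.e. condition (1).

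I expect the crux to be the two identifications in the last two paragraphs: (a) the compatibility of $\unst^{\cC,\otimes}$ with the ordinary unstraightening over the total category $\cC^\otimes$, realizing the underlying left fibration as the unstraightening of the composite of $F^\otimes$ with the product functor $\cS^\times\to\cS$; and (b) the matching of the cocartesian transport along $\beta_!$ with the lax constraint $\mu$. Both require unwinding the explicit construction of the monoidal unstraightening following Hinich and Ramzi; once they are in place the equivalence of (1) and (2) is formal, and the remaining bookkeeping — that ranging over all objects $x$ and all of their Segal decompositions captures precisely the full family of structure constraints, unit included — is routine.
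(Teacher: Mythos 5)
Your proposal is correct and follows essentially the same route as the paper: both reduce, via the monoidal un/straightening equivalence, to showing that $F^\otimes$ is strong monoidal exactly when the coCartesian transport along $\beta_!$ on fibres agrees with (and hence is an equivalence together with) the lax structure map $\prod_i F(x_i)\to F(x_1\otimes\dots\otimes x_n)$, using the fibrewise identification $\unst(F)_c\simeq F(c)$ and the Segal decomposition of the fibre over $x_1\oplus\dots\oplus x_n$. Your packaging of this identification through the classifying functor $\Phi=\prod\circ F^\otimes$ of the total left fibration is just a more explicit phrasing of the comparison the paper makes between its two displayed maps, and the compatibility you flag as the crux is precisely the point the paper also leans on (its \Cref{fibremanch}).
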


\noindent We call $\mathsf{sm}\Leftbeta_{\cC^\otimes}$ the full subcategory of $\mathsf{sm}\Left_{\cC^\otimes}$ spanned by those objects satisfying the condition in \Cref{tiprego}; as it is the essential image of strong monoidal functors, we call the elements therein \emph{strong symmetric monoidal (sm) left fibrations} over $\cC^\otimes$. We use \Cref{tiprego} with $\cC^\otimes \simeq \env(\cO)^\otimes$, and by universal property of the envelope we obtain the equivalence
$$\adjunction{\st^{\env(\cO),\otimes}}{\mathsf{sm}\Leftbeta_{\env(\cO)^\otimes}}{\alg_{\cO^\otimes}(\cS^\times)}{\unst^{\env(\cO),\otimes}}.$$

\noindent The next step is to relate the $\infty$-category of strong sm left fibrations over $\env(\cO)^	\otimes$ with that of operadic left fibrations over  $\cO^\otimes$. In order to do so, we observe that, albeit the envelope is not a fully faithful functor, it is a consequence of \cite[Proposition 2.4.3]{HK:IOSMIC} that it becomes so when sliced over the terminal $\infty$-operad. In particular, for any Lurie $\infty$-operad $\cO^\otimes$ the envelope defines a fully faithful left adjoint
$$
\adjunction{\env(-)^\otimes}{{\ell\Op_\infty}_{/\cO^\otimes}}{{\mathsf{sm}\Cat_\infty}_{/\env(\cO^\otimes)}}{G'}, $$
\noindent where the right adjoint $G'$ is the base-change along the unit $\cO^\otimes \to \env(\cO)^\otimes$ after the forgetful functor. In \Cref{kriku} we prove that this adjunction restricts to a well-defined adjunction $$ \adjunction{\env(-)^\otimes}{\Leftlax_{\cO^\otimes}}{\mathsf{sm}\Left_{\env(\cO)^\otimes}}{G}$$ where, by applying a consequence of \Cref{luppolo}, the right adjoint is conservative, which means that the adjunction is in fact an equivalence.

\noindent These results together allow to prove our straightening-unstraightening equivalence for $\infty$-operads. 
\begin{theorem*}[\Cref{mainnnn}]
	For any Lurie $\infty$-operad $\cO^\otimes$, there is an equivalence of $\infty$-categories $$ \adjunction{\st^\cO}{\Leftlax_{\cO^\otimes}}{\alg_{\cO^\otimes}(\cS^\times)}{\unst^\cO},$$ where the left adjoint is given by the composition $$ \st^{\cO}\colon \Leftlax_{\cO^\otimes} \xrightarrow{\env(-)^\otimes}\mathsf{sm}\Leftbeta_{\env(\cO)^\otimes} \xrightarrow{\st^{\env(-),\otimes}} \mathsf{Fun}^{\text{str}}(\env(\cO)^\otimes,\cS^\times)\simeq \alg_{\cO^\otimes}(\cS^\times).$$ 
	
\end{theorem*}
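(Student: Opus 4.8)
The plan is to show that the displayed functor $\st^\cO$ is an equivalence by arguing that each of the three arrows in its defining composite is one; the asserted adjunction $\st^\cO\dashv\unst^\cO$ is then the resulting adjoint equivalence, with $\unst^\cO$ the inverse assembled from the inverses of the three arrows. The last identification $\mathsf{Fun}^{\text{str}}(\env(\cO)^\otimes,\cS^\times)\simeq\alg_{\cO^\otimes}(\cS^\times)$ is the universal property of the envelope. The middle arrow $\st^{\env(-),\otimes}\colon \mathsf{sm}\Leftbeta_{\env(\cO)^\otimes}\to\mathsf{Fun}^{\text{str}}(\env(\cO)^\otimes,\cS^\times)$ is an equivalence because, by the monoidal un/straightening equivalence, $\unst^{\env(\cO),\otimes}$ is an equivalence onto $\mathsf{sm}\Left_{\env(\cO)^\otimes}$, while \Cref{tiprego} identifies $\mathsf{sm}\Leftbeta_{\env(\cO)^\otimes}$ with the essential image of its restriction to strong monoidal functors; restricting an equivalence to a full subcategory and its essential image again yields an equivalence, with inverse the corresponding restriction of $\st^{\env(\cO),\otimes}$. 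Thus the crux is the first arrow.

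For the first arrow I would first invoke \Cref{kriku}: the envelope defines a fully faithful left adjoint $\env(-)^\otimes\colon \Leftlax_{\cO^\otimes}\to\mathsf{sm}\Left_{\env(\cO)^\otimes}$ whose right adjoint $G$ is conservative, and a fully faithful left adjoint with conservative right adjoint is an equivalence (the unit is invertible, so by the triangle identity $G$ of the counit is invertible, whence the counit itself is, by conservativity). In particular $\env(-)^\otimes\colon\Leftlax_{\cO^\otimes}\to\mathsf{sm}\Left_{\env(\cO)^\otimes}$ is essentially surjective. It then remains only to check that its essential image is contained in the full subcategory $\mathsf{sm}\Leftbeta_{\env(\cO)^\otimes}$: once this is done, essential surjectivity forces $\mathsf{sm}\Leftbeta_{\env(\cO)^\otimes}=\mathsf{sm}\Left_{\env(\cO)^\otimes}$, so that $\env(-)^\otimes$ corestricts to the claimed equivalence onto $\mathsf{sm}\Leftbeta_{\env(\cO)^\otimes}$.

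For this containment I would verify the fibrewise criterion of \Cref{tiprego} directly for $\env(p)^\otimes\colon\env(\cX)^\otimes\to\env(\cO)^\otimes$, where $p\colon\cX^\otimes\to\cO^\otimes$ is an operadic left fibration. By the construction of the envelope, an object of $\env(\cO)^\otimes$ over $\underline 1_+$ is a tuple of colors of $\cO$, and the fibre of $\env(\cX)^\otimes$ over such a tuple is the product of the fibres of $p$ over the individual colors. Hence, for $x\simeq x_1\oplus\dots\oplus x_n$ with each $x_i$ over $\underline 1_+$, both the fibre over $x$ and the fibre over $x_1\otimes\dots\otimes x_n$ compute the product $\prod_i\env(\cX)^\otimes_{x_i}$ — the former by the Segal description of the fibre over $\underline n_+$, the latter because $x_1\otimes\dots\otimes x_n$ is the concatenated tuple — and the map induced by $\beta_!$ is the resulting canonical equivalence. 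Equivalently, the monoidal straightening of $\env(p)^\otimes$ is the functor $(c_1,\dots,c_k)\mapsto\prod_j\cX_{c_j}$ extending the $\cO^\otimes$-algebra classified by $p$, which is strong precisely because the envelope is free and $\cS^\times$ carries the cartesian structure.

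The main obstacle I anticipate is exactly this last verification: pinning down the description of the fibres of the envelope of an operadic left fibration and checking that the $\beta_!$-induced comparison is an equivalence. Everything else is the formal assembly of the three equivalences, together with the elementary fact about adjunctions used to upgrade \Cref{kriku} to an equivalence. Once the containment of the essential image of $\env(-)^\otimes$ in $\mathsf{sm}\Leftbeta_{\env(\cO)^\otimes}$ is established, composing the three equivalences shows that $\st^\cO$ is an equivalence, and defining $\unst^\cO$ as its inverse completes the adjoint equivalence.
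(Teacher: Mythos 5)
Your overall decomposition of $\st^\cO$ into the envelope equivalence followed by the restricted monoidal un/straightening is exactly the paper's, and your treatment of the middle arrow (via \Cref{tiprego}) and of the containment of the essential image of $\env(-)^\otimes$ in $\mathsf{sm}\Leftbeta_{\env(\cO)^\otimes}$ (via the Segal description of the fibres of the envelope) matches the paper's argument. However, your treatment of the first arrow contains a genuine error. You assert that the right adjoint $G\colon \mathsf{sm}\Left_{\env(\cO)^\otimes}\to \Leftlax_{\cO^\otimes}$ is conservative on \emph{all} of $\mathsf{sm}\Left_{\env(\cO)^\otimes}$, deduce that $\env(-)^\otimes$ is essentially surjective onto $\mathsf{sm}\Left_{\env(\cO)^\otimes}$, and conclude that $\mathsf{sm}\Leftbeta_{\env(\cO)^\otimes}=\mathsf{sm}\Left_{\env(\cO)^\otimes}$. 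This conclusion is false in general: under the monoidal un/straightening, $\mathsf{sm}\Left_{\env(\cO)^\otimes}$ corresponds to \emph{lax} monoidal functors $\env(\cO)^\otimes\to\cS^\times$, i.e.\ to $\alg_{\env(\cO)^\otimes}(\cS^\times)$, while $\mathsf{sm}\Leftbeta_{\env(\cO)^\otimes}$ corresponds to strong ones, i.e.\ to $\alg_{\cO^\otimes}(\cS^\times)$; already for $\cO^\otimes=\mathsf{Comm}^\otimes$ these differ (e.g.\ the constant lax monoidal functor at a non-trivial commutative algebra $A$ in $\cS$ is not strong). Correspondingly, $G$ is \emph{not} conservative on the whole of $\mathsf{sm}\Left_{\env(\cO)^\otimes}$: on straightened objects it amounts to restriction along the unit $\cO^\otimes\to\env(\cO)^\otimes$, which only remembers the values on singleton lists, and the canonical map from the strong extension $S\mapsto A^{\times |S|}$ to the constant lax functor at $A$ is an equivalence on singletons but not in general. \Cref{kriku} does not assert what you attribute to it.

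The missing step is therefore essential surjectivity of $\env(-)^\otimes\colon\Leftlax_{\cO^\otimes}\to\mathsf{sm}\Leftbeta_{\env(\cO)^\otimes}$ onto the \emph{strong} sm-left fibrations. The paper obtains this by proving that $G$ is conservative \emph{after restricting to} $\mathsf{sm}\Leftbeta_{\env(\cO)^\otimes}$, and the strongness condition is used in an essential way: for a list $d=(c_1,\dots,c_n)\in\env(\cO)$ the fibre $X_d$ is compared with $GX_c\simeq\prod_i X_{(c_i)}$ via the map induced by $\beta_!$, and it is precisely membership in $\mathsf{sm}\Leftbeta_{\env(\cO)^\otimes}$ that makes this map an equivalence. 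The comparison of the two fibrewise criteria for equivalences (\Cref{fibrewise1}, which rests on the dendroidal comparison of \Cref{luppolo}, and \Cref{fibrewise2}) then yields conservativity on the subcategory, and your general lemma about fully faithful left adjoints with conservative right adjoints applies there. Your proposal as written never establishes this restricted conservativity, so the equivalence of the first arrow, and hence of $\st^\cO$, is not proved.
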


\noindent Exploiting the presentation of the monoidal straightening equivalence for discrete categories of \cite[Theorem 4.4]{P:RDLF}, we can also state the following explicit formula for the operadic straightening functor when $\cO^\otimes$ is discrete.

\begin{corollary*}[\Cref{coromoro}]
	For a \emph{discrete} $\infty$-operad $\cO^\otimes$, the operadic straightening equivalence can be explicitely written in the following terms. Given an operadic left fibration $(\cT^\otimes, \alpha^\otimes)$ over $\cO^\otimes$ and an object $x$ of $\cO$, the value at $x$ of the $\cO^\otimes$-algebra $\st^{\cO}(\cT^\otimes, \alpha^\otimes)$ is given by $$ \st^\cO(\cT^\otimes, \alpha^\otimes)(x)\simeq \env(\cT)\times_{\env(\cO)} {\env(\cO)}_{/x}.$$ 
\end{corollary*}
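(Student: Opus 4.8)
The plan is to unwind the definition of the operadic straightening functor given in \Cref{mainnnn} and then feed it into the explicit description of the monoidal straightening over \emph{discrete} symmetric monoidal categories provided by \cite[Theorem 4.4]{P:RDLF}. The discreteness hypothesis on $\cO^\otimes$ is precisely what licenses replacing the abstract straightening by this $1$-categorical formula.

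First I would reduce to a computation of a single monoidal straightening. By \Cref{mainnnn}, the algebra $\st^\cO(\cT^\otimes,\alpha^\otimes)$ is, under the universal-property equivalence $\mathsf{Fun}^{\text{str}}(\env(\cO)^\otimes,\cS^\times)\simeq \alg_{\cO^\otimes}(\cS^\times)$, the image of the strong monoidal functor $F^\otimes\coloneqq \st^{\env(\cO),\otimes}(\env(\cT)^\otimes,\env(\alpha)^\otimes)$, where $(\env(\cT)^\otimes,\env(\alpha)^\otimes)$ is the strong sm left fibration produced by \Cref{kriku}. Under this equivalence the value of the algebra at a color $x$ of $\cO$ is computed by evaluating the underlying functor $F\colon \env(\cO)\to\cS$ of $F^\otimes$ at the image of $x$ along the canonical inclusion $\cO\hookrightarrow\env(\cO)$ of the colors as the generating objects of the envelope (coming from the unit $\cO^\otimes\to\env(\cO)^\otimes$); I suppress this inclusion and write $F(x)$. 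Thus the statement reduces to identifying $F(x)$.

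Second I would invoke discreteness. Since $\cO^\otimes$ is discrete, its envelope $\env(\cO)^\otimes$ is a discrete symmetric monoidal category whose underlying category is the free symmetric monoidal category $\env(\cO)$ on the multicategory $\cO$; likewise $\env(\cT)^\otimes$ is discrete and the underlying functor of $\env(\alpha)^\otimes$ is $\env(\alpha)\colon \env(\cT)\to\env(\cO)$, which by \Cref{kriku} is a left fibration. I may then apply the explicit presentation of the monoidal straightening for discrete categories of \cite[Theorem 4.4]{P:RDLF}: the underlying functor of the monoidal straightening of a strong sm left fibration over a discrete base is given by the comma (coend) formula, so that for every object $y\in\env(\cO)$ one has $F(y)\simeq \env(\cT)\times_{\env(\cO)}\env(\cO)_{/y}$, where the right-hand side denotes the classifying space of the comma $\infty$-category of $\env(\alpha)$ over the slice $\env(\cO)_{/y}$. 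Specializing $y$ to (the image of) the color $x$ and combining with the first step yields $\st^\cO(\cT^\otimes,\alpha^\otimes)(x)\simeq F(x)\simeq \env(\cT)\times_{\env(\cO)}\env(\cO)_{/x}$, as claimed.

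The step requiring the most care is the bookkeeping across the two equivalences rather than any hard analysis. I must check that evaluation of the algebra at a color $x$ corresponds, under the envelope's universal property, to evaluation of the underlying monoidal functor at the generating object $x$, and that the variance in \cite[Theorem 4.4]{P:RDLF} is the one producing the \emph{over}-slice $\env(\cO)_{/x}$. Concretely, $\env(\alpha)$ being a left fibration classified by $F$ means the classifying space of $\env(\cT)\times_{\env(\cO)}\env(\cO)_{/x}$ is $\colim_{\env(\cO)_{/x}}(F\circ\dom)$; since $\mathrm{id}_x$ is a terminal object of $\env(\cO)_{/x}$ projecting to $x$ under $\dom$, and terminal objects are cofinal, this colimit is $F(x)$, confirming both the variance and the implicit classifying-space reading of the fibre product. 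The discreteness of $\env(\cO)^\otimes$ is what makes the $1$-categorical formula of \cite{P:RDLF} applicable in place of the general straightening.
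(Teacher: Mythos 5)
Your proposal is correct and follows essentially the same route as the paper, which states \Cref{coromoro} as an immediate consequence of unwinding $\st^\cO$ as the composite $\st^{\env(\cO),\otimes}\circ\env(-)^\otimes$ from \Cref{mainnnn} and then applying the explicit discrete formula of \cite[Theorem 4.4]{P:RDLF} (in the form of \cite[Corollary 5.3]{P:RDLF}). Your added cofinality check that the fibre product over $\env(\cO)_{/x}$ recovers $F(x)$ is a correct and useful verification of the variance, though the paper leaves it implicit; the only slip is attributing to \Cref{kriku} the fact that $\env(\alpha)$ is a left fibration, which is really \Cref{pollo}.
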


\subsection*{Relation with other works }\label{relation} Let us make some comments about the relation with other straightening-unstraightening equivalences.

\begin{enumerate}
	
	\item  An operadic un/straightening equivalence $\Leftlax_{\cO^\otimes}\simeq \alg_{\cO^\otimes}(\cS^\times)$ can be deduced from Ramzi's \emph{$\cO$-monoidal Grothendieck construction} of \cite{R:MGCIC}, and in particular from [Corollary C] therein, which, by chosing $\cC=\cO$ yields an equivalence of $\infty$-categories  
	\begin{equation}\label{maxime}
		\mathsf{Left}_{\cO}^{\cO-lax}\simeq \mathsf{Fun}^{\cO-lax}(\cO,\cS), 
	\end{equation}  and one easily sees that $\mathsf{Left}_{\cO}^{\cO-lax}\simeq \mathsf{Left}^\mathsf{opd}_{\cO^\otimes}$, see also \Cref{crefs}.
	
	\noindent The same [Corollary C], when instantiated for $\cO=\mathsf{Fin}_*$ and $\cC$ a symmetric monoidal $\infty$-category, implies the equivalence of $\infty$-categories $\mathsf{smLeft}_{\cC^\otimes}\simeq \mathsf{Fun}^{\mathsf{lax}}(\cC^\otimes,\cS^\times)$ which we use in \Cref{tiprego} and ultimately to prove \Cref{mainnnn}. The latter equivalence was already proven by Hinich in \cite{H:RAM} without relying on the monoidal Grothedieck construction, and this makes our work independent from \cite{R:MGCIC}.
	
	\noindent The equivalence between the $\infty$-categories of dendroidal and operadic left fibrations of \Cref{luppolo} allows to characterize the equivalences in $\mathsf{Left}^\mathsf{opd}_{\cO^\otimes}\simeq \mathsf{Left}_\cO^{\cO-lax}$ as the fibrewise equivalences, an insight which is of independent interest.
	
	\noindent Additionally, in \Cref{tiprego}, we describe the unstraightening of \emph{strong} monoidal functors, which, for us, cannot be immediately inferred from \cite{R:MGCIC}.

	\item In the dendroidal formalism, dendroidal un/straightening equivalences are constructed in the works of Heuts \cite{He:AOIO}, for dendroidal sets, and Boavida-Moerdijk \cite{BdBM:DSGSSBPQT}, for complete dendroidal Segal spaces. The equivalences are realized as (zig-zags of) Quillen equivalences between the \emph{covariant model structure} for dendroidal left fibrations and the projective model structure on simplicial algebras over (the Boardman-Vogt resolution of) a dendroidal $\infty$-operad.
	
	\noindent In contrast to this, we construct the operadic un/straightening functors as the composition of functors of $\infty$-categories, and using the envelope allows to reduce the discussion to symmetric monoidal $\infty$-categories. 
	
	\noindent The equivalence proven in \Cref{luppolo} between dendroidal and operadic left fibrations suggests looking for presentations of our operadic un/straightening equivalence as Quillen equivalences of model categories, either with the dendroidal formalism (see \cite{P:RDLF}) or with Lurie's formalism.
	
	\noindent Finally, we observe that by combining our operadic un/straightening equivalence with the dendroidal one we obtain an equivalence between the $\infty$-categories of algebras over a dendroidal $\infty$-operad and that of algebras for its Lurie equivalent.
	
	\end{enumerate}

\noindent The followings are remarks of a more speculative nature.

\begin{enumerate}
\setcounter{enumi}{2}

\item In \cite{K:MEGCDSO}, Kern proposes a construction of the symmetric monoidal envelope of dendroidal $\infty$-operads, uses it to define coCartesian fibrations of such and to prove a un/straightening equivalence for dendroidal $\infty$-operads. We can ask whether our strong monoidality condition for $\mathsf{sm}\Leftbeta_{cC^\otimes}$ corresponds to that of being \emph{equifibred} formulated in \cite{K:MEGCDSO}, in which case it would be reasonable to expect that Hinich-Moerdijk comparison functors induce an equivalence between Kern's dendroidal symmetric monoidal envelope and the one for Lurie $\infty$-operads. 
\item The fact that the symmetric monoidal envelope functor is fully faithful on slices has been first observed by Barkan, Haugseng and Steinebrunner in \cite{BHS:EAP}. In the same article, they characterize the essential image of $\env(-)^\otimes \colon \ell\Op_\infty\to{\mathsf{sm}\Cat_\infty}_{/\env(\mathsf{Com})^\otimes}$, but we do not make use of this result in our work. It would be interesting to compare our un/straightening of \Cref{mainnnn} with \cite[Theorem E]{BHS:EAP} and with \cite[Theorem 2.3.5.]{BHS:EAP}. In this respect, it could also be interesting to consider the work by Haugseng, resp. Haugseng and Kock, in \cite{H:IOVSS}, resp. \cite{HK:IOSMIC}.
\end{enumerate}

\subsection*{Outline} In \Cref{section2}, we set up notation and $\infty$-categorical conventions, recall Lurie and the dendroidal formalism for $\infty$-operads and the notion of left fibration of $\infty$-categories.

\noindent We devote \Cref{section3} to the definition of operadic left fibrations and the study of the $\infty$-category of such. In particular, in \Cref{SS:1:1} we define the notion of \emph{operadic left fibration} for a Lurie $\infty$-operad. In \Cref{SS:1:2}, we recall the definition of dendroidal left fibrations for dendroidal $\infty$-operads. After some recollections on Hinich-Moerdijk comparison functors, in \Cref{SS:1:3} we prove \Cref{luppolo}, that is, that they restrict to an equivalence between the $\infty$-category of dendroidal, resp. operadic, left fibrations, which is a result of independent interest.

\noindent  In  \Cref{SS:2:1}, we recall the monoidal structures on $\mathsf{Fun}(\cC,\cS)$ and $\Left_\cC$ when $\cC$ is the underlying $\infty$-category of a symmetric monoidal $\infty$-category $\cC^\otimes$, and we state the monoidal straightening-unstraightening equivalence. In \Cref{SS:2:2}, we prove \Cref{malaga}, where we characterize the $\infty$-category $\mathsf{sm}\Leftbeta_{\cC^\otimes}$ of strong sm-left fibrations, that is, the unstraightening of strong monoidal functors $\cC^\otimes \to \cS^\times$.

\noindent In \Cref{round1} we recall the definition of symmetric monoidal envelope, and set up some notation. In \Cref{ranianna}, we prove that strong sm-left fibrations are equivalent to operadic left fibrations, and we do this in two steps. First, in \Cref{pollo} we show that the symmetric monoidal envelope defines a fully faithful left adjoint from operadic left fibrations to sm-left fibrations, then in \Cref{kriku} we show that its essential image coincides with strong sm-left fibrations. In this last step, we make essential use of \Cref{luppolo}.

\noindent Finally, in \Cref{triku} we put everything together and prove \Cref{mainnnn}, the straightening-unstraightening equivalence for $\infty$-operads. 

\subsection*{Acknowledgments} 
I would like to thank Gijs Heuts for the insightful discussions from which this work originated. I want to thank Ieke Moerdijk for helpful remarks on \Cref{section3} and Hiro Lee Tanaka for valuable mathematical and expositary commentaries. I thank also Miguel Barata, Hugo Pourcelot, Bruno Ignacio Galvez Araneda, Victor Carmona Sanchez and Victor Saunier for many helpful discussions, and Jordan Levin for helpful remarks on an earlier version of this article. Finally, I would like to thank my advisor, Eric Hoffbeck, for his constant support and interest in my projects, and for carefully reading my (several) drafts.

\noindent Part of this work originated while visiting the University of Utrecht, in 2024, cofinanced by the Eole grant (RFN). My PhD is founded by the European Union’s Horizon 2020 research and innovation programme under the Marie Skłodowska-Curie grant agreement No 945332. 

\changelocaltocdepth{1}
\section{Recollections and conventions}\label{section2}
\subsection{Conventions on $\infty$-categories} We work in the setting of $\infty$-categories as developed in \cite{Lu:HTT}, a concise and accessible account of which can be found, for example, in \cite{G:SCIO}. Let us recall the main constructions and definitions we will use throughout this article, and set up the notation for next sections. 

\begin{itemize}  
	\item Given an $\infty$-category $\cC$ and objects $x,y$ of $\cC$, we write $\mathsf{Map}_\cC(x,y)$, or sometimes just $\mathsf{Map}(x,y)$, for the mapping space of arrows in $\cC$ from $x$ to $y$.
	\item Given two $\infty$-categories $\cC$, $\cD$, there is an $\infty$-category of functors $\mathsf{Fun}(\cC, \cD)$, satisfying the equivalence $\mathsf{Map}(\cX, \mathsf{Fun}(\cC,\cD))\simeq \mathsf{Map}(\cX \times \cC, \cD)$ for any $\infty$-category $\cX$.
	\item We write $\cS$ for the $\infty$-category of spaces, presented by the Kan-Quillen model structure on simplicial sets.
	\item Given an $\infty$-category $\cC$, we write $\mathsf{PSh}(\cC)$ for the $\infty$-category of presheaves on $\cC$, that is, $\mathsf{PSh}(\cC)\coloneqq \mathsf{Fun}(\cC^{\text{op}},\cS)$. We will still write $\mathsf{Fun}(\cC,\cS)$ instead of $\mathsf{PSh}(\cC^{\text{op}})$ to emphasize covariancy in $\cC$.
	\item We write $\Cat_\infty$ for the $\infty$-category of small $\infty$-categories. It can be realized as the full sub $\infty$-category of simplicial spaces $\mathsf{Psh}(\Delta)=\mathsf{Fun}(\Delta^\text{op},\cS)$ spanned complete Segal spaces, see \cite{R:AMHTHT}. 
	\item A functor of $\infty$-categories  $L\colon \cC \to \cD$ is a \emph{left Bousfield localization}, here called just \emph{localization}, if it has a fully faithful right adjoint $\cD \hookrightarrow \cC$. If $\cD\subseteq \cC$ is a full sub $\infty$-category, we say that $\cD$ is a localization of $\cC$ if the inclusion has a left adjoint. 
	\item We will work with slice $\infty$-categories of the form $\cC_{/x}$, or occasionally also $\cC_{x/}$, with $x$ an object of $\cC$. The objects of the \emph{over-$\infty$-category} $\cC_{/x}$ are pairs $(y,\alpha)$, where $y$ is an object of $\cC$ and $\alpha\colon y \to x$ a morphism in $\cC$, a morphism $ (y,\alpha)\to (z,\beta)$ in $\cC_{/x}$ is a $2$-morphism in $\cC$ of the form 
	\[
	\begin{tikzcd}
	y \arrow[rr] \arrow[rd, "\alpha"']& & z \arrow[dl, "\beta"] \\
		& x &
	\end{tikzcd}
	\]
	\noindent and more generally its $n$-morphisms are diagrams $$ \Delta^{n+1}\simeq \Delta^n \star \Delta^0 \longrightarrow \cC$$ 
	\noindent taking the cocone point into $x$. One can define dually the simplices of the under-$\infty$-category $\cC_{x/}$. 
\end{itemize}
\noindent By replacing the object $x\colon \Delta^0\to \cC$ by a more general diagram $f\colon \cD \to \cC$, as for example the one singled out by a $n$-simplex of $\cC$, $f\colon \Delta^n \to \cC$, one can similarly define the under-$\infty$-category $\cC_{f/}$.

\noindent We state the following important
\begin{prop}[{\cite[Lemma 2.3.4.]{HM:OELIODIO}}]
Let $\cC$ be an $\infty$-category and $X$ an object of $\cC$. The Yoneda embedding induces an equivalence of $\infty$-categories $$\mathsf{PSh}(\cC_{/x})\xlongrightarrow{\simeq}\mathsf{PSh}(\cC)_{/x}.$$
\end{prop}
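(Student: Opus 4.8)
The plan is to realize the claimed equivalence as an adjoint pair $L \dashv R$ whose unit and counit are controlled by the Yoneda lemma, exploiting that both sides are presheaf $\infty$-categories and hence generated under colimits by representables. Throughout, write $y\colon \cC \to \mathsf{PSh}(\cC)$ for the Yoneda embedding and $y(x)$ for the image of $x$, so that $\mathsf{PSh}(\cC)_{/x}$ denotes the slice $\mathsf{PSh}(\cC)_{/y(x)}$.

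First I would construct the comparison functor. The forgetful functor $\cC_{/x} \to \cC$, postcomposed with $y$ and with the structure maps, assembles into a functor $\cC_{/x} \to \mathsf{PSh}(\cC)_{/y(x)}$ sending $(c,\alpha\colon c\to x)$ to $(y(c)\xrightarrow{y(\alpha)} y(x))$. The target is cocomplete, since the forgetful functor $\mathsf{PSh}(\cC)_{/y(x)} \to \mathsf{PSh}(\cC)$ creates colimits; hence by the universal property of $\mathsf{PSh}(\cC_{/x})$ as the free cocompletion of $\cC_{/x}$, this functor extends uniquely to a colimit-preserving functor $L\colon \mathsf{PSh}(\cC_{/x}) \to \mathsf{PSh}(\cC)_{/y(x)}$, which is the functor induced by Yoneda in the statement.

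Next I would identify its right adjoint $R$ explicitly. By the general formula for the adjoint of a Yoneda extension, $R$ sends $(G\to y(x))$ to the presheaf on $\cC_{/x}$ whose value at $(c,\alpha)$ is $\mathsf{Map}_{\mathsf{PSh}(\cC)_{/y(x)}}((y(c)\xrightarrow{y(\alpha)} y(x)),\,(G\to y(x)))$, which by the Yoneda lemma and the description of mapping spaces in a slice is the fiber of $G(c)\to \mathsf{Map}_\cC(c,x)$ over the point $\alpha$. The key observation is that $R$ again preserves colimits: colimits in $\mathsf{PSh}(\cC)_{/y(x)}$ are computed objectwise in $\mathsf{PSh}(\cC)$, and taking the fiber over a fixed point of the \emph{constant} space $\mathsf{Map}_\cC(c,x)$ is colimit-preserving, because $\cS_{/K}\simeq \mathsf{Fun}(K,\cS)$ identifies the fiber functors with the evaluations at the points of $K$, which preserve colimits.

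Finally I would check that the unit and counit of $L\dashv R$ are equivalences; since $L$, $R$, and the identities all preserve colimits and representables generate both presheaf categories under colimits, it suffices to test on representables. On a representable $y_{\cC_{/x}}(c,\alpha)$ the composite $RL$ returns, at $(c',\alpha')$, the fiber of $\mathsf{Map}_\cC(c',c)\xrightarrow{\alpha\circ -}\mathsf{Map}_\cC(c',x)$ over $\alpha'$, which is exactly $\mathsf{Map}_{\cC_{/x}}((c',\alpha'),(c,\alpha))$, so the unit is an equivalence on representables and hence everywhere. For the counit, the objects $(y(c)\to y(x))$ in the image of $L$ generate $\mathsf{PSh}(\cC)_{/y(x)}$ under colimits, as every $G\to y(x)$ is the slice-colimit of its canonical diagram of representables; on these generators the triangle identities together with the invertibility of the unit force the counit to be an equivalence. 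I expect the only genuine subtlety — the main obstacle — to be the verification that $R$ preserves colimits, i.e. that passing to fibers interacts well with colimits; this is precisely where the constancy of the base $\mathsf{Map}_\cC(c,x)$, independent of $G$, is essential, and it is what makes the otherwise formal Yoneda bookkeeping go through.
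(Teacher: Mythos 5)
Your argument is correct, but note that the paper does not actually prove this statement: it is imported verbatim as \cite[Lemma 2.3.4]{HM:OELIODIO}, so there is no in-text proof to compare against. Your route is the standard "free cocompletion" one: extend $(c,\alpha)\mapsto (y(c)\to y(x))$ along Yoneda, identify the right adjoint $R$ via the slice mapping-space formula as $(G\to y(x))\mapsto \bigl((c,\alpha)\mapsto \mathrm{fib}_\alpha(G(c)\to \mathsf{Map}_\cC(c,x))\bigr)$, and check unit/counit on representables. All the steps hold: the target slice is cocomplete and its projection to $\mathsf{PSh}(\cC)$ preserves colimits; your key point that $R$ is itself colimit-preserving is right, and the justification via $\cS_{/K}\simeq \mathsf{Fun}(K,\cS)$ (valid since $K=\mathsf{Map}_\cC(c,x)$ is a space, independent of $G$) is exactly where the argument could otherwise break; the unit on a representable computes the slice mapping space of $\cC_{/x}$, and the counit on objects $(y(c)\to y(x))$ — all of which are in the essential image of $L$ by full faithfulness of $y$ — follows from the triangle identity, after which both are propagated along colimits of generators. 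The one alternative worth knowing is that this is also an instance of the general equivalence $\mathsf{PSh}(\cC)_{/F}\simeq \mathsf{PSh}(\cC_{/F})$ for the $\infty$-category of elements of an arbitrary presheaf $F$ (\cite[Corollary 5.1.6.12]{Lu:HTT}, or the left-fibration/covariant-model-structure proof), specialized to $F=y(x)$; your proof is more self-contained but proves only the representable case, which is all that is needed here.
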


\subsubsection{Adjunctions between slices}\label{adjslice}
Let $\adjunction{F}{\cC}{\cD}{G}$ be a pair of adjoint functors.

\noindent Given an object $y$ of $\cD$, there is an induced pair of adjoint functors between the slice $\infty$-categories $$ \adjunction{\tilde{F}}{{\cC}_{/G(y)}}{{\cD}_{/y}}{G},$$ where the right adjoint consists in applying $G$, while the left adjoint consists in applying $F$ and then postcomposing with the counit, that is, it the composite 
$$\cC_{/G(y)}\longrightarrow \cD_{/FG(y)} \xlongrightarrow{{\epsilon_y}_*} \cD_{/y}  .$$ 

\noindent If $\cC$ has pullbacks, after \cite[Proposition 5.2.5.1]{Lu:HTT} for any object $x$ of $\cC$ there is an induced pair of adjoint functors between the slice $\infty$-categories
$$ \adjunction{F}{\cC_{/x}}{\cD_{/F(x)}}{G'},$$ where the left adjoint consists in applying $F$, while the right adjoint is the base-change along the unit after having applied $G$, that is, it is the composite $$\cD_{/F(x)}\longrightarrow \cC_{/GF(x)} \xlongrightarrow{\eta_x^*} \cC_{/x}  .$$ 

\noindent 
\subsubsection{Left fibrations}
\noindent The main constructions we will work with are variations or generalization of left fibrations of $\infty$-categories. Let us recall how these latter are defined.

\begin{deff}
Let $p\colon \cC \to \cD$ be a functor of $\infty$-categories and $\alpha\colon X \to Y$ a morphism of $\cC$.  We say that $\alpha$ is \emph{$p$-coCartesian}, or that it is \emph{a $p$-coCartesian lift of $\gamma=p(\alpha)$} if the functor $$ \cC_{\alpha/}\longrightarrow \cC_{X/}\underset{\cD_{p(X)/}}{\times} \cD_{p(\alpha)/}$$ \noindent is an equivalence. The functor $p\colon \cC \to \cD$ is a \emph{coCartesian fibration} if every morphism in $\cD$ has a $p$-coCartesian lift.
\noindent A coCartesian fibration $p\colon \cC \to \cD$ is called a \emph{left fibration} if all morphisms in $\cC$ are $p$-coCartesian, or, equivalently, if the fibre $\cC_Y$ is an $\infty$-groupoid for any $Y$ in $\cD$.
\end{deff}

\noindent We write $\Left_\cC$ for the full sub $\infty$-category of ${\Cat_\infty}_{/\cC}$ spanned by left fibrations, and we denote by $(\cD, p)$, where $p\colon \cD \to \cC$, an object therein. A fundamental property of this $\infty$-category is that equivalences are characterized as fibrewise homotopy equivalences of spaces, as stated by the following

\begin{prop}{\cite[Proposition 1.10]{B:SOGC}}
Let $f\colon (\cD,\alpha)\to (\cE,\beta)$ be a morphism between left fibrations over $\cC$. Then $f$ is an equivalence in $\Left_\cC$ if and only if, for any $x\in \cC$, the induced map between the fibres $$ f_x\colon (\cD,\alpha)_x \longrightarrow (\cE,\beta)_x$$ is an equivalence of spaces. 
\end{prop}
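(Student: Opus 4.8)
The plan is to deduce the statement from the basic calculus of coCartesian edges, rather than from the full straightening equivalence. Since the forgetful functor $\Cat_{\infty/\cC}\to \Cat_\infty$ is conservative (a morphism in a slice $\infty$-category is an equivalence exactly when its image in the base is), it suffices to prove that the underlying functor $f\colon \cD\to \cE$ is an equivalence of $\infty$-categories, which I would verify through essential surjectivity and full faithfulness.

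The \emph{only if} direction is immediate: an equivalence over $\cC$ admits a homotopy inverse over $\cC$, and applying the fibre functor $(-)_x=(-)\times_\cC\{x\}$, which is functorial and preserves equivalences, exhibits $f_x$ as an equivalence. For the \emph{if} direction, assume every $f_x$ is an equivalence of spaces. Essential surjectivity is then easy: any $e\in\cE$ lies over $x=\beta(e)$, hence in $\cE_x$, and since $f_x\colon \cD_x\to\cE_x$ is essentially surjective there is $d\in\cD_x$ with $f(d)\simeq e$ in $\cE$.

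The substantive step is full faithfulness, and here I would use the fibrewise description of mapping spaces in a left fibration. For $d,d'\in\cD$ with $c=\alpha(d)$ and $c'=\alpha(d')$, the map $\alpha_*\colon \mathsf{Map}_\cD(d,d')\to \mathsf{Map}_\cC(c,c')$ is a Kan fibration whose fibre over $\gamma\colon c\to c'$ is $\mathsf{Map}_{\cD_{c'}}(\gamma_!d,\,d')$, where $\gamma_!d\in\cD_{c'}$ is the coCartesian pushforward of $d$ along $\gamma$; this rests on the fact that every morphism of $\cD$ is coCartesian. Because $f$ lies over $\cC$ we have $\beta f=\alpha$, so $f$ induces a commutative square with vertical maps $\alpha_*$ and $\beta_*$, top map $f$ on mapping spaces, and the identity of $\mathsf{Map}_\cC(c,c')$ on the bottom. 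As $\cE$ is again a left fibration, $f$ sends the coCartesian lift $d\to\gamma_!d$ to a coCartesian lift of $\gamma$ starting at $f(d)$, whence $f(\gamma_!d)\simeq\gamma_!(f d)$ in $\cE_{c'}$. Therefore the map induced on the fibre over $\gamma$ is the map
$$ \mathsf{Map}_{\cD_{c'}}(\gamma_!d,\,d')\longrightarrow \mathsf{Map}_{\cE_{c'}}\big(\gamma_!(f d),\,f d'\big) $$
coming from $f_{c'}$, which is an equivalence since $f_{c'}$ is. A map of Kan fibrations over a common base inducing equivalences on all fibres is an equivalence on total spaces, so $\mathsf{Map}_\cD(d,d')\to\mathsf{Map}_\cE(f d,f d')$ is an equivalence, giving full faithfulness.

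I expect the main obstacle to be the fibrewise mapping-space computation underlying full faithfulness: one has to set up the Kan fibration $\alpha_*$ and identify its fibres via coCartesian transport, which is where the defining property of left fibrations really enters. As an alternative one could invoke the straightening equivalence $\Left_\cC\simeq\mathsf{Fun}(\cC,\cS)$, under which $f$ corresponds to a natural transformation with value $f_x$ at $x$ and equivalences are detected objectwise; but this is a heavier input than the statement deserves, so I would prefer the direct argument above.
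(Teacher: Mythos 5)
The paper does not actually prove this proposition: it is imported wholesale from the literature (the citation to \cite[Proposition 1.10]{B:SOGC}), so any argument you give is necessarily ``a different route.'' Your proof is correct and self-contained. The reduction to the underlying functor via conservativity of $\Cat_{\infty/\cC}\to\Cat_\infty$ (together with fullness of $\Left_\cC$ in the slice) is right, the ``only if'' direction and essential surjectivity are routine as you say, and the full faithfulness step is the standard coCartesian-transport computation: in the model $\mathsf{Hom}^L_{\cD}(d,d')=\cD_{d/}\times_{\cD}\{d'\}$ the map $\alpha_*$ is a left fibration over the Kan complex $\mathsf{Map}_\cC(c,c')$, hence a Kan fibration, and the universal property of the coCartesian edge $d\to\gamma_! d$ identifies its fibre over $\gamma$ with $\mathsf{Map}_{\cD_{c'}}(\gamma_! d, d')$; your observation that $f$ carries coCartesian lifts to coCartesian lifts (automatic here, since \emph{every} edge of $\cE$ is $\beta$-coCartesian) makes the square of fibres commute, and the fibrewise criterion for maps of Kan fibrations over a fixed base finishes it, provided — as you do — one checks the fibre over every $\gamma$, including those in components of $\mathsf{Map}_\cC(c,c')$ not hit by $\mathsf{Map}_\cD(d,d')$. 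What this buys over the citation is an elementary argument using only the calculus of coCartesian edges; the alternative routes you would find in the literature either pass through the straightening equivalence (equivalences in $\mathsf{Fun}(\cC,\cS)$ are detected objectwise, and $\unst^\cC(F)_x\simeq F(x)$ as in \Cref{fibremanch}) or through the covariant model structure, both of which are heavier inputs, exactly as you remark.
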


\subsection{Lurie  $\infty$-operads} 
Let us recollect Lurie's formalism for $\infty$-operads.
\begin{itemize}
\item Denote by $\mathsf{Fin}_*$ the skeleton of the category of finite pointed sets. Its objects are the finite sets $\underline{n}_+ =\{\ast,1,\dots,n\}$, for $n\geq 0$, where $\ast$ is the base point, and a map $f\colon \underline{n}_+ \to \underline{m}_+$ is a map of sets preserving the base point. The map $f$ is called \emph{active} if $f^{-1}(\ast)=\{\ast\}$, while it is called \emph{inert} if $\#f^{-1}(i)=1$ for all $i\in \underline{m}_+\setminus \{\ast\}$. We will still denote by $\mathsf{Fin}_*$ the simplicial set given by its nerve.
\item For any $n\geq 0$, we denote by $\beta\colon \underline{n}_+\to \underline{1}_+$ the unique active map in $\mathsf{Fin}_*$ from $\underline{n}_+$ to $\underline{1}_+$.
\item For any $n\geq 0$ and $i\in \{1,\dots,n\}$, we denote by $\rho^i\colon \underline{n}_+\to \underline{1}_+$ the inert morphism sending $j\neq i$ to the base point $\ast$ and the element $i$ to $1$. We call $\rho_i$ the \emph{projection} on the $i$-th coordinate, where, when $n=0$, there is a unique morphism $\rho\colon \underline{0}_+\to \underline{1}_+$.
 \end{itemize}

\begin{deff}[{\cite[Definition 2.1.1.10 ]{Lu:HA}}]\label{defi1}
A \emph{Lurie $\infty$-operad} is an object $(\cO^\otimes,p)$ of ${\Cat_{\infty}}_{/\mathsf{Fin}_*}$ such that the morphism $p\colon \cO^\otimes \to \mathsf{Fin_*}$ has the following properties:
\begin{enumerate}
\item any inert morphism $\alpha$ of $\mathsf{Fin}_*$ admits a coCartesian lift $\alpha_!$ in $\cO^\otimes$. As a result, for any inert $\underline{n}_+\to \underline{m}_+$ there is an induced functor of fibres $\cO^\otimes_{\underline{n}_+}\to \cO^\otimes_{\underline{m}_+}$;
\item for every $n\geq 0$, the functor $\cO^\otimes_{\underline{n}_+}\to \prod_{i=1}^n \cO^\otimes_{\underline{1}_+}$ induced by the coCartesian lifts $\{\rho^i_!\}_{i=1}^n$ of the projections is an equivalence of spaces;
\item for any arrow $f\colon \underline{n}_+ \to \underline{m}_+$ and objects $x$ of $\cO^\otimes_{\underline{n}_+}$ and $y$ of $\cO^\otimes_{\underline{m}_+}$, denote by $\mathsf{Map}^f(x,y)$ the subspace of the mapping space $\mathsf{Map}_{\cO^\otimes}(x,y)$ spanned by those maps lying over $f$. Given $f$ and $y$ as above, for any $x$ in $\cO^\otimes_{\underline{n}_+}$, the map $$ \mathsf{Map}^f(x,y) \longrightarrow \prod_{i=1}^m \mathsf{Map}^{f\circ \rho^i}(x,y_i)$$  induced by the projections is an equivalence, where $y \to y_i$ is the coCartesian lift of $\rho^i$.
\end{enumerate}\end{deff}

\noindent We will denote a Lurie $\infty$-operad $(\cO^\otimes, p\colon \cO^\otimes \to \mathsf{Fin_*})$ by $\cO^\otimes$, leaving the map $p$ implicit. 

\noindent Observe that condition $(1)$ implies that $\cO^\otimes$ is an $\infty$-category. Given an $\infty$-operad $\cO^\otimes$, its \emph{underlying category} $\cO$ is the fibre of $\cO^\otimes$ over $\underline{1}_+$, $\cO\simeq \cO^\otimes_{\underline{1}_+}$.

\noindent By condition $(3)$ in \Cref{defi1}, any object $x \in \cO^\otimes_{\underline{n}_+}$ is equivalent to a $n$-uple $(x_1,\dots,x_n) \in \cO^{\times n},$ and for this reason we write $x\simeq x_1\oplus \dots \oplus x_n$. 
\noindent For objects $x_1,\dots,x_n, y$ of $\cO$, the space of multimorphisms $ (x_1,\dots,x_n)\to y$ is spanned by the coCartesian lifts of $\beta$ with domain $x_1\oplus\dots\oplus x_n$.

\noindent Given such an $\infty$-operad, we call \emph{inert} the arrows of $\cO^\otimes$ which are (equivalent to) coCartesian lifts of inert arrows of $\mathsf{Fin}_*$.

\noindent Lurie $\infty$-operads assemble into an $\infty$-category, as we now specify.

\begin{deff}
The $\infty$-category of Lurie $\infty$-operads $\ell\Op_\infty$ is the non full subcategory of ${\Cat_\infty}_{/\mathsf{Fin}_*}$ where 
\begin{itemize}
\item objects are Lurie $\infty$-operads $\cO^\otimes$;
\item a morphism $f\colon \cO^\otimes \to \cD^\otimes$ in ${\Cat_\infty}_{/\mathsf{Fin}_*}$ is in $\ell\Op_\infty$ if $f$ preserves inert arrows. 
\end{itemize}
\end{deff}

\begin{rmk}\label{luriesmodel}
Given a discrete colored operad $P$, we can consider its $1-$category of operators and the natural morphism of this into $\mathsf{Fin}_*$. By considering the nerve of this map one obtains a Lurie $\infty$-operad, denoted by $\cN(P)^\otimes$. In particular, one has $\cN(\mathsf{Com})^\otimes \simeq \mathsf{Fin}_*$.
\end{rmk}

\noindent One can define symmetric monoidal $\infty$-categories as the $\infty$-operads with \emph{representable} multimorphisms. We follow \cite[Definition 2.0.0.7]{Lu:HA}.

\begin{deff}\label{ihp}
A \emph{symmetric monoidal $\infty$-category} $\cC^\otimes$ is a Lurie $\infty$-operad $p\colon \cC^\otimes \to \mathsf{Fin_*}$ where $p$ is furthermore a coCartesian fibration (that is, all morphisms in $\mathsf{Fin}_*$ have coCartesian lifts).
\end{deff}

\noindent The $n$-fold tensor product of $\cC$ corresponds to coCartesian lifts $\beta_!$ of the unique active morphism $\beta\colon \underline{n}_+\to \underline{1}_+$. 

\begin{deff}
The $\infty$-category of symmetric monoidal $\infty$-categories $ \mathsf{smCat}_\infty$ is the non full subcategory of ${\Cat_\infty}_{/\mathsf{Fin}_*}$ where objects are symmetric monoidal $\infty$-categories, and where a morphism $\cC^\otimes \to \cV^\otimes$ over $\mathsf{Fin}_*$ is a morphism in $\mathsf{smCat}_\infty$ if it preserves all the coCartesian lifts.
\end{deff}

\begin{rmk}
The $\infty$-category $\mathsf{smCat}_\infty$ is a non full subcategory of $\ell\Op_\infty$. Given two symmetric monoidal $\infty$-categories $\cC^\otimes, \cV^\otimes$, morphisms $\cC^\otimes \to \cV^\otimes$ in $\ell\Op_\infty$ corresponds to \emph{lax} symmetric monoidal functors, while those in $\mathsf{sm}\Cat_\infty$ correspond to \emph{strong} monoidal functors.
\end{rmk}

\noindent  For symmetric monoidal $\infty$-categories $\cC^\otimes, \cV^\otimes$, let us write $\mathsf{Fun}^{\text{lax}}(\cC^\otimes,\cV^\otimes)$, resp. $\mathsf{Fun}^{\text{str}}(\cC^\otimes, \cV^\otimes)$, for the full subcategory of $\mathsf{Fun}_{{\Cat_\infty}_{/\mathsf{Fin}_*}}(\cC^\otimes, \cV^\otimes)$ spanned by lax, resp. strong, monoidal functor of symmetric monoidal $\infty$-categories.

\begin{rmk}
Any $\infty$-category $\cC$ that admits finite products has a canonical symmetric monoidal structure whose tensor product is the cartesian product (\cite[Proposition 2.4.1.5.]{Lu:HA}). In particular, this applies to the $\infty$-category of spaces $\cS$; we denote the corresponding symmetric monoidal $\infty$-category by $\cS^\times$.
\end{rmk}

\subsection{Dendroidal formalism: key concepts}\label{prelimdendr} We begin by recalling the definition of the category $\omg$ introduced in \cite{MW:DS} and discussed in detail in \cite[\S 3.2]{HeMo:SDHT}.
The objects of $\omg$ are non-planar trees $T$ with finite vertex set $V(T)$ and edge set $E(T)$, together with a specified edge, called the \emph{root} of $T$, which is attached to a single vertex. We say that $e\in E(T)$ is an \emph{inner edge} if it is both the input and output edge of two (necessarily distinct) vertices, while it is a \emph{leaf} if it is not the output edge of some vertex.

\noindent The category $\omg$ includes the object $\eta$ consisting of just one edge that is at the same time the root and the unique leaf. It also contains, for any $n\geq 0$, the $n$-corolla $C_n$, the essentially unique tree with a single vertex whose unique output edge is the root and who has precisely $n$ input edges attached.

\noindent Any tree $T$ yields a colored operad in sets $\Omega(T)$: its colors are the edges of $T$, and, for edges $e_1,\dots,e_n, e$, one has $ T(e_1,\dots,e_n;e)=\{\ast\}$ if there exists (and if it does it is unique) a subtree of $T$ with leaves $\{e_1,\dots,e_n\}$ and root $e$ and is empty otherwise; the operadic composition corresponds to grafting of subtrees.

\noindent By definition, a morphism of trees $S \to T$ in the dendroidal category $\omg$ is a morphism between the associated operads $\omg(S)\to\omg(T)$, so that $\omg$ is realized as a full sub-category of the category of discrete operads, via an embedding $ \omg \hookrightarrow \Op$. 

\noindent Alternatively, morphisms in $\omg$ are generated by four classes of morphisms:
\begin{itemize}
\item isomorphisms  $T\xrightarrow{\sim} T'$. Observe that, contrarily to $\Delta$, these can be non-trivial;
\item for every inner edge $e$ of $T$, we call \emph{elementary inner face} the map $\partial_eT\to T$ which comes from contracting $e$ in $T$ and identifying its extremal vertices. If a tree map $S \to T$ is obtained by contracting more than one inner edge, that is, as the composition of elementary inner faces, we just call it \emph{inner face}.
\item for every subtree $S$ of $T$, there is the \emph{external face} consisting of the inclusion $S\hookrightarrow T$;
\item  for every edge $e$ of $T$, there is a \emph{degeneracy} $\sigma_e T \to T$ which adds a unary vertex in the middle of $e$.
\end{itemize}

\noindent There is a fully faithful functor $i\colon \Delta\hookrightarrow \omg$, realized by sending the linear order $[n]$ to the essentially unique tree with $n+1$ edges and $n$ vertices, all of valence $1$. We identify $\Delta$ with a full subcategory of $\omg$, and under this identification faces and degeneracies have the usual meaning.

\begin{deff}\label{dendrseg1}
The $\infty$-category $\mathsf{D}\Op_\infty$ of \emph{dendroidal $\infty$-operads} is the full sub $\infty$-category of the $\infty$-category $\mathsf{PSh}(\omg)$ spanned by the presheaves satisfying the Segal and completeness properties:
\begin{enumerate}
\item  For an inner edge $e$ of $T$, calling $T_e$, resp. $T^e$, the upper part, resp. the lower part, of $T$ obtained by cutting $T$ at $e$. Then $$ X(T) \longrightarrow X(T_e) \times_{X(e)}X(T^e) $$ is an equivalence.
\item Completeness: $i^* X \in  \mathsf{PSh}(\Delta)$ is a complete Segal space.
\end{enumerate}
\end{deff}

\noindent The $\infty$-category $\mathsf{D}\Op_\infty$ is equivalent to the one presented by the model category of dendroidal complete Segal spaces, or the equivalent one of dendroidal sets (see \cite{CM:DSMHO}, \cite{HeMo:SDHT}).

\begin{rmk}
If $X\simeq i_! M$ for $M \in \mathsf{PSh}(\Delta)$, then $X$ is a dendroidal $\infty$-operad if and only if it a $\infty$-category, here in the sense of complete Segal space.
\end{rmk}

\noindent We denote by $\phi$ the category of \emph{forests}, obtained from the tree category $\omg$ by formally adjoining finite coproducts. Explicitly, the category $\phi$ is the full sub-category of the category of discrete operads spanned by $\Omega(F)$, where $F=\bigsqcup_{i=1}^n T_i$ is a finite disjoint union of trees and $\Omega(F)$ is the disjoint union of the operads $\Omega(T_i)$. We can use the category $\phi$ to reformulate \Cref{dendrseg1} in the following equivalent
\begin{deff}
The $\infty$-category $\mathsf{D}\Op_\infty$ is the full sub $\infty$-category of the $\infty$-category $\mathsf{PSh}(\phi)$ spanned by the presheaves satisfying the Segal and completeness properties, as well as the extra (also Segal-type) property:
\end{deff}
\begin{enumerate}\setcounter{enumi}{2}
\item The natural map $X(F)\to \prod_{i=1}^n X(T_i)$ for a forest $F$ consisting of the trees $T_i$ is an equivalence. In particular, $X(\emptyset)$ is contractible. 
\end{enumerate}

\noindent The Yoneda embedding induces fully faithful inclusions $$ \omg\hookrightarrow  \phi\hookrightarrow \mathsf{DOp}_\infty.$$

\begin{theorem}[\cite{CM:DSSIO}]
	The inclusion $\mathsf{D}\Op_\infty\hookrightarrow \mathsf{PSh}(\phi)$ has a left adjoint. In other words, the $\infty$-category of dendroidal $\infty$-operads $\mathsf{D}\Op_\infty$ is a localization of that of presheaves on $\phi$.
\end{theorem}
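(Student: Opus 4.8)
The plan is to realize $\mathsf{D}\Op_\infty$ as the full sub $\infty$-category of $S$-local objects for an explicit small set $S$ of morphisms in $\mathsf{PSh}(\phi)$, and then to invoke the general theory of accessible localizations. Since $\phi$ is essentially small, $\mathsf{PSh}(\phi)=\mathsf{Fun}(\phi^{\text{op}},\cS)$ is presentable, so by \cite[Proposition 5.5.4.15]{Lu:HTT} the full sub $\infty$-category spanned by the $S$-local objects is an accessible localization of $\mathsf{PSh}(\phi)$; in particular, its inclusion admits a left adjoint. It therefore suffices to exhibit the three defining conditions of $\mathsf{D}\Op_\infty$ as locality with respect to maps between finite colimits of representables, and to check that these maps form a set.

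First I would treat the Segal conditions. For a tree $T$ with inner edge $e$, write $T_e$ and $T^e$ for the two pieces obtained by cutting $T$ at $e$, regarded via $\omg\hookrightarrow\phi$ as representables in $\mathsf{PSh}(\phi)$. The evident inclusions assemble into a map $s_{T,e}\colon T_e\sqcup_{e}T^e\to T$, the pushout being computed in the presheaf category. Since mapping out of a pushout yields a pullback and, by the Yoneda lemma, $\mathsf{Map}(T,X)\simeq X(T)$, a presheaf $X$ is $s_{T,e}$-local exactly when $X(T)\to X(T_e)\times_{X(e)}X(T^e)$ is an equivalence. The forest-decomposition condition is handled in the same way: for $F=\bigsqcup_i T_i$ the canonical map $\bigsqcup_i T_i\to F$ of representables detects precisely the equivalences $X(F)\xrightarrow{\sim}\prod_i X(T_i)$, the empty forest yielding the contractibility of $X(\emptyset)$. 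Ranging over all trees, inner edges, and forests produces a set of maps, since $\phi$ is essentially small.

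The completeness condition is the most delicate point, as it constrains the restriction $i^*X$ along $i\colon\Delta\hookrightarrow\omg$ rather than $X$ itself. Here I would use Rezk's theorem that complete Segal spaces form a localization of $\mathsf{PSh}(\Delta)$: concretely, $i^*X$ is a complete Segal space if and only if it is $T_0$-local for a fixed small set $T_0$ of maps in $\mathsf{PSh}(\Delta)$, namely the spine inclusions together with the completeness map $\Delta^0\to E$ out of the nerve of the walking isomorphism. Writing $j\colon\omg\hookrightarrow\phi$ and composing the left Kan extensions, the adjunction $(j_!i_!)\dashv(i^*j^*)$ gives, for any map $f$ in $\mathsf{PSh}(\Delta)$, that $X$ is $(j_!i_!f)$-local if and only if $i^*j^*X=i^*X$ is $f$-local. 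Transferring $T_0$ into $\mathsf{PSh}(\phi)$ along $j_!i_!$ and adjoining the result to the Segal and forest maps yields the desired small set $S$, whose local objects are exactly $\mathsf{D}\Op_\infty$; the theorem then follows formally from presentability of $\mathsf{PSh}(\phi)$.

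The step I expect to be the main obstacle is this completeness transfer: one must verify carefully that restriction along the composite $\Delta\hookrightarrow\omg\hookrightarrow\phi$ is computed by $i^*j^*$, that it is right adjoint to $j_!i_!$, and that the transferred maps cut out exactly the completeness of $i^*X$ rather than some weaker condition. The remaining conditions are manifestly locality conditions against maps of representables, and essential smallness of $\phi$ guarantees that $S$ is a set, so no further work is needed to conclude.
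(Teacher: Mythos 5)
The paper does not prove this statement---it is quoted from \cite{CM:DSSIO}---so there is no internal proof to compare against; your argument is the standard one underlying that reference (left Bousfield localization of presheaves on $\phi$ at the Segal, forest-decomposition and completeness maps), recast $\infty$-categorically via \cite[Proposition 5.5.4.15]{Lu:HTT}, and it is correct. The one point worth making explicit in your completeness step is that locality against $j_!i_!(\Delta^0\to E)$ only recovers Rezk completeness of $i^*X$ once $i^*X$ is already a Segal space; this is automatic here because the dendroidal Segal maps for linear trees restrict to the simplicial spine inclusions, so the combined set $S$ does cut out exactly $\mathsf{D}\Op_\infty$. (Also, $E$ is an infinite colimit of representables rather than a finite one, but this is irrelevant for the localization machinery, which only needs $S$ to be a small set.)
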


\subsection{Algebras over Lurie $\infty$-operads} Given a Lurie $\infty$-operad $\cO^\otimes$ and a symmetric monoidal $\infty$-category $\cV^\otimes$, we can talk about \emph{$\cO$-algebras in $\cV$}. We follow \cite[Definition 2.1.3.1]{Lu:HA}.

\begin{deff}
Let $\cO^\otimes$ be a $\infty$-operad and $\cV^\otimes$ be a symmetric monoidal $\infty$-category. A \emph{$\cO$-algebra in $\cV$} is a morphism of $\infty$-operads $\cO^\otimes \to \cV^\otimes$.

\noindent The $\infty$-category $\alg_{\cO^\otimes}(\cV^\otimes)$ of $\cO^\otimes$-algebras in $\cV^\otimes$ is the full subcategory of $\mathsf{Fun}_{{\Cat_\infty}_{/\mathsf{Fin}_*}}(\cO^\otimes, \cV^\otimes)$ spanned by morphisms of Lurie $\infty$-operads. 
\end{deff}

\noindent Observe that, if $\cO^\otimes \simeq \cC^\otimes$ is a symmetric monoidal $\infty$-category, then $\alg_{\cC^\otimes}(\cV^\otimes)$ is the $\infty$-category of lax monoidal functors, $\mathsf{Fun}^{\text{lax}}(\cC^\otimes,\cV^\otimes)$.

\noindent We will refer to $\mathsf{C}\alg(\cV)\coloneqq \alg_{\mathsf{Comm}^\otimes}(\cV)$ as \emph{commutative algebras in $\cV$.}

\begin{rmk}
	The notion of algebra over a dendroidal $\infty$-operad is not required for the purposes of this article, so we omit it here. For further details, the reader is referred to \cite{HeMo:SDHT}.
\end{rmk}
%
%
%

\section{The $\infty$-category of operadic left fibrations}\label{section3}

\noindent Given an $\infty$-operad $X$, whether $X$ is a dendroidal or a Lurie $\infty$-operad there are natural candidates for the $\infty$-category of operadic left fibrations over $X$. In this section, we recall the already existing notion of dendroidal left fibration for dendroidal $\infty$-operad, we define \emph{operadic left fibrations} for Lurie $\infty$-operads, and prove that they determine equivalent $\infty$-categories.

\noindent Before starting, let us recall the notion of being local with respect to a set of morphism.

\begin{deff}
	Let $\cC$ be an $\infty$-category and $x$ an object of $\cC$. Given a set $S$ of arrows of $\cC$, one says that $x$ is \emph{$S$-local} if, for any arrow $f\colon a \to b$ in $S$, the morphism $$ \mathsf{Map}_{\cC}(b,x)\xlongrightarrow{f*}  \mathsf{Map}_{\cC}(a,x)$$ is an equivalence of spaces. 
	
	\noindent Given an object $(y,f)$ in the slice $\cC_{/x}$, we say that $(y,f)$ is \emph{$S$-local} if it is $S_{/x}$-local, where $S_{/x}$ consists of those arrows in $\cC_{/x}$ of the form
	
	\begin{center}
		\begin{tikzcd}
			a \arrow[rr, "s"] \arrow[rd, "\alpha"']& & b	\arrow[ld, "\beta"] \\
			& x ,&
		\end{tikzcd}
	\end{center}
	\noindent where $s$ ranges in $S$ and $\beta$ in the arrows of $\cC$.
\end{deff}

\subsection{The $\infty$-category of dendroidal left fibrations}\label{SS:1:2}
We consider dendroidal left fibrations as first defined in \cite{BdBM:DSGSSBPQT}, where they are called \emph{covariant fibrations}; since we are working with the formalism of $\infty$-categories rather than with that of Quillen model categories, we will formulate the definitions in this language. 

\noindent Recall that, as explained in \Cref{prelimdendr}, we can realize dendroidal $\infty$-operads as a localization of the presheaf $\infty$-category $\mathsf{PSh}(\phi)$, where $\phi$ is the category of forests, obtained by freely adjoining finite coproducts to $\omg$. 
\begin{deff} For a tree $T\in \omg$, let $\ell(T)$ be the disjoint union of its leaves, and denote by $\ell(T)\hookrightarrow T$ the inclusion of these into $T$. For a forest $F$ in $\phi$, $F=T_1\sqcup \dots \sqcup T_n$, we write $\ell(F)$ for the disjoint union of the leaves of each tree in $F$, that is, $\ell(F)=\ell(T_1)\sqcup \dots \sqcup \ell(T_n)$, and $\ell(F)\hookrightarrow F$ for the inclusions of these into $F$.
\end{deff}

\noindent Let $\cL$ denote the set of morphisms in $\mathsf{PSh}(\phi)$ given by the inclusions of leaves of a forest,  $$ \cL\coloneqq \{\ell(F)\hookrightarrow F\}_{F\in \phi}$$
\begin{deff}
A morphism $f\colon Y \to X $ is a \emph{dendroidal left fibration} if $f$ is $\cL$-local in ${\mathsf{D}\Op_\infty}_{/X}$.

\noindent We write $\mathsf{D}\Left_X$ for the full sub $\infty$-category of ${\mathsf{D}\Op_\infty}_{/X}$ spanned by dendroidal left fibrations.
\end{deff}

\begin{rmk}\label{last}
The $\infty$-category $\mathsf{D}\Left_X$ is a localization of the over-category ${\mathsf{D}\Op_\infty}_{/X}$, as proven in \cite[Theorem 13.6]{HeMo:SDHT}. Moreover, it is a consequence of \cite[Lemma 13.5]{HeMo:SDHT} that, when $X=\cC$ is an $\infty$-category, there is a canonical equivalence $$\mathsf{D}\Left_\cC\simeq \Left_\cC.$$
Under the embedding $\Delta\hookrightarrow \phi$, the leaf inclusion $\ell([n])\hookrightarrow [n]$ corresponds to the map ${\{0\}\colon [0]\to [n]}$ which selects the first vertex. We recover that a map of $\infty$-categories $\cD\to \cC$ is a left fibration if and only if it is $\{\Delta^0\xhookrightarrow{\{0\}}\Delta^n\}_{n\geq 0}$-local.
\end{rmk}

\subsection{Operadic left fibrations}\label{SS:1:1} We now define the notion of left fibration of $\infty$-operads in Lurie formalism.

\noindent Denote by $U\colon \ell\Op_\infty \to \Cat_\infty$ the forgetful functor which, given an $\infty$-operad $\cO^\otimes \to \mathsf{Fin}_*$, simply returns the $\infty$-category $\cO^\otimes$, forgetting the morphism into $\mathsf{Fin}_*$.

\begin{deff}\label{opdleft}
	An \emph{operadic left fibration} is a morphism of Lurie $\infty$-operads $\cO^\otimes 	\to \cD^\otimes $ such that $Uf$ is a left fibration between $\infty$-categories.
\end{deff}

\begin{rmk}\label{crefs} The notion of operadic left fibration is equivalent to that of \emph{$\cO$-monoidal left fibration} defined in \cite[Definition 1.11]{R:MGCIC}. It differs from \cite[Definition 1.8]{KK:IOFEC}, in that instead of the condition of being a left (rather right) fibration being imposed on $\cO^\otimes \to \cD^\otimes$, it is imposed on the induced map of $\infty$-categories $\env(\cO)\to \env(\cD)$.
\end{rmk}

\noindent Given two $\infty$-operads $\cO^\otimes$, $\cD^\otimes$ and a left fibration $p\colon \cO^\otimes \to \cD^\otimes$ in ${\Cat_\infty}_{/\mathsf{Fin}_*}$, we can ask when $p$ is a morphism in $\ell\Op_\infty$. The following proposition shows that the condition to check is a Segal condition.

\begin{prop}[{\cite[Proposition 2.1.2.12]{Lu:HA}}]\label{amsterdam}
	Let $\cO^\otimes$, $\cD^\otimes$ be $\infty$-operads, and consider a morphism $p\colon \cD^\otimes \to \cO^\otimes$ in ${\Cat_\infty}_{/\mathsf{Fin}_*}$. Suppose that $p$ is a left fibration. The following properties are equivalent:
	\begin{itemize}
		\item $p$ is an operadic fibration;
		\item for any $x\in \cO^\otimes_{\underline{n}_+}$, $x\simeq x_1\oplus\dots\oplus x_n$, the inert maps $x\to x_i$ induce an equivalence of $\infty$-categories $$ \cD^\otimes_x \longrightarrow \prod_{i=1}^n \cD_{x_i}.$$
	\end{itemize}
\end{prop}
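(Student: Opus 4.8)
The plan is to recast both conditions as statements about coCartesian transport along the projections $\rho^i$, and then to pass between the fibrewise formulation (the second condition) and the global one (that $p$ is an operadic fibration) using the criterion that a map of left fibrations is an equivalence exactly when it is a fibrewise equivalence (\cite[Proposition 1.10]{B:SOGC}). Write $p_\cO\colon\cO^\otimes\to\mathsf{Fin}_*$ and $q\coloneqq p_\cO\circ p$. I would fix $n$ and work over the fibre $\cO^\otimes_{\underline n_+}$: since the pullback of the left fibration $p$ along $\cO^\otimes_{\underline n_+}\hookrightarrow\cO^\otimes$ is the fibre $\cD^\otimes_{\underline n_+}$, the restriction $\cD^\otimes_{\underline n_+}\to\cO^\otimes_{\underline n_+}$ is again a left fibration, and condition (2) of \Cref{defi1}, applied to $\cO^\otimes$ and to $\cD^\otimes$, supplies the Segal equivalences $\cO^\otimes_{\underline n_+}\xrightarrow{\simeq}\cO^{\times n}$ and $\cD^\otimes_{\underline n_+}\xrightarrow{\simeq}\cD^{\times n}$ induced by the coCartesian lifts of the $\rho^i$.

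For $d\in\cD^\otimes_{\underline n_+}$ over $x\simeq x_1\oplus\dots\oplus x_n$, coCartesian transport along the inert arrows $x\to x_i$ (available since $p$ is a left fibration) produces objects $d_i\in\cD_{x_i}$, and these assemble into a map of left fibrations over $\cO^\otimes_{\underline n_+}$
\[
\chi\colon\cD^\otimes_{\underline n_+}\longrightarrow\cO^\otimes_{\underline n_+}\times_{\cO^{\times n}}\cD^{\times n}
\]
whose fibre over $x$ is precisely the comparison $\cD^\otimes_x\to\prod_{i=1}^n\cD_{x_i}$. By \cite[Proposition 1.10]{B:SOGC}, the second condition is then equivalent to $\chi$ being an equivalence for all $n$. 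The bridge to the inert structure is that, since each $x\to x_i$ is $p_\cO$-coCartesian and its chosen lift $d\to d_i$ is $p$-coCartesian, the composition and cancellation behaviour of coCartesian arrows over the tower $\cD^\otimes\to\cO^\otimes\to\mathsf{Fin}_*$ (\cite[Proposition 2.4.1.3]{Lu:HTT}) forces $d\to d_i$ to be $q$-coCartesian; hence $d_i$ is the inert transport of $d$ internal to $\cD^\otimes$ and $\chi$ is identified over the base with the Segal equivalence $\cD^\otimes_{\underline n_+}\xrightarrow{\simeq}\cD^{\times n}$.

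From here both implications follow. If $p$ preserves inert arrows then the inert transport of $d$ lands over $x_i=\rho^i_!(x)$, so restricting the Segal equivalence of $\cD^\otimes$ to the fibre over $x$ returns $\cD^\otimes_x\xrightarrow{\simeq}\prod_i\cD_{x_i}$; conversely, the same cancellation statement shows that any inert arrow $f$ of $\cD^\otimes$---being $p$-coCartesian, as all arrows are, and $q$-coCartesian by definition---has $p(f)$ inert, so $p$ preserves inert arrows, while the fibrewise condition, read through \cite[Proposition 1.10]{B:SOGC}, records the compatibility of the two Segal decompositions. The step I expect to be the main obstacle is exactly this identification of transports: reconciling the inert ($q$-coCartesian) transport internal to $\cD^\otimes$ with the $p$-coCartesian transport along the inert arrows of $\cO^\otimes$, and checking that it lands over the correct fibre $x_i$. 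Once this bookkeeping of coCartesian lifts over the two-step fibration is settled, \cite[Proposition 1.10]{B:SOGC} converts the fibrewise condition into the global statement essentially formally.
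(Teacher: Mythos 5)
The paper offers no proof of \Cref{amsterdam}: it is cited from \cite[Proposition 2.1.2.12]{Lu:HA} without argument, so there is nothing internal to compare against and your proposal has to stand on its own. Your strategy --- restrict to the fibre over $\underline{n}_+$, assemble the fibrewise comparison maps into a morphism $\chi$ of left fibrations over $\cO^\otimes_{\underline{n}_+}$, identify the $p$-coCartesian transport along the inerts $x\to x_i$ with the inert transport internal to $\cD^\otimes$, and convert between the global and fibrewise statements with \cite[Proposition 1.10]{B:SOGC} --- is sound, and the forward identification is a correct direct application of \cite[Proposition 2.4.1.3]{Lu:HTT}: $p(d\to d_i)=(x\to x_i)$ is $p_\cO$-coCartesian and $d\to d_i$ is $p$-coCartesian, hence $q$-coCartesian; the square relating the two Segal maps then commutes, and $\chi$ is an equivalence by two-out-of-three against the Segal equivalences of $\cD^\otimes$ and $\cO^\otimes$.

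The one step not covered by the result you cite is the converse cancellation. \cite[Proposition 2.4.1.3]{Lu:HTT} says that \emph{if} $p(f)$ is $p_\cO$-coCartesian \emph{then} $f$ is $p$-coCartesian iff it is $q$-coCartesian; it does not say that $f$ being both $p$- and $q$-coCartesian forces $p(f)$ to be $p_\cO$-coCartesian. To get that, take the $p_\cO$-coCartesian lift $\tilde\gamma$ of $q(f)$ at $p(d)$ and a $p$-coCartesian lift $g$ of $\tilde\gamma$ at $d$; the cited proposition makes $g$ a $q$-coCartesian lift of $q(f)$ at $d$, so $f\simeq g$ by essential uniqueness of coCartesian lifts and $p(f)\simeq\tilde\gamma$ is inert. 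With this patch the argument is complete. Note, however, what it then proves: neither direction of your argument uses the other condition as a hypothesis, so both conditions hold automatically whenever $\cD^\otimes$ is an $\infty$-operad and $p$ is a left fibration over $\mathsf{Fin}_*$. That is consistent with the statement as phrased in \Cref{amsterdam}, but it signals that this formulation is a degenerate form of \cite[Proposition 2.1.2.12]{Lu:HA}, where the source is \emph{not} assumed to be an $\infty$-operad in advance and condition (2) is precisely what certifies that it is one; the substantive content your proof establishes, and the only one the paper uses downstream, is the implication from ``operadic left fibration'' to the fibrewise Segal condition.
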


\noindent Given an $\infty$-operad $\cO^\otimes$, we denote by $\Leftlax_{\cO^\otimes}$ the full subcategory of ${\ell\Op_\infty}_{/\cO^\otimes}$ spanned by operadic left fibrations. 

\begin{rmk} After \Cref{last}, the $\infty$-category $\Leftlax_{\cO^\otimes}$ can be described as the full sub $\infty$-category of ${\ell\Op_\infty}_{/\cO^\otimes}$ spanned by the objects $(\cD^\otimes, f)$ which are $\{\Delta^0 \xrightarrow{\{0\}} \Delta^n\}_{n\geq 0}$-local after having applied the forgetful functor $\ell\Op_\infty \to {\Cat_\infty}_{/\cO^\otimes}$.
\end{rmk}

\subsection{Equivalence between dendroidal and operadic left fibrations}\label{SS:1:3}\label{unsure}

Let us start by recalling Hinich-Moerdijk comparison functors. To this end, let us recall that
\begin{itemize}
	\item the $\infty$-category $\mathsf{D}\Op_\infty$ is a localization of the presheaf $\infty$-category $\mathsf{PSh}(\phi)$, which in particular means it is equivalent to a full sub-$\infty$-category of $\mathsf{PSh}(\phi)$;
	\item the $\infty$-category $\ell\Op_\infty$ is a non-full sub $\infty$-category of ${\Cat_\infty}_{/\mathsf{Fin}_*}$, and this latter is a localization of ${\mathsf{PSh}(\Delta)}_{\mathsf{Fin}_*}\simeq \mathsf{PSh}(\mathbb{F})$, where $\mathbb{F}\coloneqq \Delta_{/\mathsf{Fin}_*}$ is the category of elements of $\mathsf{Fin}_*$, whose objects are the simplices of $\mathsf{Fin}_*$.
\end{itemize}

\noindent In \cite{HM:OELIODIO}, the authors construct a functor $$ w\colon \mathbb{F}\longrightarrow \phi,$$
\noindent based on the fact that every $n$-simplex of $\mathsf{Fin}_*$ functorially determines a $n$-leveled forest, hence a forest by forgetting the levels. Let us illustrate by some examples to build intuition about this.

\begin{itemize}
\item  A $0$-simplex of $\mathsf{Fin}_*$, that is, an element of $\mathsf{Fin}_*$, is given by the pointed set $\underline{s}_+$ for some $s\geq 0$, and we can identify it with $s$ copies of the trivial tree $\eta$.
\item  A $1$-simplex $\alpha\colon \Delta^1\to \mathsf{Fin}_*$ corresponds to a disjoint union of corollas, whose input edges are given by the set of edges given by $\alpha(0)$ and the output edges are given by $\alpha(1)$. More precisely, let $\alpha\colon \underline{m}_+\to  \underline{n}_+$ be the morphism in $\mathsf{Fin}_*$ determined by the $1$-simplex. The morphism $\alpha$ admits a essentially unique factorization as $\alpha=\alpha'\circ \alpha''$, where $\alpha'\colon \underline{k}_+\to \underline{n}_+$ is active and $\alpha''\colon \underline{m}_+\to \underline{k}_+$ is inert, and without loss of generality, we can assume that $\alpha''(i)= i$ if $i\leq k$, and $\alpha''(i)=\ast$ when $i\geq k+1$, and that $\alpha'(i)=\alpha(i)$ for all $i\leq k$. This means that the shape of the forest determined by $\alpha$ consists in $k$ corollas and $m-k$ copies of $\eta$ (i.e. the edge without any vertex).
	\[
	\begin{tikzcd}
		&                            & {} \arrow[rd, no head] & {}                                                                & {} & {} \arrow[d, no head]      & {} \arrow[d, no head] & {} \arrow[d, no head] &    \\
		{} \arrow[rrrrrrrr, no head, dashed] & \bullet \arrow[d, no head] &                        & \bullet \arrow[u, no head] \arrow[ru, no head] \arrow[d, no head] &    & \bullet \arrow[d, no head] & {}                    & {}                    & {} \\
		& {}                         &                        & {}                                                                &    & {}                         &                       &                       &   
	\end{tikzcd}
	\]
	
	\vspace*{-.3cm}
	\captionof{figure}{The forest of corollas determined by the $1$-simplex $\alpha\colon \underline{6}_+\to \underline{3}_+$ with $\alpha(1)=\alpha(2)=\alpha(3)=2$, $\alpha(4)=3$, $\alpha(5)=\alpha(6)=\ast $}
	
\item More generally, a $n$-simplex $\Delta^n \to \mathsf{Fin}_*$ consists in \emph{a $n$-leveled forest}, where edges are decorated by objects of $\cP$ and vertices by operations in $\cP^\otimes$, and where being $n$-leveled means that there is at least a tree in the forest which has a maximal branch of length $n+1$ (equivalently, a maximal chain of vertices of length $n$).
	
	\[
	\begin{tikzcd}
		{} \arrow[rd, no head]             &                                                 & {}                                             &    & {} \arrow[rd, no head] &                                                & {} \\
		{} \arrow[rrrrrr, no head, dashed] & \bullet \arrow[ru, no head] \arrow[rd, no head] &                                                & {} &                        & \bullet \arrow[ru, no head] \arrow[d, no head] & {} \\
		{} \arrow[rrrrrr, no head, dashed] &                                                 & \bullet \arrow[ru, no head] \arrow[d, no head] &    &                        & {}                                             & {} \\
		&                                                 & {}                                             &    &                        &                                                &   
	\end{tikzcd}
	\]

	\vspace*{-.3cm}
	\captionof{figure}{The forest determined by the $2$-simplex $\underline{4}_+\stackrel{\alpha_1}{\to}\underline{3}_+\stackrel{\alpha_2}{\to}\underline{1}_+$
		with $\alpha_1(1)=1=\alpha_1(2)$, $\alpha_1(3)=3=\alpha_1(4)$,
		$\alpha_2(1)=1=\alpha_2(2)$, $\alpha_2(3)=*$.
	}
	
\end{itemize}

\noindent By left Kan extension, the functor $w$ induces an adjunction $$\adjunction{w_!}{\mathsf{PSh}(\mathbb{F})}{\mathsf{PSh}(\phi)}{w^*}.$$
\noindent 

\begin{theorem}{\cite[Theorem 3.1.4]{HM:OELIODIO}}\label{victor}
	\begin{enumerate}
		\item The restriction of $w^*$  to $ \mathsf{D}\Op_\infty\subseteq\mathsf{PSh}(\phi)$ defines a functor $$\lambda\colon \mathsf{D}\Op_\infty\to \ell\Op_\infty.$$
		\item  Let  $i\colon \phi\hookrightarrow \ell\Op_\infty$ be the inclusion given by identifying $\phi$ with a full subcategory of discrete operads, and let $\delta'\colon \ell\Op_\infty \to \mathsf{PSh}(\phi)$ be the functor corresponding, under adjunction, to the functor $\mathsf{Map}_{\ell\Op_\infty}(-,i(-))\colon \ell\Op_\infty \times \phi^{\text{op}}\to \cS$. Then $\delta'$ defines a functor $$\delta\colon\ell\Op_\infty \to  \mathsf{DOp}_\infty.$$ 
		\item The above functors are part of an adjunction $$\adjunction{\delta}{\ell\Op_\infty}{\mathsf{D}\Op_\infty}{\lambda},$$ which is an equivalence of $\infty$-categories.
	\end{enumerate}

\end{theorem}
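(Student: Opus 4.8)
The plan is to exploit that both $\ell\Op_\infty$ and $\mathsf{D}\Op_\infty$ are accessible localizations of presheaf $\infty$-categories: $\mathsf{D}\Op_\infty$ sits inside $\mathsf{PSh}(\phi)$, while $\ell\Op_\infty$ is obtained from $\mathsf{PSh}(\mathbb{F})\simeq {\mathsf{PSh}(\Delta)}_{/\mathsf{Fin}_*}$ by first localizing onto ${\Cat_\infty}_{/\mathsf{Fin}_*}$ and then restricting to the inert-preserving maps. The combinatorial functor $w\colon\mathbb{F}\to\phi$ induces $w_!\dashv w^*$ on presheaves, and the strategy is to descend this adjunction to the two localizations, obtaining $\delta\dashv\lambda$, and then to prove that the descended adjunction is an adjoint equivalence by showing that the right adjoint $\lambda$ is conservative while the left adjoint $\delta$ is fully faithful. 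By the triangle identities these two facts suffice: from $\lambda\epsilon\circ\eta\lambda=\id$ and the unit $\eta$ being an equivalence one deduces that $\lambda\epsilon$ is an equivalence, whence $\epsilon$ is by conservativity of $\lambda$, so both unit and counit are equivalences.

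For part (1) I would compute $w^*X$ for $X\in\mathsf{D}\Op_\infty$ directly: its value on an $n$-simplex $\sigma$ of $\mathsf{Fin}_*$ is $X(w(\sigma))$, and I would verify the three clauses of \Cref{defi1} together with completeness. Under the dictionary sending the inert maps of $\mathsf{Fin}_*$ to the leaf and external-face inclusions of the forests $w(\sigma)$, the coCartesian-lift and fibre-product conditions translate into the inner-edge Segal condition on $X$, while completeness of $w^*X$ reduces to completeness of $i^*X$. For part (2) I would check symmetrically that the dendroidal nerve $\delta'(\cO^\otimes)$, whose value on a forest $F$ is the mapping space between the associated discrete operad $i(F)$ and $\cO^\otimes$ in $\ell\Op_\infty$, satisfies the inner-edge Segal, forest-product, and completeness conditions of \Cref{dendrseg1}; each follows from the corresponding axiom of \Cref{defi1} once one knows that grafting of forests is computed by operadic composition in $\cO^\otimes$.

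The substance is part (3). Conservativity of $\lambda=w^*$ I would extract from the Segal presentation of $\mathsf{D}\Op_\infty$: a map there is an equivalence as soon as it is one on the elementary objects $\eta$ and the corollas $C_n$, and these are precisely recovered by the values of $w^*$ on the $0$- and $1$-cells of $\mathsf{Fin}_*$ (the object $\underline{1}_+$ gives $\eta$, the active map $\underline{n}_+\to\underline{1}_+$ gives $C_n$), so $\lambda$ reflects equivalences. Full faithfulness of $\delta$ is a nerve theorem: the unit $\cO^\otimes\to\lambda\delta(\cO^\otimes)$ must be an equivalence, i.e. a Lurie $\infty$-operad is reconstructed from the mapping spaces out of discrete forest-operads. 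I would prove this by reducing along the Segal and coCartesian conditions to the elementary forests, where the statement becomes the identification of the colors of $\cO^\otimes$ with $\cO^\otimes_{\underline{1}_+}$ and of the $n$-ary multimorphism spaces with the active fibres, exactly as recorded after \Cref{defi1}.

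The main obstacle I anticipate is concentrated in the descent and in the full faithfulness of $\delta$. First, descending $w_!\dashv w^*$ requires showing that $w^*$ carries $\mathsf{D}\Op_\infty$-local objects to $\ell\Op_\infty$-local ones and, dually, that $w_!$ sends the local equivalences presenting ${\Cat_\infty}_{/\mathsf{Fin}_*}$ and the inert marking to dendroidal local equivalences; the non-fullness of $\ell\Op_\infty$ inside ${\Cat_\infty}_{/\mathsf{Fin}_*}$ makes this bookkeeping delicate, since $\delta$ cannot simply be read off as the localization of $w_!$ along a reflective inclusion. Second, the nerve theorem behind the full faithfulness of $\delta$ rests on a careful analysis of $w$ itself: one must control how the essentially unique active–inert factorization of a map in $\mathsf{Fin}_*$ matches the grafting-and-leaf decomposition of the associated forest, and establish the cofinality needed for the Segal reconstructions on the two sides to agree. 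This combinatorial comparison of $w$ with the operadic structure is where the real work lies.
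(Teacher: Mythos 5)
You should know at the outset that the paper does not prove this statement: it is \cite[Theorem 3.1.4]{HM:OELIODIO}, imported as a black box, so there is no in-paper argument to measure yours against. Judged on its own terms, your outline has the right architecture and is consistent with the shape of the actual Hinich--Moerdijk proof: restrict $w^*$ to obtain $\lambda$, define $\delta$ by a nerve formula, and conclude an adjoint equivalence from full faithfulness of the left adjoint plus conservativity of the right adjoint via the triangle identities. Your conservativity argument for $\lambda$ is essentially complete: equivalences in $\mathsf{D}\Op_\infty$ are detected levelwise, the Segal and forest conditions reduce detection to $\eta$ and the corollas $C_n$, and these are exactly the values of $w$ on the $0$- and $1$-simplices of $\mathsf{Fin}_*$.

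However, the two steps you defer are the entire content of the theorem, and your proposal does not supply them. First, $\delta\dashv\lambda$ does not simply descend from $w_!\dashv w^*$: since $\ell\Op_\infty$ is a \emph{non-full} subcategory of ${\Cat_\infty}_{/\mathsf{Fin}_*}$, $\delta$ is not the reflection of $w_!$ along a localization, and one must separately verify that the nerve formula $\delta(\cO^\otimes)(F)\simeq\mathsf{Map}_{\ell\Op_\infty}(i(F),\cO^\otimes)$ lands in $\mathsf{D}\Op_\infty$ and is left adjoint to $\lambda$; you correctly flag this obstacle but do not resolve it. Second, full faithfulness of $\delta$ amounts to identifying, for every simplex $\sigma$ of $\mathsf{Fin}_*$, the mapping space $\mathsf{Map}_{\ell\Op_\infty}(i(w(\sigma)),\cO^\otimes)$ with the space of maps $\sigma\to\cO^\otimes$ over $\mathsf{Fin}_*$. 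This is a genuine nerve theorem: it needs an explicit handle on mapping spaces out of the discrete operads $\Omega(F)$, and it needs the combinatorial control of the image of $w$ (the retract lemma \cite[Lemma 3.1.2]{HM:OELIODIO}, which the present paper does invoke in the proof of \Cref{luppolo}). ``Reducing along the Segal and coCartesian conditions to elementary forests'' names the goal rather than proving it; as you yourself observe, this is where the real work lies, and it occupies the bulk of the source paper.
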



\noindent We now use this equivalence to prove the following fundamental 
 
\begin{theorem}\label{luppolo}
Let $X$ be a dendroidal $\infty$-operad, let $\cO^\otimes$ be a Lurie $\infty$-operad, with $\lambda X \simeq \cO^\otimes$.
\noindent The Hinich-Moerdijk comparison functor induces an equivalence of $\infty$-categories $$ \adjunction{\delta}{\Leftlax_{\cO^\otimes}}{\mathsf{DLeft}_X}{\lambda}.$$
\end{theorem}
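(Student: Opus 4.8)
The plan is to use the Hinich--Moerdijk equivalence $\adjunction{\delta}{\ell\Op_\infty}{\mathsf{D}\Op_\infty}{\lambda}$ of \Cref{victor} as a black box and show that it restricts to the full subcategories of left fibrations on either side. Since $\delta$ and $\lambda$ are mutually inverse equivalences of $\infty$-categories, for a fixed $\cO^\otimes$ with $\lambda X \simeq \cO^\otimes$ they induce inverse equivalences between the slice categories ${\ell\Op_\infty}_{/\cO^\otimes}$ and ${\mathsf{D}\Op_\infty}_{/X}$ (an equivalence of $\infty$-categories always restricts to an equivalence on slices over corresponding objects). The content of the theorem is therefore the claim that under this slice-level equivalence, the full subcategory $\Leftlax_{\cO^\otimes}$ of operadic left fibrations is carried \emph{onto} the full subcategory $\mathsf{D}\Left_X$ of dendroidal left fibrations. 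Because both inclusions are full, it suffices to check that the equivalence matches the two classes of objects; no separate verification of fully-faithfulness is needed once the objects correspond.

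The key is to compare the two local-object descriptions. First I would recall that $\mathsf{D}\Left_X$ consists of the $\cL$-local objects of ${\mathsf{D}\Op_\infty}_{/X}$, where $\cL = \{\ell(F)\hookrightarrow F\}_{F\in\phi}$ is the set of leaf inclusions of forests, and that (by the remark following \Cref{opdleft}, via \Cref{last}) $\Leftlax_{\cO^\otimes}$ consists of the objects of ${\ell\Op_\infty}_{/\cO^\otimes}$ that become $\{\Delta^0\xhookrightarrow{\{0\}}\Delta^n\}_{n\geq 0}$-local in ${\Cat_\infty}_{/\cO^\otimes}$ after forgetting to $\infty$-categories. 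The heart of the proof is to show that, up to the equivalence, being local with respect to leaf inclusions of \emph{forests} matches being a left fibration of the underlying $\infty$-categories. The natural route is through the functor $w\colon \mathbb{F}\to\phi$ that defines the comparison: one must understand how $w$, and hence $w_!$ and $w^*$, interacts with the two generating sets. Concretely I would analyze the image under $w$ of the spine/leaf data: the leaf inclusion $\ell([n])\hookrightarrow[n]$ in $\Delta\subseteq\phi$ corresponds, as noted in \Cref{last}, to $\{0\}\colon\Delta^0\to\Delta^n$, and more generally the forest leaf inclusions $\ell(F)\hookrightarrow F$ should decompose, via the Segal condition, into a product of such inclusions over the constituent corollas and edges. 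The operadic Segal condition of \Cref{amsterdam} is what guarantees that checking locality forest-wise is equivalent to checking it on the underlying simplicial/categorical level.

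The main obstacle I expect is the bookkeeping translating $\cL$-locality through the adjunction $(w_!,w^*)$. Locality is preserved under the equivalence only once one verifies that $\lambda$ and $\delta$ send the relevant saturated classes of maps to one another, i.e. that the left-adjoint $w_!$ sends the maps $\{\Delta^0\hookrightarrow\Delta^n\}$ (or rather their images in $\mathsf{PSh}(\mathbb{F})$) into the strongly saturated class generated by $\cL$, and conversely that $w^*$ of an $\cL$-local object is $\{\{0\}\colon\Delta^0\to\Delta^n\}$-local. Here the completeness part of \Cref{dendrseg1} and the Segal conditions must be used to reduce everything to corollas $C_n$ and the linear trees $i([n])$, at which point the computation of $w$ on these generators (worked out via the leveled-forest description of simplices of $\mathsf{Fin}_*$) closes the argument. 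A clean way to organize this is: (i) show the slice equivalence restricts to a functor $\delta\colon\Leftlax_{\cO^\otimes}\to \mathsf{D}\Op_{\infty/X}$ landing in $\mathsf{D}\Left_X$; (ii) show $\lambda$ sends $\mathsf{D}\Left_X$ back into $\Leftlax_{\cO^\otimes}$; (iii) conclude, since both are full subcategories of equivalent slices and the two functors are mutually inverse, that the restrictions are inverse equivalences. Step (i) is where the genuine comparison of locality classes lives and will require the most care; steps (ii) and (iii) are then essentially formal.
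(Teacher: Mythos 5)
Your strategy coincides with the paper's: restrict the Hinich--Moerdijk equivalence of \Cref{victor} to the slices over $\cO^\otimes$ and $X$, observe that both subcategories of left fibrations are full, and then match the two locality conditions --- $\cL$-locality on the dendroidal side against $\{\Delta^0\xhookrightarrow{\{0\}}\Delta^n\}$-locality in ${\Cat_\infty}_{/\cO^\otimes}$ on the Lurie side --- through the adjunction $(w_!,w^*)$. The chain of identifications you would need (fullness of ${\Cat_\infty}_{/\cO^\otimes}$ in $\mathsf{PSh}(\mathbb{F})_{/\cO^\otimes}$, the adjunction, and the fact that $\tilde{w_!}$ of a simplex over $\cO^\otimes$ is a forest over $X$ and hence already lies in the full subcategory ${\mathsf{D}\Op_\infty}_{/X}$) is exactly what the paper writes out, and it gives the implication ``$\cL$-local $\Rightarrow$ local with respect to the simplicial generators'' essentially for free. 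The one point where you genuinely diverge, and the only place your outline is at risk, is the converse. The image of $w$ consists only of \emph{leveled} forests, so locality on the Lurie side a priori yields locality only with respect to leaf inclusions of forests of the form $w(\overline{p})$, which is weaker-looking than full $\cL$-locality. The paper closes this gap by citing \cite[Lemma 3.1.2]{HM:OELIODIO}: every forest is a retract of one of the form $w(\overline{p})$, and local objects are stable under retracts of the test maps. You instead propose to reduce general leaf inclusions to corolla leaf inclusions via the Segal condition; since every single corolla is $w$ of a $1$-simplex, this would also close the argument, but it requires an induction over the tree structure carried out in the slice over $X$, which you have only gestured at. So: same architecture, one substitute lemma --- either import the retract statement or actually write down the Segal induction, since without one of the two the direction ``operadic left fibration $\Rightarrow$ dendroidal left fibration'' does not follow.
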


\begin{proof} 
\Cref{victor} descends to an equivalence between over-$\infty$-categories $$\adjunction{\delta}{{\ell\mathsf{Op}_\infty}_{/\cO^\otimes}}{{\mathsf{DOp}_\infty}_{/X}}{\lambda}.$$
Recall that we denoted by $\cL$ the set of morphisms in $\mathsf{PSh}(\phi)$ given by $$\cL\coloneqq  \{\ell(F)\hookrightarrow F\}_{F\in \phi}, $$ and write $\cI$ for the set of morphisms in $\mathsf{PSh}(\Delta)$ given by $$ \cI\coloneqq \{\Delta^0\xhookrightarrow{\{0\}}\Delta^n\}_{n\geq 0}.$$

\noindent As dendroidal left fibrations over $X$ are $\cL$-local objects in ${\mathsf{DOp}_\infty}_{/X}$ and operadic left fibrations over $\cO^\otimes$ are objects in ${\ell\Op_\infty}_{/\cO^\otimes}$ which become $\cI$-local after post-composing with the forgetful functor ${\ell\Op_\infty}_{/\cO^\otimes}\to {\mathsf{Cat}_\infty}_{/\cO^\otimes}$, we only need to prove that, for any object $(Y,f)\in{ \mathsf{D}\Op_\infty}_{ /X}  $ with $(\cD^\otimes,\alpha^\otimes)=\lambda(Y,f)$, one has that $$ \text{$(Y,f)$ is $\cL$-local if and only if $(\cD^\otimes,\alpha^\otimes)$ is $\cI$-local in ${\Cat_\infty}_{/\cO^\otimes}$}.$$



\noindent As illustrated in \Cref{adjslice}, the adjunction $(w_!,w^*)$ induces an adjunction $$\adjunction{\tilde{w_!}}{\mathsf{PSh}(\mathbb{F})_{/\cO^\otimes}}{\mathsf{PSh}(\phi)_{/X}}{{w^*}},$$ where the left adjoint $\tilde{w_!}$ consists in applying $w_!$ and postcomposing with the counit, while the right adjoint consists in applying $w^*$.

\noindent Let $n\geq 0$ and fix $(n,p\colon \Delta^n \to \cO^\otimes)$ in ${\Cat_\infty}_{/\cO^\otimes}$. Because ${\Cat_\infty}_{/\cO^\otimes}$ is a full subcategory of $\mathsf{PSh}(\mathbbm{F})_{/\cO^\otimes}$, one has $$ \mathsf{Map}_{{\Cat_\infty}_{/\cO^\otimes}}((\Delta^n, p), (\cD^\otimes, \alpha^\otimes)) \simeq \mathsf{Map}_{{\mathsf{PSh}(\mathbb{F})}_{/\cO^\otimes}}((\Delta^n, p), (\cD^\otimes,\alpha)).$$

\noindent As $(\cD^\otimes,\alpha^\otimes)\simeq \lambda (Y,f) \simeq w^* (Y,f)$, by adjunction we obtain $$\mathsf{Map}_{\mathsf{PSh}(\mathbb{F})_{/\cO^\otimes}}((\Delta^n, p), (\cD^\otimes,\alpha))\simeq \mathsf{Map}_{\mathsf{PSh}(\phi)_{/X}}(\tilde{w_!}(\Delta^n,p), (Y,f)). $$


\noindent The object $\tilde{w_!}(\Delta^n,p)$ is of the form $$(w(\overline{p}), q\colon w(\overline{p})\to X),$$ where $\overline{p}$ is the $n$-simplex of $\mathsf{Fin}_*$ obtained as the composition $ \Delta^n \xrightarrow{p}\cO^\otimes \to \mathsf{Fin}_* $. In particular, $w(\overline{p})$ belongs to $\phi$, and hence to $\mathsf{DOp}_\infty$; as $X$ is a dendroidal $\infty$-operad and $\mathsf{D}\Op_\infty$ is full in $\mathsf{PSh}(\phi)$, one has that $\tilde{w_!}(\Delta^n,p)$ belongs to ${\mathsf{D}\Op_\infty}_{/X}$. In particular, one has the equivalence $$ \mathsf{Map}_{\mathsf{PSh}(\phi)_{/X}}((w(\overline{p}), q), (Y,f)) \simeq \mathsf{Map}_{{\mathsf{D}\Op_\infty }_{/X} }((w(\overline{p}), q), (Y,f)).$$

\noindent Consider now the inclusion $i\colon \Delta^0\xrightarrow{\{0\}} \Delta^n$ in $\cI$. Reasoning in the same way, we obtain the commutative diagram
\[
\begin{tikzcd}
\mathsf{Map}_{{\Cat_\infty}_{/\cO^\otimes}}((\Delta^n,p), (\cD^\otimes, \alpha^\otimes)) \arrow[r]\arrow[d, "\vsimeq"']&	\mathsf{Map}_{{\Cat_\infty}_{/\cO^\otimes}}((\Delta^0,pi, (\cD^\otimes, \alpha^\otimes))\arrow[d, "\vsimeq"] \\
\mathsf{Map}_{{\mathsf{D}\Op_\infty}_{/X}}((w(\overline{p}),q), (Y,f))\arrow[r] &\mathsf{Map}_{{\mathsf{D}\Op_\infty}_{/X}}((\ell(w(\overline{p})),qi), (Y,f))
\end{tikzcd}
\]
\noindent where the vertical arrows are equivalences. This shows that, if $(Y,f)$ is $\cL$-local, then $(\cD^\otimes,\alpha^\otimes))$ is $\cI$-local. After \cite[Lemma 3.1.2]{HM:OELIODIO}, we know that any forest is a retract of a forest of the form $w(\overline{p})$, which shows that if $(\cD^\otimes,\alpha^\otimes)$ is $\cI$-local, then $(Y,f)$ is $\cL$-local.

\noindent We have hence shown that $(Y,f)\in \mathsf{DOp}_\infty$ is a dendroidal left fibration if and only if $(\cD^\otimes, \alpha^\otimes)=\lambda(Y,f)$ is a operadic left fibration, and this concludes the proof.
\end{proof}


\begin{coro}\label{fibrewise1}
Let $\cO^\otimes$ be a Lurie $\infty$-operad and consider a morphism of operadic left fibrations
\[
\begin{tikzcd}
\cT^\otimes \arrow[rr, "f"] \arrow[rd, "\alpha^\otimes"']& & \cQ^\otimes \arrow[ld, "\beta^\otimes"]\\
& \cO^\otimes &
\end{tikzcd}
\] \noindent Then $f$ is an equivalence in $\Leftlax_{\cO^\otimes}$ if and only if, for any object $c$ of $\cO$, the map between fibres $$ f_c \colon (\cT,\alpha)_c \longrightarrow (\cQ, \beta)_c$$ is an equivalence of spaces.

\end{coro}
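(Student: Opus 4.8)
The plan is to reduce the statement to the analogous characterization for left fibrations of $\infty$-categories, namely \cite[Proposition 1.10]{B:SOGC}, applied over the total $\infty$-category $\cO^\otimes$, and then to cut down the class of fibres one must test to the colors by means of the operadic Segal condition of \Cref{amsterdam}. The forward implication is immediate: if $f$ is an equivalence in $\Leftlax_{\cO^\otimes}$ then its underlying map $Uf\colon \cT^\otimes \to \cQ^\otimes$ is an equivalence of $\infty$-categories, and pulling back along any point $c\colon \Delta^0 \to \cO^\otimes$ landing in $\cO$ shows that $f_c$ is an equivalence of spaces. The content is the converse, so I assume that $f_c$ is an equivalence for every color $c\in \cO$ and aim to conclude that $f$ is an equivalence.

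First I would record that the forgetful functor detects equivalences: a morphism of $\Leftlax_{\cO^\otimes}$ is an equivalence if and only if its image $Uf$ is an equivalence of $\infty$-categories. Indeed, $\Leftlax_{\cO^\otimes}$ is full in ${\ell\Op_\infty}_{/\cO^\otimes}$, and equivalences in a slice are detected on total objects; moreover, since $\ell\Op_\infty$ is the (non-full) subcategory of ${\Cat_\infty}_{/\mathsf{Fin}_*}$ whose morphisms are the inert-preserving ones, any $Uf$ that is an equivalence of underlying $\infty$-categories admits an inverse which automatically preserves inert arrows, hence lies in $\ell\Op_\infty$. Thus it suffices to prove that $Uf$ is an equivalence of $\infty$-categories.

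By definition of operadic left fibration, the structure maps $U\alpha^\otimes$ and $U\beta^\otimes$ exhibit $\cT^\otimes$ and $\cQ^\otimes$ as left fibrations over the total $\infty$-category $\cO^\otimes$, and $Uf$ is a morphism between them over $\cO^\otimes$. By \cite[Proposition 1.10]{B:SOGC}, $Uf$ is an equivalence if and only if for every object $x\in \cO^\otimes$ the induced map of fibres $(\cT^\otimes)_x \to (\cQ^\otimes)_x$ is an equivalence of spaces. It therefore remains to deduce fibrewise equivalence over all $x\in \cO^\otimes_{\underline{n}_+}$ from fibrewise equivalence over all colors. This is exactly where \Cref{amsterdam} enters: for $x\simeq x_1\oplus \dots \oplus x_n$ the inert maps $x\to x_i$ induce equivalences $(\cT^\otimes)_x \xrightarrow{\simeq} \prod_{i=1}^n (\cT)_{x_i}$ and $(\cQ^\otimes)_x \xrightarrow{\simeq} \prod_{i=1}^n (\cQ)_{x_i}$, under which $f_x$ is identified with $\prod_{i=1}^n f_{x_i}$. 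As each $f_{x_i}$ is an equivalence by hypothesis, so is $f_x$, the case $n=0$ being trivial since both fibres are then contractible.

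The one point requiring care — and the main obstacle — is the naturality of this Segal decomposition in $f$: one must verify that the identification of $(\cT^\otimes)_x$ with the product of its colorwise fibres is functorial along morphisms of operadic left fibrations, so that $f_x$ genuinely becomes $\prod_i f_{x_i}$. This follows because such a morphism preserves inert arrows and hence commutes with the coCartesian lifts of the projections $\rho^i$ that define the decomposition. Combining the three steps yields that $f_c$ being an equivalence for all $c\in\cO$ forces $Uf$ to be fibrewise an equivalence over all of $\cO^\otimes$, hence an equivalence of $\infty$-categories, hence $f$ an equivalence in $\Leftlax_{\cO^\otimes}$. I note finally that this is precisely the Lurie-side incarnation of the fibrewise characterization of covariant fibrations, and that one could alternatively transport the statement across the equivalence $\delta\colon \Leftlax_{\cO^\otimes}\simeq \mathsf{DLeft}_X$ of \Cref{luppolo}.
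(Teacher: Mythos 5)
Your argument is correct, but it takes a genuinely different route from the paper. The paper deduces the corollary from \Cref{luppolo}: it transports the statement across the equivalence $\adjunction{\lambda}{\mathsf{D}\Left_X}{\Leftlax_{\cO^\otimes}}{\delta}$ and invokes the fibrewise characterization of \emph{dendroidal} left fibrations ({\cite[Proposition 13.8]{HeMo:SDHT}}) to get the criterion over all objects $x\in\cO^\otimes$, before performing the same Segal reduction to colors that you do. You instead stay entirely on the Lurie side: you first observe that the forgetful functor $\Leftlax_{\cO^\otimes}\to\Left_{\cO^\otimes}$ detects equivalences (which is right --- equivalences in a slice are detected on total objects, and an inverse of an equivalence over $\mathsf{Fin}_*$ automatically preserves inert arrows since equivalences reflect coCartesianness), and then apply the categorical fibrewise criterion {\cite[Proposition 1.10]{B:SOGC}} over the total $\infty$-category $\cO^\otimes$. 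The point you flag about naturality of the Segal decomposition in $f$ is the right one to worry about, and your justification via preservation of inert arrows is adequate. The trade-off: your proof is more elementary and, notably, independent of \Cref{luppolo}, showing that the corollary does not actually require the dendroidal comparison; the paper's proof instead leans on the machinery it has just built and keeps the statement visibly parallel to its dendroidal counterpart. You even note the paper's route as an alternative at the end, so you are aware both work.
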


\begin{proof}
The condition is invariant under equivalence of $\infty$-categories, and by \cite[Proposition 13.8]{HeMo:SDHT}, this characterization holds in $\mathsf{D}\Left_X$ for any dendroidal $\infty$-operad $X$. Consider now $X\simeq \delta(\cO^\otimes)$; by \Cref{luppolo}, there is an equivalence $\adjunction{\lambda}{\mathsf{D}\Left_X}{\Leftlax_{\cO^\otimes}}{\delta}$, hence we have that $f\colon (\cT^\otimes, \alpha^\otimes)\to (\cQ^\otimes, \beta^\otimes)$ is an equivalence if, for any $x\in \cO^\otimes$, there is an equivalence of fibres $f_{x}\colon (\cT,\alpha)_{x}\to (\cQ,\beta)_{x}.$
By the Segal condition for $\infty$-operads, any such $x\in \cO^\otimes$ decomposes as $x\simeq c_1\oplus\dots\oplus c_n \in \cO^\otimes_{\underline{n}_+}$ for some $n$, and $f_x$ decomposes as $f_{c_1}\oplus\dots\oplus f_{c_n}$. It follows that we can equivalently check the condition in the statement only for objects $c$ of $\cO$, as wanted.
 \end{proof}

\section{Monoidal unstraightening of strong monoidal functors}\label{section7} 
\noindent Motivated by the universal property of the symmetric monoidal envelope of an $\infty$-operad, in \Cref{SS:2:2} we describe, given any symmetric monoidal $\infty$-category $\cC^\otimes$, the $\infty$-category of left fibrations over $\cC^\otimes$ whose straightening corresponds to strong monoidal functors $\cC^\otimes \to \cS^\times$. 

\noindent Let us start by recollecting the results which allow to realize lax monoidal functors as certain left fibrations of symmetric monoidal $\infty$-categories. As the result 

\subsection{Monoidal straightening-unstraightening for $\infty$-categories}\label{SS:2:1}
In \cite{C:HCHA}, Cisinski realizes the unstraightening functor for an $\infty$-category as the base change (the pullback) along the \emph{universal left fibration}, which can be identified with the forgetful functor $q_u\colon \cS_{\bullet/}\to \cS$  from the $\infty$-category of pointed spaces to  the $\infty$-category of spaces. In particular, given an $\infty$-category $\cC$, the unstraightening functor $$\unst^\cC\colon \mathsf{Fun}(\cC,\cS)\to \Left_\cC$$ sends a functor $F\colon \cC \to \cS$ to the left fibration $$ \unst^\cC(F)\simeq (S_{/\bullet}\times_{\cS} \cC, S_{/\bullet}\times_{\cS} \cC\to \cC \xrightarrow{F} \cS).$$

\noindent We will make essential use of the following important
\begin{rmk}\label{fibremanch}
For any functor $F\colon \cC\to \cS$ and any object $c$ of $\cC$, there is an equivalence of spaces $F(c)\simeq \unst(F)_c$.
\end{rmk} 

\noindent When $\cC$ is the underlying $\infty$-category of a symmetric monoidal $\infty$-category, both $\infty$-categories in the straightening-unstraightening equivalence have a symmetric monoidal structure, which on the functor category is the Day convolution. In \cite[Corollary C]{R:MGCIC}, it is proven that the un/straightening equivalence can be enhanced to a monoidal equivalence $$ \adjunction{\st^{\cC,\otimes}}{\left(\Left_\cC\right)^\otimes}{\mathsf{Fun}(\cC,\cS)^\otimes}{\unst^{\cC,\otimes}}.$$ of symmetric monoidal $\infty$-categories. In particular, the adjunction induces an equivalence between the $\infty$-categories of commutative algebras, and this corollary was independently proven also by Hinich in \cite{H:RAM}. Since in this work we only make use of the corollary and we do not use the full strength of the monoidal Grothendieck construction, let us attribute this result to Hinich and state it in the following

\begin{theorem}[{\cite[A.2]{H:RAM}}]\label{monstr}\label{lemmaalg}
For any symmetric monoidal $\infty$-category $\cC^\otimes$, the straightening-unstraightening adjunction induces an equivalence of $\infty$-categories
$$ \adjunction{\mathsf{St}^{\cC,\otimes}}{\mathsf{smLeft}_{\cC^\otimes}}{\mathsf{Fun}^\mathsf{lax}(\cC^\otimes)}{\mathsf{Unst}^{\cC,\otimes}},$$ where the $\infty$-category $\mathsf{smLeft}_{\cC^\otimes}$ is the $\infty$-category of \Cref{smleft1}.
\end{theorem}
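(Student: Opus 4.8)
The plan is to deduce the monoidal statement from the ordinary un/straightening equivalence $\unst^\cC\colon \mathsf{Fun}(\cC,\cS)\xrightarrow{\ \simeq\ }\Left_\cC$ by transporting a Day convolution structure and then passing to commutative algebra objects. First I would equip $\mathsf{Fun}(\cC,\cS)$ with the Day convolution symmetric monoidal structure $\mathsf{Fun}(\cC,\cS)^\otimes$ determined by $\cC^\otimes$ and the cartesian structure $\cS^\times$, and invoke the standard computation that its commutative algebra objects are precisely the lax monoidal functors, $\mathsf{CAlg}(\mathsf{Fun}(\cC,\cS)^\otimes)\simeq \mathsf{Fun}^{\mathsf{lax}}(\cC^\otimes,\cS^\times)$ (see \cite{Lu:HA}). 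This rewrites the right-hand side of the desired equivalence as an $\infty$-category of commutative algebras.

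Next I would transport the Day convolution structure across $\unst^\cC$, producing a symmetric monoidal $\infty$-category $(\Left_\cC)^\otimes$ for which $\unst^\cC$ is, by construction, a symmetric monoidal equivalence; note this only uses the non-monoidal un/straightening equivalence, not a separately built monoidal Grothendieck construction. Since the assignment $\mathsf{CAlg}(-)$ is functorial in symmetric monoidal functors and preserves equivalences, we obtain at once an equivalence $\mathsf{CAlg}((\Left_\cC)^\otimes)\simeq \mathsf{Fun}^{\mathsf{lax}}(\cC^\otimes,\cS^\times)$, with the straightening and unstraightening functors on algebras induced by $\st^\cC$ and $\unst^\cC$.

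It remains to identify $\mathsf{CAlg}((\Left_\cC)^\otimes)$ with the $\infty$-category $\mathsf{smLeft}_{\cC^\otimes}$ of \Cref{smleft1}, defined intrinsically as left fibrations over $\cC^\otimes$ internal to symmetric monoidal $\infty$-categories. A commutative algebra object is a left fibration $p\colon \cD\to\cC$ equipped with coherent fibrewise multiplications; from such data I would build, via the operadic Grothendieck construction, a symmetric monoidal $\infty$-category $\cD^\otimes$ together with a strong monoidal functor $\cD^\otimes\to\cC^\otimes$ whose underlying functor is $p$, and conversely. The content of the compatibility is that the transported tensor product on $\Left_\cC$ is computed fibrewise by the maps $\unst(F)_x\times\unst(F)_y\to\unst(F)_{x\otimes y}$ prescribed by the lax structure of $F$; this can be checked using the fibre identification $F(c)\simeq\unst(F)_c$ of \Cref{fibremanch} together with the fibrewise characterization of equivalences of left fibrations.

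I expect this final identification to be the main obstacle. Everything preceding it is formal once the Day convolution machinery is available, whereas matching the abstract algebra-object description of the transported monoidal structure with the concrete Segal/operadic definition of $\mathsf{smLeft}_{\cC^\otimes}$ amounts to showing that the forgetful functor $\mathsf{smLeft}_{\cC^\otimes}\to\Left_\cC$ exhibits its source as $\mathsf{CAlg}((\Left_\cC)^\otimes)$ — i.e. that the operadic coherence defining $\mathsf{smLeft}_{\cC^\otimes}$ is exactly the coherence of a commutative algebra for the transported structure. A viable alternative, closer to Hinich's treatment, is to bypass the transport step and instead unstraighten lax monoidal functors by hand, producing $\cD^\otimes\to\cC^\otimes$ directly from the lax coherences and then proving the resulting functor is fully faithful and essentially surjective by reduction to the underlying un/straightening equivalence.
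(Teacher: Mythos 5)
First, a point of order: the paper does not prove this statement. It is imported from Hinich \cite{H:RAM} (and is also a corollary of Ramzi \cite{R:MGCIC}); the surrounding subsections only unpack the two symmetric monoidal structures involved and record, again by citation, the identification $\mathsf{CAlg}((\Left_\cC)^\otimes)\simeq \mathsf{sm}\Left_{\cC^\otimes}$ of \Cref{questionable}. So there is no in-paper proof to match your attempt against; what can be judged is whether your outline would itself constitute a proof.

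As it stands it would not, and the gap is concentrated exactly where you suspect. Your first two steps are genuinely formal, but for that very reason they carry no information: transporting Day convolution along $\unst^\cC$ produces a symmetric monoidal structure on $\Left_\cC$ about which you know nothing except that $\unst^\cC$ is monoidal for it, so the resulting equivalence $\mathsf{CAlg}((\Left_\cC)^\otimes)\simeq \mathsf{Fun}^{\mathsf{lax}}(\cC^\otimes,\cS^\times)$ is a tautology. The entire content of the theorem then sits in your step 3, the identification of $\mathsf{CAlg}$ of the \emph{transported} structure with the concrete $\infty$-category $\mathsf{sm}\Left_{\cC^\otimes}$ of \Cref{smleft1}. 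To carry this out you must show that the transported tensor product of left fibrations agrees, coherently, with the intrinsic one (cocartesian pushforward of $\cD_1\times\cD_2\to\cC\times\cC$ along $\otimes\colon\cC\times\cC\to\cC$, as in the description of $(\Left_\cC)^\otimes$ recalled in the paper) --- equivalently, that $\unst^\cC$ carries Day convolution to this fibrewise operation. That assertion \emph{is} the monoidal un/straightening theorem. The fibre identification of \Cref{fibremanch} together with the Day convolution colimit formula lets you verify the agreement of the two binary products object by object, and fibrewise detection of equivalences upgrades this to equivalences of left fibrations; but matching two symmetric monoidal structures requires a coherent comparison of the full operad structures, not an objectwise check, and your outline offers no mechanism for producing it. Your closing alternative --- unstraightening lax monoidal functors by hand from their coherences, as Hinich actually does --- is the right escape, but it is a different proof rather than a completion of this one. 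In short: the proposal correctly reproduces the architecture of the cited proofs (Day convolution, passage to commutative algebras, identification of algebras in left fibrations) while leaving their one essential step unproved.
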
 

\noindent Throughout this article, we refer to the above adjunction as the \emph{monoidal un/straightening equivalence}. 

\noindent For sake of completeness, and to understand where the $\infty$-categories of commutative algebras of \Cref{lemmaalg} come from, let us give some detail on the monoidal $\infty$-categories  $(\mathsf{Left}_\cC)^\otimes$ and $\mathsf{Fun}(\cC,\cS)^\otimes$.

\subsubsection{Lax functors and sm-left fibrations}\label{brizzi1} Consider a symmetric monoidal $\infty$-category $\cC^\otimes$, let $\cC$ denote its underlying category. As shown in \cite[Example 2.2.6.9, Remark 4.8.1.13]{Lu:HA}), the $\infty$-category $\mathsf{Fun}(\cC, \cS)$ is the underlying $\infty$-category of a symmetric monoidal $\infty$-category, which we denote by $\mathsf{Fun}(\cC,\cS)^{\otimes_{\text{Day}}}$, whose tensor product is Day convolution. Commutative algebras in $\mathsf{Fun}(\cC, \cS)^{\otimes_{\text{Day}}}$ are lax monoidal functors, and there is an equivalence of categories $$ \mathsf{CAlg}(\mathsf{Fun}(\cC,\cS)^{\otimes_{\text{Day}}})\simeq \mathsf{Fun}^{\text{lax}}(\cC^\otimes, \cS^\times).$$

\begin{notation}
Throughout this section, we denote an object of $\mathsf{Fun}^{\text{lax}}(\cC^\otimes, \cS^\times)$ by $F^\otimes\colon \cC^\otimes \to \cS^\times$, while we write $F\colon \cC \to \cS$ for the induced functor $F^\otimes_{\underline{1}_+}$ between the underlying $\infty$-categories.
\end{notation}

\noindent In particular, the forgetful functor $$\mathsf{Fun}^{\text{lax}}(\cC^\otimes, \cS^\times)  \longrightarrow(\cC, \cS) $$ is written as $$F^\otimes \colon \cC^\otimes \to \cS^\times \mapsto F\colon \cC \to \cS.$$
 
 \begin{rmk}
The monoidal structure on spaces is cartesian; the reader can find in \cite[Proposition 2.4.1.7]{Lu:HA} an alternative description of lax functors into a cartesian symmetric monoidal $\infty$-category.


\end{rmk}

\subsubsection{Monoidal structure on $\Left_\cC$}\label{brizzi}

\noindent A symmetric monoidal  $\infty$-category $\cC^\otimes$ is a commutative algebra in $\Cat_\infty^\times$, hence ${\Cat_\infty^\times}_{/\cC^\otimes}$ is naturally a symmetric monoidal $\infty$-category, with underlying $\infty$-category ${\Cat_\infty}_{/\cC}$.

\noindent Given objects $(D_1, \gamma_1\colon D_1\to \cC), \dots,  (D_n,\gamma_n\colon D_n \to \cC), (D_\infty,\gamma\colon D_\infty\to \cC)$, the space of morphisms over $\beta\colon \underline{n}_+\to \underline{1}_+$ can be described as $$   \mathsf{Map}^\beta_{({\mathsf{Cat}_\infty}_{/\cC})^\otimes}((D_1,\gamma_1),\dots,(D_n,\gamma_n); (D_\infty,\gamma))\simeq \mathsf{Map}_{{\mathsf{Cat}_\infty}_{/\cC^{\times n}}}\left(\prod_{i=1}^n (D_i,\gamma_i), (\cC^{\times n}\times_\cC D_\infty,\Gamma)\right),$$  where $\Gamma$ is the map in the cartesian diagram
\[
\begin{tikzcd}
\cC^{\times n}\times_\cC D_\infty \arrow[d, "\Gamma"'] \arrow[rr] &                                                                         & D_\infty \arrow[d, "\gamma"] \\
\cC^{\times n}                                         & \cC^\otimes_{\underline{n}_+} \arrow[l, "\simeq"'] \arrow[r, "\beta_!"] & \cC                                
\end{tikzcd}
\]

\noindent The $\infty$-category $\Left_\cC$ of left fibrations over $\cC$ has a symmetric monoidal structure as well, and the corresponding symmetric monoidal $\infty$-category is given by $\Left_\cC^\otimes$. After \cite[Theorem 4.2]{R:MGCIC}, it can be identified with a non-full sub-$\infty$-operad of $(\Cat_\infty/\cC)^\otimes$, and, given left fibrations $(D_1, \gamma_1\colon D_1\to \cC), \dots,  (D_n,\gamma_n\colon D_n \to \cC)$, the forgetful functor identifies the space of multimorphisms of $\Left_{\cC}^\otimes$ over $\beta$, i.e. 
$$ \mathsf{Map}_{(\Left_{\cC})^\otimes}((D_1,\gamma_1),\dots,(D_n,\gamma_n);(D_\infty,\gamma))_{\beta},$$ with the full subcategory of $$  \mathsf{Map}_{({\mathsf{Cat}_\infty}_{/\cC})^\otimes} ((D_1,\gamma_1),\dots,(D_n,\gamma_n); (D_\infty,\gamma))_{\beta}$$ spanned by the components corresponding to functors $\prod_{i=1}^n D_i \to D_\infty$ lying over $\beta_! \colon \cC^{\times n}\to \cC$.

 \begin{deff}\label{smleft1}
 	A \emph{sm-left fibration } is a morphism $f\colon \cC^\otimes \to \cD^\otimes$ in $\mathsf{smCat}_\infty$ such that $Uf$ is a left fibration between $\infty$-categories.
 	\end{deff}

\noindent We can look at sm-left fibrations over a fixed symmetric monoidal $\infty$-category $\cC^\otimes$. We denote by $\mathsf{sm}\Left_{\cC^\otimes}$ the full subcategory of ${\mathsf{smCat}_{\infty}}_{/\cC^\otimes}$ spanned by sm-left fibrations over $\cC^\otimes$.

\begin{rmk}
As there is a forgetful functor $\mathsf{sm}\Cat_\infty\to \ell\Op_\infty$, given a symmetric monoidal $\infty$-category $\cC^\otimes$, there is also a forgetful functor $ \mathsf{sm}\Left_{\cC^\otimes}\to \Leftlax_{\cC^\otimes}$.
\end{rmk}
 
\noindent We can characterize commutative algebras in $\Left_\cC^\otimes$ as a non-full subcategory of ${\mathsf{sm}\Cat_\infty}_{/\cC^\otimes}$: they are the sm-left fibrations over $\cC^\otimes$, as stated in the following 

\begin{theorem}[{\cite{H:RAM}}]\label{questionable}
For any symmetric monoidal $\infty$-category $\cC^\otimes$, there is an equivalence of $\infty$-categories $$ \mathsf{CAlg}((\Left_{\cC})^\otimes) \simeq \mathsf{sm}\Left_{\cC^\otimes}.$$
\noindent Under this equivalence, the forgetful functor $$\mathsf{CAlg}((\Left_{\cC})^\otimes)\simeq\mathsf{sm}\Left_{\cC^\otimes}\to \Left_{\cC}$$
sends a sm-left fibration to the left fibration between the underlying categories, namely $$(\cO^\otimes, \alpha^\otimes \colon \cO^\otimes \to \cC^\otimes) \mapsto (\cO, \alpha\colon \cO\to \cC).$$
\end{theorem}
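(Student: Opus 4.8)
The plan is to realize both sides as the \emph{same} full subcategory of the slice $\infty$-category ${\mathsf{smCat}_\infty}_{/\cC^\otimes}$, namely the one spanned by those strong monoidal functors whose underlying functor is a left fibration, and then to read off the forgetful functor from this description.

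First I would set up the ambient comparison. Since $\cC^\otimes$ is a commutative algebra in $\Cat_\infty^\times$, the symmetric monoidal $\infty$-category $({\Cat_\infty}_{/\cC})^\otimes$ recalled in \Cref{brizzi} is the slice symmetric monoidal structure $({\Cat_\infty^\times})_{/\cC^\otimes}$. Combining the standard equivalence $\mathsf{CAlg}(({\Cat_\infty^\times})_{/\cC^\otimes})\simeq {\mathsf{CAlg}(\Cat_\infty^\times)}_{/\cC^\otimes}$ for commutative algebras in a slice (see \cite{Lu:HA}) with the identification $\mathsf{CAlg}(\Cat_\infty^\times)\simeq\mathsf{smCat}_\infty$ of commutative monoid objects of $\Cat_\infty$ with respect to the cartesian product with symmetric monoidal $\infty$-categories and strong monoidal functors, I obtain an equivalence $\mathsf{CAlg}(({\Cat_\infty}_{/\cC})^\otimes)\simeq {\mathsf{smCat}_\infty}_{/\cC^\otimes}$ under which a commutative algebra is sent to a strong monoidal functor $\alpha^\otimes\colon\cO^\otimes\to\cC^\otimes$, and the functor of evaluation at $\underline{1}_+$, that is $\mathsf{CAlg}(({\Cat_\infty}_{/\cC})^\otimes)\to {\Cat_\infty}_{/\cC}$, corresponds to $\alpha^\otimes\mapsto(\cO,\alpha)$.

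Next I would restrict along the sub-$\infty$-operad inclusion $(\Left_\cC)^\otimes\hookrightarrow ({\Cat_\infty}_{/\cC})^\otimes$ of \Cref{brizzi}. On fibres this inclusion is the full subcategory $\Left_\cC\subseteq {\Cat_\infty}_{/\cC}$, and by the multimorphism description of \Cref{brizzi} its active operations between left fibrations are exactly the multimorphisms of $({\Cat_\infty}_{/\cC})^\otimes$ lying over $\beta_!$; thus it is a full sub-$\infty$-operad on the colours given by left fibrations, and the induced functor $\mathsf{CAlg}((\Left_\cC)^\otimes)\to\mathsf{CAlg}(({\Cat_\infty}_{/\cC})^\otimes)$ is fully faithful onto the commutative algebras supported on left fibrations. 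Its essential image therefore consists, by the previous paragraph, of the strong monoidal functors $\alpha^\otimes\colon\cO^\otimes\to\cC^\otimes$ whose value $(\cO,\alpha)$ at $\underline{1}_+$ lies in $\Left_\cC$. Indeed, by the Segal condition $\cO^\otimes_{\underline{n}_+}\simeq\cO^{\times n}$ the values at every colour are then tuples of left fibrations, and the active structure maps $\cO^{\times n}\to\cO$, being functors over $\beta_!$ between left fibrations, are precisely the operations of $(\Left_\cC)^\otimes$. By \Cref{smleft1}, the condition that $\alpha$ be a left fibration is exactly the one defining sm-left fibrations, so the essential image is $\mathsf{sm}\Left_{\cC^\otimes}$.

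Putting these together yields the equivalence $\mathsf{CAlg}((\Left_{\cC})^\otimes)\simeq\mathsf{sm}\Left_{\cC^\otimes}$, both realized as the full subcategory of ${\mathsf{smCat}_\infty}_{/\cC^\otimes}$ on strong monoidal functors with underlying left fibration. The asserted compatibility with the forgetful functor is then immediate: the forgetful functor $\mathsf{CAlg}((\Left_{\cC})^\otimes)\to\Left_{\cC}$ is evaluation at $\underline{1}_+$, which by the first paragraph sends $\alpha^\otimes\colon\cO^\otimes\to\cC^\otimes$ to its underlying left fibration $\alpha\colon\cO\to\cC$. The main obstacle is the control of the essential image in the third paragraph: one must verify carefully, using the multimorphism description of $(\Left_{\cC})^\otimes$ in \Cref{brizzi}, both that the inclusion induces a fully faithful functor on commutative algebras and that a commutative algebra supported on left fibrations automatically has all of its operadic structure maps inside the sub-$\infty$-operad $(\Left_{\cC})^\otimes$.
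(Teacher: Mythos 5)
The paper does not actually prove this statement: it is imported wholesale from Hinich \cite{H:RAM} (via \cite[Theorem 4.2]{R:MGCIC}), so there is no internal proof to measure you against. Your outline is nonetheless the standard route to the result, and it is essentially how the cited sources argue: identify $\mathsf{CAlg}$ of the slice cartesian structure $({\Cat_\infty^\times})_{/\cC^\otimes}$ with ${\mathsf{smCat}_\infty}_{/\cC^\otimes}$ using $\mathsf{CAlg}(\cD_{/A})\simeq \mathsf{CAlg}(\cD)_{/A}$ and $\mathsf{CAlg}(\Cat_\infty^\times)\simeq \mathsf{smCat}_\infty$, then cut down along the sub-$\infty$-operad $(\Left_\cC)^\otimes\subseteq ({\Cat_\infty}_{/\cC})^\otimes$. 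The reading of the forgetful functor as evaluation at $\underline{1}_+$ is also correct and gives the second assertion for free.

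Two points need more care than you give them. First, you assert that $(\Left_\cC)^\otimes$ is a \emph{full} sub-$\infty$-operad on the left-fibration colours, whereas \Cref{brizzi} (following Ramzi) describes it as a \emph{non-full} sub-$\infty$-operad whose multimorphism spaces over $\beta$ are specified unions of components of those of $({\Cat_\infty}_{/\cC})^\otimes$. Both the fully-faithfulness of $\mathsf{CAlg}((\Left_\cC)^\otimes)\to \mathsf{CAlg}(({\Cat_\infty}_{/\cC})^\otimes)$ and the computation of its essential image hinge on this precise multimorphism description; you flag this yourself as ``the main obstacle,'' but as written it is the step carrying the real content of the theorem, and it is exactly the part you are borrowing from \cite[Theorem 4.2]{R:MGCIC} rather than proving. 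Second, you characterize the essential image as those strong monoidal $\alpha^\otimes\colon \cO^\otimes\to\cC^\otimes$ whose value $\alpha\colon\cO\to\cC$ at $\underline{1}_+$ is a left fibration, and then invoke \Cref{smleft1}; but \Cref{smleft1} requires the \emph{total} functor $\cO^\otimes\to\cC^\otimes$ to be a left fibration. For strong monoidal functors these two conditions do agree (the fibres of $\cO^\otimes\to\cC^\otimes$ over $\cC^\otimes_{\underline{n}_+}$ are products of fibres of $\alpha$, and a cocartesian-edge-preserving map of cocartesian fibrations over $\mathsf{Fin}_*$ that is fibrewise a left fibration is a left fibration), but this equivalence is an actual lemma and should be stated, not elided.
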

%

\subsection{Unstraightening of strong monoidal functors}\label{SS:2:2} Motivated by the universal property of the symmetric monoidal envelope of a Lurie $\infty$-operad (to be addressed systematically in \Cref{smenv}), in the next proposition we characterize, for any symmetric monoidal $\infty$-category $\cC^\otimes$, the full subcategory of $\mathsf{sm}\Left_{\cC^\otimes }$ whose monoidal straightening corresponds to \emph{strong} monoidal functors $\cC^\otimes \to \cS^\times$.

\begin{prop} \label{tiprego}\label{malaga} 
Let $\cC^\otimes$ be a symmetric monoidal $\infty$-category, and let $(\cD^\otimes, \alpha^\otimes)$ be an object of $\mathsf{sm}\Left_{\cC^\otimes}$. The following conditions are equivalent: 
\begin{enumerate}
	\item  $(\cD^\otimes, \alpha^\otimes)\simeq \unst^{\cC,\otimes}(F^\otimes)$ for a \emph{strong} monoidal functor $F^\otimes \colon \cC^\otimes 	\to \cS^\times$;
	\item for any $x\in \cC^\otimes_{\underline{n}_+}$, $x\simeq x_1\oplus\dots\oplus x_n $, the induced map between the fibres $$ (\cD^\otimes,\alpha^\otimes)_{x_1\oplus\dots\oplus x_n}\longrightarrow (\cD^\otimes,\alpha^\otimes)_{x_1\otimes\dots\otimes x_n}$$ induced by $\beta_! \colon x\to x_1\otimes \dots \otimes x_n$ in $\cC^\otimes$ is an equivalence.
\end{enumerate}
\end{prop}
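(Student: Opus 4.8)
The plan is to use the monoidal straightening equivalence of \Cref{monstr} to translate condition $(1)$ into a condition on $F^\otimes \coloneqq \st^{\cC,\otimes}(\cD^\otimes,\alpha^\otimes)$, and then to identify the fibre maps of condition $(2)$ with the lax structure maps of $F^\otimes$. Since $\st^{\cC,\otimes}$ and $\unst^{\cC,\otimes}$ are mutually inverse equivalences and strongness is invariant under equivalence in $\mathsf{Fun}^{\text{lax}}(\cC^\otimes,\cS^\times)$, condition $(1)$ holds precisely when $F^\otimes$ is \emph{strong} monoidal. Now a lax monoidal functor $F^\otimes\colon \cC^\otimes \to \cS^\times$ is strong exactly when it carries the coCartesian lift $\beta_!\colon x \to x_1\otimes\dots\otimes x_n$ of each active map $\beta\colon\underline{n}_+\to\underline{1}_+$ to a coCartesian morphism of $\cS^\times$ (this suffices since inert lifts are already preserved and the actives into $\underline{m}_+$ reduce to these via the $\rho^j$); because $\cS^\times$ carries the cartesian structure, this happens if and only if the associated map $F(x_1)\times\dots\times F(x_n)\to F(x_1\otimes\dots\otimes x_n)$ — the lax structure map of $F^\otimes$ — is an equivalence. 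Thus $(1)$ is equivalent to requiring that all lax structure maps of $F^\otimes$ be equivalences.

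Next I would compute the two fibres appearing in $(2)$. By \Cref{questionable}, the underlying left fibration of $(\cD^\otimes,\alpha^\otimes)$ straightens to the underlying functor $F\colon \cC\to \cS$, so \Cref{fibremanch} gives $(\cD^\otimes,\alpha^\otimes)_c\simeq F(c)$ for every $c\in\cC$; in particular $(\cD^\otimes,\alpha^\otimes)_{x_1\otimes\dots\otimes x_n}\simeq F(x_1\otimes\dots\otimes x_n)$. For the source, the Segal condition for operadic left fibrations (\Cref{amsterdam}) yields $(\cD^\otimes,\alpha^\otimes)_{x_1\oplus\dots\oplus x_n}\simeq \prod_{i=1}^n (\cD^\otimes,\alpha^\otimes)_{x_i}\simeq \prod_{i=1}^n F(x_i)$. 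Hence both the fibre map of $(2)$ and the lax structure map of $F^\otimes$ are maps $\prod_i F(x_i)\to F(x_1\otimes\dots\otimes x_n)$, and it remains to identify them.

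This identification is the heart of the argument, and the step I expect to be the main obstacle. The map in $(2)$ is the coCartesian transport of the left fibration $\alpha^\otimes$ along $\beta_!$. I would argue that this transport coincides with the tensor product of $\cD^\otimes$ restricted to fibres: since $\cD^\otimes$ is symmetric monoidal and $\alpha^\otimes$ is strong monoidal, the $\mathsf{Fin}_*$-coCartesian lift $(d_1,\dots,d_n)\to d_1\otimes\dots\otimes d_n$ of $\beta$ lies over $\beta_!$, and being a morphism in the total space of the \emph{left} fibration $\alpha^\otimes$ it is automatically $\alpha^\otimes$-coCartesian, hence is the transport. Therefore the fibre map of $(2)$ is $(d_1,\dots,d_n)\mapsto d_1\otimes\dots\otimes d_n$, i.e.\ the restriction to fibres of the multiplication of the commutative algebra in $(\Left_\cC)^\otimes$ corresponding to $(\cD^\otimes,\alpha^\otimes)$ under \Cref{questionable}. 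Unwinding the description of the monoidal structure on $\Left_\cC$ recalled in \Cref{brizzi}, where multimorphisms over $\beta$ are exactly the functors $\prod_i \cD_i\to \cD_\infty$ lying over $\beta_!$, this multiplication straightens precisely to the lax structure map of $F^\otimes$, giving the desired identification.

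With the identification in hand the proof concludes formally: condition $(2)$ asserts that every $\beta_!$-transport is an equivalence, which by the previous paragraph means every lax structure map of $F^\otimes$ is an equivalence, which by the first paragraph is equivalent to $F^\otimes$ being strong, i.e.\ to condition $(1)$. The only genuine subtlety is the compatibility between the fibrewise tensor product of $\cD^\otimes$ and the lax structure maps of its straightening; everything else is a direct application of \Cref{monstr}, \Cref{questionable}, \Cref{amsterdam} and \Cref{fibremanch}.
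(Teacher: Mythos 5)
Your proposal is correct and follows essentially the same route as the paper's proof: both reduce condition (1) to strong monoidality of the straightening $F^\otimes$, reduce strongness to the lax structure maps $\prod_i F(x_i)\to F(x_1\otimes\dots\otimes x_n)$ being equivalences, and identify these with the fibre maps of condition (2) via \Cref{fibremanch} and \Cref{amsterdam}. Your coCartesian-transport argument for the identification step is in fact slightly more explicit than the paper's, which simply asserts that the two morphisms are equivalent.
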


\begin{rmk}
By \Cref{amsterdam}, the condition of \Cref{tiprego} is equivalent to asking that the morphism $$  \prod_{i=1}^n (\cD,\alpha)_{x_i}\xleftarrow{\sim} (\cD^\otimes,\alpha^\otimes)_{x_1\oplus\dots\oplus x_n}\longrightarrow (\cD^\otimes,\alpha^\otimes)_{x_1\otimes \dots \otimes x_n}$$ is an equivalence.
\end{rmk}

\begin{proof} \noindent Consider an element $(\cD^\otimes, \alpha^\otimes)$ in $\mathsf{sm}\Left_{\cC^\otimes}$, and let $F^\otimes \in \mathsf{Fun}^{\text{lax}}(\cC^\otimes, \cS^\times)$ be such that $(\cD^\otimes,\alpha^\otimes) \simeq \unst^{\cC,\otimes}(F^\otimes)$. The lax monoidal functor $F^\otimes \colon \cC^\otimes \to \cS^\times$ is \emph{strong} if it preserves all coCartesian lifts. Since inert-active morphisms in $\mathsf{Fin}_*$ form a factorization system and $F^\otimes$ is lax monoidal, it is sufficient to check that $F^\otimes$ preserves coCartesian lifts of active morphisms, and it is actually enough to see that it preserves coCartesian lifts of $\beta\colon \underline{n}_+\to \underline{1}_+$.

\noindent Let $x_1,\dots,x_n$ be objects of $\cC$, let $\beta_! \colon x_1\oplus\dots\oplus x_n \to x_1\otimes \dots \otimes x_n $ be the coCartesian lift of $\beta$ over $x_1\oplus\dots\oplus x_n$, and consider $F^\otimes(\beta_!)\colon F^\otimes(x_1\oplus\dots\oplus x_n)\to F(x_1\otimes \dots \otimes x_n)$, where $F\colon \cC \to \cS$ is the functor between the underlying $\infty$-categories.
 Since $F^\otimes$ is lax, the projections induce an equivalence $F^\otimes(x_1\oplus\dots\oplus x_n) \xrightarrow{\simeq } \prod_{i=1}^n F(x_i).$ Then $F^\otimes(\beta_!)$ is equivalent to the coCartesian lift in $\cS^\times$ of $\beta$ over $F(x_1)\times \dots \times F(x_n),$ if and only if the morphism 
\begin{equation}\label{laxity}
F(x_1)\times \dots \times F(x_n)\xleftarrow{\simeq} F^\otimes_{\underline{n}_+}(x_1\oplus \dots \oplus x_n)\longrightarrow F(x_1\otimes \dots \otimes x_n) ,
\end{equation} is an equivalence.

\noindent On the other hand, by taking the fibre over $x_1\otimes \dots \otimes x_n$ of the $n$-fold algebra map of $(\cO^\otimes,\alpha^\otimes)$
we obtain the following map
\begin{equation}\label{trabi}
(\cD,\alpha)_{x_1}\times \dots \times (\cD,\alpha)_{x_n} \xleftarrow{\simeq} (\cD^\otimes,\alpha^\otimes)_{x_1\oplus \dots \oplus x_n}  \xlongrightarrow{\beta_!} (\cD,\alpha)_{x_1\otimes \dots \otimes x_n}.
\end{equation}

\noindent Since $(\cD^\otimes,\alpha^\otimes)\simeq \unst^{\cC,\otimes}(F^\otimes)$ and as observed in \Cref{fibremanch} for any object $x$ of $\cC$, there is an equivalence $(\cD,\alpha)_{x}\simeq F(x),$ one can see that the morphisms in \Cref{trabi} and \Cref{laxity} are equivalent. This means that $F^\otimes$ is strong monoidal if and only if $(\cD^\otimes, \alpha^\otimes)$ satisfies the condition in $(2)$, as wanted.

\end{proof}

\begin{deff}
	For a symmetric monoidal $\infty$-category $\cC^\otimes$, we write $\mathsf{sm}\Leftbeta_{\cC^\otimes}$ for the full $\infty$-subcategory of $\mathsf{sm}\Left_{\cC^\otimes}$ spanned by those sm-left fibrations satisfying the condition of \Cref{malaga}. 
\end{deff}

\noindent We now characterize equivalences in the category $\mathsf{sm}\Leftbeta_{\cC^\otimes}$ as those maps which are fibrewise equivalences.

\begin{prop}\label{fibrewise2}
Let $\cC^\otimes$ be a symmetric monoidal $\infty$-category, and consider a morphism of sm-left fibrations
\[
\begin{tikzcd}
\cD^\otimes \arrow[rr, "f^\otimes"] \arrow[rd, "\gamma^\otimes"']& & \cE^\otimes \arrow[ld, "\vartheta^\otimes"]\\
& \cC^\otimes & 
\end{tikzcd}
\] \noindent Then $f^\otimes$ is an equivalence in the $\infty$-category $\mathsf{sm}\Leftbeta_{\cC^\otimes}$ if and only if, for any object $x$ of $\cC$, the map between fibres $$ f_x \colon (\cD,\gamma)_x \longrightarrow (\cE, \vartheta)_x$$ is an equivalence of spaces.
\end{prop}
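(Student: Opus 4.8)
The plan is to reduce the claim to its operadic analogue \Cref{fibrewise1} by transporting the question along the forgetful functors relating the three $\infty$-categories of left fibrations over $\cC^\otimes$. First I would note that $\mathsf{sm}\Leftbeta_{\cC^\otimes}$ is by definition a \emph{full} sub-$\infty$-category of $\mathsf{sm}\Left_{\cC^\otimes}$, so $f^\otimes$ is an equivalence in $\mathsf{sm}\Leftbeta_{\cC^\otimes}$ if and only if it is one in $\mathsf{sm}\Left_{\cC^\otimes}$; it therefore suffices to characterise the equivalences of the latter.

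Next I would pass to $\Leftlax_{\cC^\otimes}$ through the forgetful functor $\mathsf{sm}\Left_{\cC^\otimes}\to \Leftlax_{\cC^\otimes}$ induced by the (non-full) inclusion $\mathsf{sm}\Cat_\infty\hookrightarrow \ell\Op_\infty$. This functor is well defined, since a strong monoidal functor whose underlying map is a left fibration is in particular a morphism of $\infty$-operads with the same property, hence an operadic left fibration in the sense of \Cref{opdleft}. The step to establish is that this functor is \emph{conservative}. This follows from the observation that in each of the slices ${\mathsf{sm}\Cat_\infty}_{/\cC^\otimes}$ and ${\ell\Op_\infty}_{/\cC^\otimes}$ a morphism is an equivalence exactly when its underlying map of $\infty$-categories over $\mathsf{Fin}_*$ is an equivalence: both $\mathsf{sm}\Cat_\infty$ and $\ell\Op_\infty$ are conservative (non-full) subcategories of ${\Cat_\infty}_{/\mathsf{Fin}_*}$, because the inverse of a strong, resp.\ lax, equivalence automatically preserves the relevant coCartesian, resp.\ inert, morphisms. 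Hence $f^\otimes$ is an equivalence in $\mathsf{sm}\Left_{\cC^\otimes}$ if and only if its image is an equivalence in $\Leftlax_{\cC^\otimes}$.

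Finally I would apply \Cref{fibrewise1} with $\cO^\otimes=\cC^\otimes$, whose underlying category is $\cC$: the image of $f^\otimes$ is an equivalence in $\Leftlax_{\cC^\otimes}$ precisely when $f_x\colon (\cD,\gamma)_x\to (\cE,\vartheta)_x$ is an equivalence of spaces for every object $x$ of $\cC$. Since the fibres occurring there are exactly the fibres of the underlying left fibrations, this matches the desired statement, and chaining the three equivalences concludes. The point requiring care is the middle step, namely that the notion of equivalence is genuinely detected on underlying $\infty$-categories in all three settings, so that forgetting from the strong to the lax structure loses no information; this is exactly what conservativity of the two inclusions into ${\Cat_\infty}_{/\mathsf{Fin}_*}$ guarantees. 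Alternatively, one could argue directly through the monoidal straightening equivalence of \Cref{monstr}, using \Cref{fibremanch} to identify the fibres $(\cD,\gamma)_x$ with the values $F(x)$ of the corresponding lax functor and reducing pointwise equivalence on all of $\cC^\otimes$ to the objects of $\cC$ via the Segal condition.
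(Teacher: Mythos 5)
Your argument is correct, but it takes a genuinely different route from the paper's. The paper also begins by reducing to $\mathsf{sm}\Left_{\cC^\otimes}$ via fullness, but then invokes the identification $\mathsf{sm}\Left_{\cC^\otimes}\simeq \mathsf{CAlg}((\Left_\cC)^\otimes)$ of \Cref{questionable} together with the fact that the forgetful functor $\mathsf{CAlg}((\Left_\cC)^\otimes)\to\Left_\cC$ is conservative, so that the statement reduces directly to the classical fibrewise criterion for left fibrations of $\infty$-categories recalled in \Cref{section2}. You instead forget only down to $\Leftlax_{\cC^\otimes}$ and quote \Cref{fibrewise1} with $\cO^\otimes=\cC^\otimes$; since \Cref{fibrewise1} is established earlier, in \Cref{section3}, there is no circularity, and your conservativity argument for the non-full inclusions of $\mathsf{sm}\Cat_\infty$ and $\ell\Op_\infty$ into ${\Cat_\infty}_{/\mathsf{Fin}_*}$ (an equivalence over $\mathsf{Fin}_*$ preserves and reflects coCartesian, hence inert, morphisms, so its inverse again lies in the subcategory, and equivalences in a slice are detected by the projection) is sound. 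The trade-off is that your route rests on \Cref{fibrewise1}, and hence on the dendroidal comparison of \Cref{luppolo} and the corresponding dendroidal fibrewise criterion, whereas the paper's argument stays entirely on the monoidal side and uses only the standard conservativity of the forgetful functor from commutative algebras; the paper's proof is therefore lighter and more self-contained for this particular statement, while yours has the merit of making the compatibility between the operadic and the symmetric monoidal fibrewise criteria explicit. Your suggested alternative via \Cref{monstr} and \Cref{fibremanch} is closer in spirit to the paper's, though the paper does not need the straightening equivalence itself here, only the algebra identification.
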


\begin{proof}
The $\infty$-category $\mathsf{sm}\Leftbeta_{\cC^\otimes}$ is a full subcategory of $\mathsf{sm}\Left_{\cC^\otimes}$, hence it suffices to prove that $f$ is an equivalence in this latter. As $ \mathsf{sm}\Left_{\cC^\otimes}\simeq \mathsf{CAlg}(\Left_{\cC}^\otimes)$, the equivalences in $ \mathsf{sm}\Left_{\cC^\otimes}$ are detected by the forgetful functor $$\mathsf{CAlg}(\Left_{\cC}^\otimes)\to \Left_{\cC}, \quad (\cF^\otimes, \eta^\otimes)\mapsto (\cF,\eta),$$ and this implies that $f^\otimes$ is an equivalence if and only if $f\colon (\cD, \gamma)\to (\cE, \vartheta)$ is an equivalence in $\Left_{\cC}$, which is true if and only if, for any $x\in \cC$, the morphism between fibres $$f_x\colon (\cD,\gamma)_x\to (\cE, \vartheta)_x$$ is an equivalence, and this concludes the argument.
\end{proof}

\section{Symmetric monoidal envelope of operadic left fibrations}\label{smenv}

\noindent We now work with the symmetric monoidal envelope of an $\infty$-operad and the induced adjunction between slice categories; we recall the definitions in \Cref{round1}. In \Cref{ranianna}, we prove that strong sm-left fibrations are equivalent to operadic left fibrations in two steps: first, showing the symmetric monoidal envelope defines a fully faithful left adjoint in \Cref{pollo}, then proving its essential image coincides with strong sm-left fibrations in \Cref{kriku}, relying on \Cref{luppolo}.

\subsection{Symmetric monoidal envelope}\label{round1}

\noindent Let $\mathsf{Act}( \mathsf{Fin_*})$ be the nerve of the full subcategory of $\mathsf{Fun}([1],\mathsf{Fin}_*)$ spanned by active morphisms. We recall the definition of Lurie monoidal envelope of an $\infty$-operad (\cite[\S 2.2.4]{Lu:HA}).

\begin{deff}
Let $\cO^\otimes \to \mathsf{Fin_*}$ be an $\infty$-operad. We write $\env(\cO)^\otimes$ for the fibre product $$ \cO^\otimes \times_{\mathsf{Fun}(\{0\}, \mathsf{Fin_*})} \mathsf{Act}( \mathsf{Fin_*}).$$ The target inclusion $\Delta^0 \xrightarrow{\{1\}} \Delta^1$ induces a map $\env(\cO)^\otimes \to \mathsf{Fin}_*$, so $\env(-)^\otimes$ defines a functor $$ \env(-)^\otimes \colon \mathsf{Op}_\infty \longrightarrow {\mathsf{Cat}_\infty}_{/\mathsf{Fin}_*}.$$
\noindent Denote by $\env(\cO)$ the fibre of $\env(\cO)^\otimes \to \mathsf{Fin}_*$ over $\underline{1}_+$.
\end{deff}

\noindent The symmetric monoidal envelope of a $\infty$-operad can be characterized via the following universal property.

\begin{prop}[{ \cite[Proposition 2.2.4.9]{Lu:HA}}]\label{freepalestine}
	The envelope functor $\env(-)^\otimes$ takes values in the $\infty$-category of symmetric monoidal $\infty$-categories with strong monoidal functors and is left adjoint to the forgetful functors. In symbols, $$\adjunction{\env(-)^\otimes}{\ell\Op_\infty}{\mathsf{smCat}_\infty}{U}.$$
\end{prop}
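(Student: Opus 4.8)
The plan is to verify three assertions in turn: that the target projection $\pi\colon \env(\cO)^\otimes\to\mathsf{Fin}_*$ makes $\env(\cO)^\otimes$ a symmetric monoidal $\infty$-category, that $\env(-)^\otimes$ is functorial into $\mathsf{smCat}_\infty$, and that it is left adjoint to $U$, which I would establish by exhibiting an explicit unit and checking the resulting universal property. First I would show $\pi$ is a coCartesian fibration. An object of $\env(\cO)^\otimes$ is a pair $(C,\alpha)$ with $C\in\cO^\otimes$, $p(C)=\underline m_+$, and $\alpha\colon\underline m_+\to\underline n_+$ active, and $\pi(C,\alpha)=\underline n_+$. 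Given $v\colon\underline n_+\to\underline k_+$, I would build a coCartesian lift by factoring the composite $v\circ\alpha$ through the inert--active factorization system as $\underline m_+\xrightarrow{\iota}\underline{m'}_+\xrightarrow{\alpha'}\underline k_+$ with $\iota$ inert and $\alpha'$ active, using condition $(1)$ of \Cref{defi1} to form the inert coCartesian pushforward $\iota_!\colon C\to C'$ in $\cO^\otimes$; the edge $(C\xrightarrow{\iota_!}C',v)\colon(C,\alpha)\to(C',\alpha')$ is the desired $\pi$-coCartesian lift. The only inputs are the inert-lift condition and the composability of active maps. Since a coCartesian fibration automatically satisfies condition $(1)$, to conclude that $\pi$ is symmetric monoidal in the sense of \Cref{ihp} it then suffices to verify the Segal condition $(2)$, i.e. that $\env(\cO)^\otimes_{\underline n_+}\to\prod_{i=1}^n\env(\cO)^\otimes_{\underline 1_+}$ is an equivalence; I would check this by decomposing a pair $(C,\alpha)$ along the fibres $\alpha^{-1}(i)$ and applying the Segal condition for $\cO^\otimes$ itself.

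Second, for functoriality, a morphism $f\colon\cO^\otimes\to\cO'^\otimes$ in $\ell\Op_\infty$ induces $\env(f)\colon\env(\cO)^\otimes\to\env(\cO')^\otimes$ by functoriality of the defining pullback, automatically over $\mathsf{Fin}_*$. The point is that $\env(f)$ is \emph{strong} monoidal: since the $\pi$-coCartesian edges built above are assembled from the inert coCartesian edges of $\cO^\otimes$, and $f$ preserves inert morphisms by definition of a morphism of $\infty$-operads, $\env(f)$ preserves all $\pi$-coCartesian edges, hence lands in $\mathsf{smCat}_\infty$. For the adjunction I would then establish the universal property directly. The unit $\eta_\cO\colon\cO^\otimes\to\env(\cO)^\otimes$ sends $C\mapsto(C,\id_{p(C)})$; it lies over $\mathsf{Fin}_*$ and preserves inert edges, so it is a morphism of $\infty$-operads. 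I would show that for any symmetric monoidal $\cC^\otimes$ restriction along $\eta_\cO$ induces an equivalence
$$\mathsf{Fun}^{\text{str}}(\env(\cO)^\otimes,\cC^\otimes)\xrightarrow{\ \eta_\cO^*\ }\alg_{\cO^\otimes}(\cC^\otimes),$$
from which the adjunction follows by passing to maximal sub-$\infty$-groupoids.

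For essential surjectivity, given a morphism of operads $\phi\colon\cO^\otimes\to\cC^\otimes$ I would define the candidate extension $\bar\phi$ by the pushforward formula $(C,\alpha)\mapsto\alpha_!\,\phi(C)$, using that every morphism of $\cC^\otimes$ admits a coCartesian lift; then $\eta_\cO^*\bar\phi\simeq\phi$ since $\id_!\simeq\id$. For the converse, the key observation is that for each $(C,\alpha)$ the edge $(\id_C,\alpha)\colon(C,\id)\to(C,\alpha)$ is $\pi$-coCartesian, so any strong monoidal $G$ sends it to a coCartesian edge of $\cC^\otimes$ and therefore satisfies $G(C,\alpha)\simeq\alpha_!\,G(C,\id)\simeq\alpha_!(\eta_\cO^*G)(C)$; this forces $G\simeq\overline{\eta_\cO^*G}$ and exhibits $G$ as determined by its restriction.

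The main obstacle is upgrading these object-level identifications to a genuine equivalence of functor $\infty$-categories (equivalently, of mapping spaces), i.e. controlling morphisms and all higher cells rather than objects alone. I expect to handle this with the coCartesian-fibration calculus: the assignment $(C,\alpha)\mapsto\alpha_!\,\phi(C)$ should be realized as a relative left Kan extension of $\phi$ along $\eta_\cO$ over $\mathsf{Fin}_*$, with strong monoidal functors out of $\env(\cO)^\otimes$ characterized precisely as those $\mathsf{Fin}_*$-relative functors left Kan extended from $\cO^\otimes$. Making the naturality and the contractibility of the space of such extensions precise --- rather than merely exhibiting a bijection on equivalence classes of objects --- is the delicate part, and is where the full relative-adjunction machinery of \cite{Lu:HTT, Lu:HA} enters.
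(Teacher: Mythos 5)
The paper does not prove this statement: it is quoted verbatim from \cite[Proposition 2.2.4.9]{Lu:HA} (together with \cite[Proposition 2.2.4.4]{Lu:HA} for the assertion that $\env(\cO)^\otimes\to\mathsf{Fin}_*$ is a coCartesian fibration), so there is no in-paper argument to compare against. Judged against Lurie's actual proof, your sketch reproduces its skeleton correctly: the $\pi$-coCartesian lift of $v$ at $(C,\alpha)$ via the inert--active factorization of $v\circ\alpha$, the unit $C\mapsto(C,\id_{p(C)})$, and the observation that $(\id_C,\alpha)\colon(C,\id)\to(C,\alpha)$ is $\pi$-coCartesian so that a strong monoidal functor is forced to take the value $\alpha_!\,G(C,\id)$ --- these are exactly the constructions underlying Lurie's argument, and the comma category $\cO^\otimes\times_{\env(\cO)^\otimes}\env(\cO)^\otimes_{/(C,\alpha)}\simeq\cO^\otimes_{/C}$ does have a terminal object, so the pointwise relative Kan extension formula you propose gives the right answer. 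Two remarks. First, to conclude that $\env(\cO)^\otimes$ is a symmetric monoidal $\infty$-category in the sense of \Cref{ihp} you must also address condition $(3)$ of \Cref{defi1}; for a coCartesian fibration it follows from condition $(2)$, but you should say so. Second, and more importantly, the gap you flag at the end is not a formality: identifying $\mathsf{Fun}^{\text{str}}(\env(\cO)^\otimes,\cC^\otimes)$ with the functors that are (operadic) left Kan extended along $\eta_\cO$, and showing the restriction map is an equivalence rather than merely essentially surjective and conservative on objects, is where essentially all of Lurie's proof lives --- it is carried out through a chain of auxiliary simplicial sets and trivial Kan fibration arguments (the lemmas following 2.2.4.9 in \cite{Lu:HA}), not by a one-line appeal to relative adjunctions. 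So your proposal is a correct and well-organized plan, but as written it defers the genuinely hard step rather than resolving it; for the purposes of this paper the honest move is the one the author makes, namely to cite Lurie.
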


\noindent Equivalently, we can reformulate the above result by saying that, for any $\infty$-operad $\cO^\otimes$ and any symmetric monoidal $\infty$-category $\cV^\otimes$, there is a natural equivalence of $\infty$-categories $$ \alg_{\cO}(\cV)\simeq \mathsf{Fun}^{\text{str}}(\env(\cO)^\otimes, \cV^\otimes).$$ 

\begin{rmk}\label{descriptionenv}
	There is an equivalence of $\infty$-categories $$\env(\cO)\simeq \cO^{\otimes,act},$$ where $\cO^{\otimes,act}$ is the wide subcategory of $\cO^\otimes$ spanned by active morphisms; in particular,
	an object of $\env(\cO)$ corresponds to an element $\underline{c}\in \cO^\otimes_{\underline{m}_+}$, that is, a list of objects of $\cO$.

	\noindent More generally, given $n\geq 0$, an object $x$ in the fibre $\env(\cO)^\otimes_{\underline{n}_+}$ writes as  $x\simeq (\underline{c}, \alpha)$, with $\underline{c}$ in $ \cO^\otimes_{\underline{m}_+}$ for some $m\geq 0$ and $\alpha$ an active morphism $\alpha\colon \underline{m}_+\to \underline{n}_+$. The Segal condition can be expressed in the following way: given elements $\underline{d}^1,\dots,\underline{d}^n$ in $\env(\cO)$, $\underline{d}^i \in \cO^\otimes_{\underline{m_i}_+}$ for some $\underline{m_i}'s$, the object $ \underline{d}^1\oplus\dots\oplus \underline{d}^n$ in $ \env(\cO)^\otimes_{\underline{n}_+}$ corresponds to $$  \underline{d}^1\oplus\dots\oplus \underline{d}^n \simeq ( \underline{d}, \alpha\colon \underline{m}_+\to \underline{n}_+),$$ where $\underline{d}\simeq \underline{d}^1\oplus \dots \oplus \underline{d}^n $ as an element of $ \cO^\otimes_{\underline{m}_+}\simeq \underset{i=1}{\overset{n}{\prod}}\cO^\otimes_{\underline{m^i}_+}$ and $\alpha$ appropriately partitions $\underline{m}_+$. If we regard an object of $\env(\cO)$ as a list of objects of $\cO$, the tensor product in the envelope consists in the concatenation of lists: given $\underline{d}^i$'s as above, we represent the tensor product of the $\underline{d}^i$'s by the morphism 
	$$\beta_!\colon   \underline{d}^1\oplus\dots\oplus\underline{d}^n \to \underline{d}^1\otimes \dots\otimes\underline{d}^n $$
	$$ (\underline{d}, \alpha\colon \underline{m}_+\to \underline{n}_+)\xlongrightarrow{(\id,\beta)} (\underline{d}, \beta\colon \underline{m}_+ \to \underline{1}_+).$$ 

\end{rmk}

\noindent Neither the symmetric monoidal envelope nor the forgetful functor of \Cref{freepalestine} are fully faithful. However, the functor $\env(-)^\otimes $ becomes fully faithful when seen as a functor between over-$\infty$-categories. The following proposition is a consequence of  \cite[Proposition 2.4.3]{HK:IOSMIC}.

\begin{prop}\label{nonna} Let $\cO^\otimes$ be a $\infty$-operad, and consider the adjunction
	$$ \adjunction{\env(-)^\otimes}{{\ell\Op_\infty}_{/\cO^\otimes}}{{\mathsf{sm}\Cat_\infty}_{/\env(\cO)^\otimes}}{G'},$$ where $G'$ is the base-change along the unit after having applied $U$. The symmetric monoidal envelope, seen as a functor $\env(-)^\otimes \colon {\ell\Op_\infty}_{/\cO^\otimes}\to {\mathsf{sm}\Cat_\infty}_{/\env(\cO)^\otimes}$, is fully faithful.
\end{prop}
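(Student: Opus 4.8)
The plan is to exploit the standard fact that a left adjoint is fully faithful if and only if the unit of the adjunction is a natural equivalence; since $\env(-)^\otimes$ is the left adjoint of the sliced adjunction in the statement, it suffices to prove that its unit is an equivalence at every object.

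First I would unwind the construction of the sliced adjunction as recalled in \Cref{adjslice}. Since $\ell\Op_\infty$ admits pullbacks, the right adjoint $G'$ sends an object $(\cD^\otimes, q)$ of ${\mathsf{sm}\Cat_\infty}_{/\env(\cO)^\otimes}$ to the base change $\cO^\otimes \times_{U\env(\cO)^\otimes} U\cD^\otimes$ of its underlying Lurie $\infty$-operad along the unit $\eta_\cO \colon \cO^\otimes \to U\env(\cO)^\otimes$ of the envelope adjunction of \Cref{freepalestine}, while the left adjoint simply applies $\env(-)^\otimes$. Tracing through these identifications, the component of the unit at an object $(\cP^\otimes, p)$ of ${\ell\Op_\infty}_{/\cO^\otimes}$ is the canonical comparison map
$$\cP^\otimes \longrightarrow \cO^\otimes \times_{U\env(\cO)^\otimes} U\env(\cP)^\otimes$$
classified by $p \colon \cP^\otimes \to \cO^\otimes$ together with the unit $\eta_\cP \colon \cP^\otimes \to U\env(\cP)^\otimes$, which are compatible thanks to the naturality of $\eta$. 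Hence the statement is equivalent to the assertion that, for every morphism $p$ of Lurie $\infty$-operads, the naturality square of the envelope unit
\[
\begin{tikzcd}
\cP^\otimes \arrow[r, "\eta_\cP"] \arrow[d, "p"'] & U\env(\cP)^\otimes \arrow[d, "U\env(p)^\otimes"] \\
\cO^\otimes \arrow[r, "\eta_\cO"'] & U\env(\cO)^\otimes
\end{tikzcd}
\]
is cartesian.

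Finally I would establish this cartesianness, which is the crux of the argument and is precisely what we extract from \cite[Proposition 2.4.3]{HK:IOSMIC}. The underlying heuristic, visible from the explicit description of the envelope in \Cref{descriptionenv}, is that $\eta_\cO$ realizes $\cO^\otimes$ as the part of $\env(\cO)^\otimes$ on the identity-active objects $(\underline{c}, \id)$, so that base-changing $U\env(\cP)^\otimes$ along $\eta_\cO$ should carve out exactly the identity-active part of $\env(\cP)^\otimes$, namely $\cP^\otimes$. On objects this identification is immediate; the genuine difficulty, and the place where \cite[Proposition 2.4.3]{HK:IOSMIC} is indispensable, is to control all mapping spaces and higher simplices, that is, to see that the morphisms of $\env(\cP)^\otimes$ lying over the identity-active part of $\env(\cO)^\otimes$ are exactly those coming from $\cP^\otimes$, a verification that requires understanding how the envelope behaves on non-active maps. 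Granting the cartesian square, the unit is a natural equivalence and full faithfulness follows.
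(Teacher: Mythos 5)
Your argument is correct and follows essentially the same route as the paper, which offers no proof of its own beyond the remark that the proposition ``is a consequence of \cite[Proposition 2.4.3]{HK:IOSMIC}'': your reduction of full faithfulness of the sliced left adjoint to the invertibility of the unit, and then to the cartesianness of the naturality square of the envelope unit, is exactly the standard way to extract the statement from that citation, since the cited result of Haugseng--Kock is precisely that this square is a pullback. The only thing worth flagging is that your appeal to \Cref{adjslice} tacitly uses that ${\ell\Op_\infty}$ admits the relevant pullbacks, which holds but deserves a word.
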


\subsection{Symmetric monoidal envelope of operadic left fibrations}\label{ranianna}

\begin{lemma}\label{pollo}

For any $\infty$-operad $\cO^\otimes$, the adjunction in \Cref{nonna} restricts to an adjunction
$$\adjunction{\env(-)^\otimes}{\Leftlax_{\cO^\otimes} }{\mathsf{sm}\Left_{\env(\cO)^\otimes}}{G}.$$
\end{lemma}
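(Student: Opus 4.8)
The plan is to invoke the general principle that an adjunction $\adjunction{F}{\cA}{\cB}{G}$ restricts to an adjunction between full subcategories $\cA_0\subseteq \cA$ and $\cB_0\subseteq \cB$ as soon as $F(\cA_0)\subseteq \cB_0$ and $G(\cB_0)\subseteq \cA_0$: fullness then yields, for $a\in \cA_0$ and $b\in \cB_0$, natural equivalences $\mathsf{Map}_{\cB_0}(Fa,b)\simeq \mathsf{Map}_{\cB}(Fa,b)\simeq \mathsf{Map}_{\cA}(a,Gb)\simeq \mathsf{Map}_{\cA_0}(a,Gb)$, which exhibit the restricted functors as adjoint. Since $\Leftlax_{\cO^\otimes}$ and $\mathsf{sm}\Left_{\env(\cO)^\otimes}$ are by definition full subcategories of ${\ell\Op_\infty}_{/\cO^\otimes}$ and ${\mathsf{sm}\Cat_\infty}_{/\env(\cO)^\otimes}$, it suffices to verify the two containment conditions for the adjunction of \Cref{nonna}. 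I use throughout that being a left fibration is a property of the underlying functor of $\infty$-categories, so that every verification reduces, along the forgetful functors to $\Cat_\infty$, to the classical stability of left fibrations under base change.

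For the left adjoint, let $f\colon \cX^\otimes\to \cO^\otimes$ be an operadic left fibration. The functor $\env(f)^\otimes$ is strong monoidal since $\env(-)^\otimes$ lands in $\mathsf{sm}\Cat_\infty$ by \Cref{freepalestine}, so it only remains to check that its underlying functor is a left fibration. Because $\env(-)^\otimes$ is the fibre product $(-)\times_{\mathsf{Fin}_*}\mathsf{Act}(\mathsf{Fin}_*)$ with respect to evaluation at $0$, and $f$ lies over $\mathsf{Fin}_*$, the pasting law for pullbacks gives a canonical equivalence $\env(\cX)^\otimes\simeq \cX^\otimes\times_{\cO^\otimes}\env(\cO)^\otimes$, where $\env(\cO)^\otimes\to \cO^\otimes$ is the source projection. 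Consequently the underlying functor of $\env(f)^\otimes$ is the base change of that of $f$ along this projection, a pullback already formed in $\Cat_\infty$. As $f$ has underlying left fibration and these are stable under base change, so does $\env(f)^\otimes$, whence $\env(f)^\otimes\in \mathsf{sm}\Left_{\env(\cO)^\otimes}$.

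For the right adjoint, recall from \Cref{nonna} that $G'(\cE^\otimes,g)$ is computed by applying the forgetful functor and then base-changing along the unit $\eta\colon \cO^\otimes\to \env(\cO)^\otimes$; thus the underlying structure map of $G'(\cE^\otimes,g)$ is the base change of the underlying functor of $g$ along $\eta$. If $(\cE^\otimes,g)$ is an sm-left fibration, then $g$ has underlying left fibration, and base-change stability again shows that $G'(\cE^\otimes,g)$ does too; since $G'$ already takes values in ${\ell\Op_\infty}_{/\cO^\otimes}$, we conclude $G'(\cE^\otimes,g)\in \Leftlax_{\cO^\otimes}$. With both containments in hand, the restriction principle produces the desired adjunction.

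The step requiring the most care is the cartesian square for the envelope: one must keep track of the fact that the projection $\env(\cO)^\otimes\to \cO^\otimes$ along which $\env(f)^\otimes$ is a base change is the source map (evaluation at $0$), which is \emph{not} the structure map of $\env(\cO)^\otimes$ over $\mathsf{Fin}_*$ (evaluation at $1$), and only then apply the pasting law. A secondary point to confirm is that the pullback defining $G'$, a priori taken in $\ell\Op_\infty$, is computed on underlying $\infty$-categories, i.e. that the forgetful functor $\ell\Op_\infty\to \Cat_\infty$ preserves it; this holds because $\ell\Op_\infty$ is closed under pullbacks inside ${\Cat_\infty}_{/\mathsf{Fin}_*}$, where pullbacks are computed as in $\Cat_\infty$. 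Granting these two compatibilities, the whole argument reduces to base-change stability of left fibrations.
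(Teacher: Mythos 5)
Your proposal is correct and follows essentially the same route as the paper: both reduce to checking that the two functors preserve the full subcategories, identify $\env(f)^\otimes$ as a base change of $f$ via the pasting law for the defining pullback squares of the envelope, and use stability of left fibrations under pullback for both directions. Your extra care about the source projection versus the structure map over $\mathsf{Fin}_*$, and about pullbacks in $\ell\Op_\infty$ being computed on underlying $\infty$-categories, are sound refinements of the same argument rather than a different approach.
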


\begin{proof} 
The categories $ \Leftlax_{\cO^\otimes}$, resp. $ \mathsf{sm}\Left_{\env(\cO)^\otimes}$, are full subcategories of ${\ell\mathsf{Op}_\infty}_{/\cO^\otimes}$, resp. ${\mathsf{smCat}_\infty}_{/\env(\cO)^\otimes}$, so we only need to see that the restrictions of $\env(-)^\otimes$ and $G'$ are well defined. 

\noindent To be completely explicit, recall that, given a sm-left fibration $(\cT^\otimes,\alpha^\otimes)$, its image via $G'$ is the left vertical arrow in the following pullback diagram:
\begin{center}
	\begin{tikzcd}
		\cO^\otimes \times_{\env(\cO)^\otimes}\cT^\otimes \arrow[d, "{G'(\cT^\otimes,\alpha^\otimes)}"'] \arrow[r] & \cT^\otimes \arrow[d, "\alpha^\otimes"] \\
		\cO^\otimes  \arrow[r, "\iota_\cO"']                                                               & \env(\cO)^\otimes                      
\end{tikzcd}\end{center}

\noindent Let $(\cP^\otimes, f)$ be an operadic left fibration. There is a commutative diagram
\begin{center}
\begin{tikzcd}
\env(\cP)^\otimes \arrow[d, "\env(f)^\otimes"'] \arrow[r] & \cP^\otimes \arrow[d, "f"] \\
\env(\cO)^\otimes \arrow[d] \arrow[r] & \cO^\otimes \arrow[d]\\
\mathsf{Act}(\mathsf{Fin_*}) \arrow[r] & \mathsf{Fin_*}  \ ,
\end{tikzcd}
\end{center}
\noindent where the outer square and the lower square are cartesian. By the pasting lemma, the upper square is cartesian as well, and since left fibrations are stable under pullback, $\env(f)^\otimes$ is also a left fibration. This means that $\env(\cP^\otimes, f^\otimes)\simeq (\env(\cP)^\otimes, \env(f)^\otimes)$ belongs to $\mathsf{sm}\Left_{\env(\cO)^\otimes}$, as wanted.

\noindent Given a sm-left fibration $(\cT^\otimes, \varphi^\otimes)$ in $\mathsf{sm}\Left_{\env(\cO)^\otimes}$, the fact that left fibrations of $\infty$-categories are stable under pullback ensures that the morphism $G(\cT^\otimes,\varphi^\otimes)$ is also a left fibration in $\Cat_\infty$. As it was already an element in $\ell\Op_\infty$, we conclude that $G(\cT^\otimes,\varphi^\otimes)$ is an operadic left fibration. This concludes the proof.
\end{proof}

\noindent We refine the previous result, and showing that the symmetric monoidal envelope establish an equivalence between operadic left fibrations and \emph{strong} sm-left fibrations.

\begin{prop}\label{kriku}
For any $\infty$-operad $\cO^\otimes$, the adjunction in \Cref{pollo} restricts to an adjunction of $\infty$-categories $$\adjunction{\env(-)^\otimes}{\Leftlax_{\cO^\otimes} }{\mathsf{sm}\Leftbeta_{\env(\cO)^\otimes}}{G}.$$ Moreover, this adjunction is an equivalence.
\end{prop}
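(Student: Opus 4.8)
The plan is to reduce the statement to a standard adjunction criterion: I will show that the left adjoint $\env(-)^\otimes$ actually lands in $\mathsf{sm}\Leftbeta_{\env(\cO)^\otimes}$ and is fully faithful there, and that the right adjoint $G$ is conservative on $\mathsf{sm}\Leftbeta_{\env(\cO)^\otimes}$. Here I invoke the general fact that an adjunction $L \dashv R$ with $L$ fully faithful and $R$ conservative is automatically an adjoint equivalence: full faithfulness of $L$ makes the unit $\eta$ invertible, the triangle identity $R\epsilon \circ \eta R = \id_R$ then forces $R\epsilon$ to be invertible, and conservativity of $R$ makes the counit $\epsilon$ invertible. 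That the right adjoint $G$ restricts is free, since $\mathsf{sm}\Leftbeta_{\env(\cO)^\otimes} \subseteq \mathsf{sm}\Left_{\env(\cO)^\otimes}$ and \Cref{pollo} already provides $G\colon \mathsf{sm}\Left_{\env(\cO)^\otimes} \to \Leftlax_{\cO^\otimes}$. Full faithfulness of the restricted $\env(-)^\otimes$ is immediate from \Cref{nonna}, because $\Leftlax_{\cO^\otimes}$ and $\mathsf{sm}\Leftbeta_{\env(\cO)^\otimes}$ are full subcategories of the slices on which the envelope is already fully faithful. It thus remains to check that the left adjoint factors through $\mathsf{sm}\Leftbeta$, and that $G$ is conservative.

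For the factorization, let $(\cP^\otimes, f) \in \Leftlax_{\cO^\otimes}$; I must verify that $(\env(\cP)^\otimes, \env(f)^\otimes)$ satisfies condition $(2)$ of \Cref{malaga}. The key input is the explicit description in \Cref{descriptionenv}: an object of $\env(\cO)^\otimes_{\underline{n}_+}$ of the form $x = x_1 \oplus \dots \oplus x_n$ is a pair $(\underline{d}, \alpha)$, and its image $x_1 \otimes \dots \otimes x_n$ under $\beta_!$ is the pair $(\underline{d}, \beta)$ with the \emph{same} underlying list $\underline{d} \in \cO^\otimes_{\underline{m}_+}$, only the active structure map changing from $\alpha$ to $\beta$. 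Since the fibre of $\env(f)^\otimes$ over $(\underline{d}, -)$ depends only on $\underline{d}$, being exactly the fibre $(\cP^\otimes)_{\underline{d}}$ of $f^\otimes$, and since $\beta_!$ lifts to the identity on the $\cP^\otimes$-component, the induced transport on fibres is the identity of $(\cP^\otimes)_{\underline{d}}$, hence an equivalence. This verifies condition $(2)$ of \Cref{malaga}, so $\env(-)^\otimes$ does factor through $\mathsf{sm}\Leftbeta_{\env(\cO)^\otimes}$.

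The crux is the conservativity of $G$, and this is where \Cref{luppolo} enters through its corollary \Cref{fibrewise1}. Let $f^\otimes \colon (\cT^\otimes, \varphi^\otimes) \to (\cE^\otimes, \psi^\otimes)$ be a morphism in $\mathsf{sm}\Leftbeta_{\env(\cO)^\otimes}$ with $G(f^\otimes)$ an equivalence. Since the unit $\iota_\cO \colon \cO^\otimes \to \env(\cO)^\otimes$ sends $c \in \cO$ to the length-one list $(c) \in \env(\cO)$, the base change identifies the fibre of $G(\cT^\otimes,\varphi^\otimes)$ over $c$ with the fibre of $\cT^\otimes$ over $(c)$; by \Cref{fibrewise1}, $G(f^\otimes)$ being an equivalence says precisely that $f^\otimes$ is a fibrewise equivalence over every length-one list. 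To promote this to all objects of $\env(\cO)$, I use that both source and target are \emph{strong}: for a list $\underline{c} = (c_1, \dots, c_m) = (c_1) \otimes \dots \otimes (c_m)$, the condition of \Cref{malaga} together with the Segal equivalence of \Cref{amsterdam} identifies the fibre over $\underline{c}$ with the product $\prod_{i=1}^m (\cT)_{(c_i)}$ of fibres over length-one lists (and similarly for $\cE$), so that $f^\otimes_{\underline{c}} \simeq \prod_{i=1}^m f^\otimes_{(c_i)}$ is an equivalence. Hence $f^\otimes$ is a fibrewise equivalence over every object of $\env(\cO)$, and by \Cref{fibrewise2} it is an equivalence, proving $G$ conservative. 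The main obstacle I anticipate is bookkeeping rather than conceptual: one must make the fibre identifications over lists genuinely natural in $f^\otimes$, so that $f^\otimes_{\underline{c}}$ really is the product of the $f^\otimes_{(c_i)}$; once that naturality is pinned down, the remaining argument is formal.
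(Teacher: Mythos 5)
Your proposal is correct and follows essentially the same route as the paper: full faithfulness of the envelope from \Cref{nonna}, conservativity of $G$ via the fibrewise characterizations of \Cref{fibrewise1} (hence \Cref{luppolo}) and \Cref{fibrewise2}, and the standard ``fully faithful left adjoint plus conservative right adjoint implies equivalence'' criterion. The only local difference is your verification that $\env(\cP)^\otimes$ lands in $\mathsf{sm}\Leftbeta_{\env(\cO)^\otimes}$: where the paper decomposes both fibres through inert transports and \Cref{amsterdam} to reduce to a Segal map $\psi$, you identify the fibres of $\env(f)^\otimes$ over $(\underline{d},\alpha)$ and $(\underline{d},\beta)$ directly with $(\cP^\otimes)_{\underline{d}}$ (using that $\env(f)^\otimes$ is a base change of $f$) and note that the $\beta_!$-transport is the identity under this identification --- a shorter argument reaching the same conclusion.
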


\begin{proof} Consider a operadic left fibration $(\cD^\otimes, \alpha^\otimes)$. To see that its image $(\env(\cD)^\otimes, \env(\alpha)^\otimes)$ belongs to $\mathsf{sm}\Leftbeta_{\env(\cO)^\otimes}$, we apply \Cref{malaga} and check that, for any $\underline{c} \in \env(\cO)^\otimes_{\underline{n}_+}$, $\underline{c}\simeq \underline{c}^1 \oplus\dots\oplus\underline{c}^n$ for some $\underline{c}^i \in \cO^\otimes_{\underline{m_i}_+}\subseteq \env(\cO)$, the morphism
$$ \beta_!\colon (\env(\cD)^\otimes, \env(\alpha)^\otimes)_{\underline{c}^1 \oplus\dots\oplus\underline{c}^n }\longrightarrow (\env(\cD),\env(\alpha))_{\underline{c}^1 \otimes\dots\otimes\underline{c}^n}$$ induced by the coCartesian lifts of $\beta_!\colon \underline{c}^1 \oplus\dots\oplus\underline{c}^n  \to \underline{c}^1 \otimes\dots\otimes\underline{c}^n $ is an equivalence.

\noindent Since the fibres of $\alpha^\otimes \colon \cD^\otimes\to \cO^\otimes$ lie in $\cD^{\otimes,act}$ and $\env(\cD)\simeq \cD^{\otimes,act}$, for any $z\in \cO^\otimes_{\underline{m}_+}\subseteq \env(\cO)$ there is an equivalence $$(\env(\cD),\env(\alpha))_z\simeq (\cD^{\otimes,act},\alpha^{\otimes,act})_z \simeq (\cD^\otimes,\alpha^\otimes)_z.$$

\noindent  By \Cref{amsterdam}, the inert maps $\underline{c}\to \underline{c}^i$ in $\env(\cO)^\otimes$ induce an equivalence 
$$ (\env(\cD)^\otimes, \env(\alpha)^\otimes)_{\underline{c}^1 \oplus\dots\oplus\underline{c}^n }\xrightarrow{\sim} \prod_{i=1}^n  (\env(\cD), \env(\alpha))_{\underline{c}^i}\simeq \prod_{i=1}^n(\cD^\otimes,\alpha^\otimes)_{\underline{c}^i}.$$ As a consequence, $\beta_!$ fits into the commutative diagram 

\[
\begin{tikzcd}
(\env(\cD)^\otimes,\env(\alpha)^\otimes)_{\underline{c}^1 \oplus\dots\oplus\underline{c}^n } \arrow[d, "\sim"]\arrow[rr, "\beta_!"]& & (\env(\cD),\env(\alpha))_{\underline{c}^1 \otimes\dots\otimes\underline{c}^n}\arrow[d,"\simeq"] \\
  \prod_{i=1}^n  (\env(\cD),\env(\alpha))_{\underline{c}^i}  \arrow[r,"\simeq"]& \prod_{i=1}^n (\cD^{\otimes},\alpha^\otimes)_{\underline{c}_i} & \arrow[l, "\psi"'](\cD^{\otimes},\alpha^\otimes)_{\underline{c}^1\oplus\dots \oplus \underline{c}^n}
\end{tikzcd}
\]
\noindent and $\beta_!$ is an equivalence if and only if $\psi$ is, so let us prove this last fact. For every $i\in \{1,\dots,n\}$, we can decompose $\underline{c}^i$ as an object in $\cO^\otimes$ and write $\underline{c}^i\simeq c^i_{j_1}\oplus\dots\oplus c^i_{j_i}$ for some $c^i_j \in \cO$, and by \Cref{amsterdam} the coCartesian lifts of inerts in $\cO$ induce an equivalence $$(\cD^\otimes,\alpha^\otimes)_{\underline{c}^1\oplus\dots\oplus\underline{c}^n}\xlongrightarrow{\sim} \prod_{i,j} (\cD,\alpha)_{c^i_j}.$$ 
This equivalence factors through $\psi$, written as the composition $$ (\cD^\otimes,\alpha^\otimes)_{\underline{c}^1\oplus\dots\oplus\underline{c}^n}\xlongrightarrow{\psi} \prod_{i=1}^n (\cD^\otimes,\alpha^\otimes)_{\underline{c}^i}\xlongrightarrow{\simeq} \prod_{i,j} (\cD,\alpha)_{c^i_j},$$ where the second map is an equivalence again because of \Cref{amsterdam}, so we conclude that $\psi$ is an equivalence. This means that $G(\cD^\otimes,\alpha^\otimes)$ lies in $\mathsf{sm}\Leftbeta_{\env(\cO)^\otimes}$, as wanted. 

\noindent We have shown that the adjunction in the statement is well defined, so let us now prove that it is an equivalence of $\infty$-categories. 

\noindent From \cite[Proposition 2.4.3.]{HK:IOSMIC}, the symmetric monoidal envelope is a fully faithful functor when regarded as a functor $\env(-)^\otimes \colon \ell\Op_\infty\simeq {\ell\Op_\infty}_{/\mathsf{Comm}^\otimes} \longrightarrow {\mathsf{sm}\Cat_\infty}_{/\env(\mathsf{Comm})^\otimes}$. In particular, its restriction $\env(-)^\otimes\colon \Leftlax_{\cO^\otimes} \longrightarrow \mathsf{sm}\Left_{\env(\cO)^\otimes}$ is fully faithful as well.

\noindent By adjunction, to show that $\env(-)^\otimes$ is also essentially surjective, it is enough to prove that $G$ is conservative, that is, it reflects weak equivalences.

\noindent Consider a morphism of sm-left fibrations $f\colon X \to Y$ such that $G(f)\colon GX \to GY$ is an equivalence. By \Cref{fibrewise1}, this is equivalent to asking that, for any $c\in \cO^\otimes_{\underline{n}_+}$, for any $n\geq 0$, the map between the fibres $(G(f))_c\colon GX_c \to GY_c$ is an equivalence of spaces. 
\noindent On the other hand, by \Cref{fibrewise2}, $f\colon X \to Y$ is an equivalence if and only if, for any object $d$ of $\env(\cO)$, the map of fibres $f_d\colon X_d \to Y_d$ is an equivalence of spaces. 
\noindent We conclude by observing that, for any $c\in \cO^\otimes_{\underline{n}_+}\subseteq \env(\cO)$, one has the equivalence $GX_c \simeq X_c$, and that any $d\in \env(\cO)$ is of the form $d=c\in \cO^\otimes_{\underline{n}_+}$ for some $n$.
\end{proof}

\section{The un/straightening equivalence for $\infty$-operads}\label{triku} 

\noindent We have gathered everything we need to state and prove our straightening-unstraightening theorem for $\infty$-operads. Before that, let us summarize the $\infty$-categories of left fibrations we have introduced so far:
\begin{itemize}
	\item For a Lurie $\infty$-operad $\cO^\otimes$, the $\infty$-category $\Leftlax_{\cO^\otimes}$ of operadic left fibrations over $\cO^\otimes$ is the full sub $\infty$-category of ${\ell\Op_\infty}_{/\cO^\otimes}$ spanned by the elements $(\cD^\otimes, p)$ such that $p\colon \cD^\otimes \to \cO^\otimes$ is a left fibration of $\infty$-categories.
	
	\noindent An object $(\cD^\otimes, p)$ in ${\Cat_\infty}_{/\cO^\otimes}$ belongs to $\Leftlax_{\cO^\otimes}$ if and only if $p$ satisfies the lax monoidality condition of \Cref{amsterdam}.
	
	\item For a symmetric monoidal $\infty$-category $\cC^\otimes$, the $\infty$-category $\mathsf{sm}\Left_{\cC^\otimes}$ of sm left fibrations over $\cC^\otimes$ is the full sub $\infty$-category of ${\mathsf{sm}\Cat_\infty}_{/\cC^\otimes}$ spanned by the elements $(\cD^\otimes, p)$ for which $p$ is a left fibration of $\infty$-categories. It is equivalent to the $\infty$-category of commutative monoids in $\Left_\cC^\otimes$.
	
	\item The $\infty$-category $\mathsf{sm}\Leftbeta_{\cC^\otimes}$ of strong sm left fibrations is the full sub $\infty$-category of $\mathsf{sm}\Left_{\cC^\otimes}$ spanned by the sm left fibrations $(\cD^\otimes, p)$ satisfying the condition in \Cref{fibrewise2}. 
\end{itemize}

\begin{theorem}\label{mainnnn}
	For any Lurie $\infty$-operad $\cO^\otimes$, there is an equivalence of $\infty$-categories $$ \adjunction{\st^\cO}{\Leftlax_{\cO^\otimes}}{\alg_{\cO^\otimes}(\cS)}{\unst^\cO},$$ where the left adjoint is given by the composition $$ \st^{\cO}\colon \Leftlax_{\cO^\otimes} \xrightarrow{\env(-)^\otimes}\mathsf{sm}\Leftbeta_{\env(\cO)^\otimes} \xrightarrow{\st^{\env(-),\otimes}} \mathsf{Fun}^{\text{str}}(\env(\cO)^\otimes,\cS^\times)\simeq \alg_{\cO^\otimes}(\cS^\times).$$ 
	
	\noindent In other words, if $(\cT^\otimes, \alpha^\otimes\colon \cT^\otimes \to \cO^\otimes)$ is an operadic left fibration, and $x$ is an object of $\cO$, the value at $x$ of the $\cO^\otimes$-algebra $\st^{\cO^\otimes}(\cT^\otimes, \alpha^\otimes)$ is the space described as $$ \st(\cT^\otimes, \alpha^\otimes)(x)\simeq \env(\cT)\times_{\env(\cO)} \env(\cO)/x.$$ Conversely, given a $\cO^\otimes$-algebra $F$, modeled as a strong monoidal functor $F^\otimes \colon \env(\cO)^\otimes \to \cS^\times$, the operadic left fibration $\unst^\cO(F^\otimes)$ is obtained as the pullback $$ \unst^\cO(F^\otimes)\simeq ({\cS_{\bullet/}}^\times \times_{\cS^\times} \cO^\otimes, \cS_{\bullet/}^\times \times_{\cS^\times} \cO^\otimes \longrightarrow \cO^\otimes),$$ where ${\cS_{\bullet/}}\to \cS$ is the \emph{universal left fibration}, that is, the forgetful functor from pointed spaces to spaces, strong monoidal with respect to the cartesian product.
\end{theorem}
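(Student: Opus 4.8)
The plan is to recognize the claimed equivalence as a composite of three equivalences that are already at our disposal, and then to extract the two pointwise formulas by unwinding the straightening and unstraightening constructions.

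I would first assemble the equivalence. By construction $\st^\cO$ is the composite of $\env(-)^\otimes\colon \Leftlax_{\cO^\otimes}\to \mathsf{sm}\Leftbeta_{\env(\cO)^\otimes}$, the monoidal straightening $\st^{\env(\cO),\otimes}$, and the identification $\mathsf{Fun}^{\text{str}}(\env(\cO)^\otimes,\cS^\times)\simeq \alg_{\cO^\otimes}(\cS^\times)$. The first arrow is an equivalence by \Cref{kriku}; the last identification is precisely the universal property of the symmetric monoidal envelope from \Cref{freepalestine}, instantiated at $\cV^\otimes=\cS^\times$. For the middle arrow I would invoke the monoidal straightening equivalence $\mathsf{sm}\Left_{\env(\cO)^\otimes}\simeq \mathsf{Fun}^{\text{lax}}(\env(\cO)^\otimes,\cS^\times)$ of \Cref{monstr}, and then restrict it to full subcategories: since \Cref{malaga} identifies $\mathsf{sm}\Leftbeta_{\env(\cO)^\otimes}$ with exactly the essential image under $\unst^{\env(\cO),\otimes}$ of the strong monoidal functors, this equivalence cuts down to an equivalence $\mathsf{sm}\Leftbeta_{\env(\cO)^\otimes}\simeq \mathsf{Fun}^{\text{str}}(\env(\cO)^\otimes,\cS^\times)$. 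Composing the three yields the equivalence $\Leftlax_{\cO^\otimes}\simeq \alg_{\cO^\otimes}(\cS^\times)$, with inverse $\unst^\cO$.

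Next I would read off the explicit descriptions. For an operadic left fibration $(\cT^\otimes,\alpha^\otimes)$ and an object $x$ of $\cO$, viewed as a singleton list in $\env(\cO)$ via \Cref{descriptionenv}, the value $\st^\cO(\cT^\otimes,\alpha^\otimes)(x)$ is the value at $x$ of the underlying functor $F\colon \env(\cO)\to\cS$ straightening $\env(\alpha)\colon \env(\cT)\to\env(\cO)$, which by \Cref{fibremanch} is the fibre $\env(\cT)_x$. To match the stated slice expression I would use that the pullback $\env(\cT)\times_{\env(\cO)}\env(\cO)_{/x}\to\env(\cO)_{/x}$ is again a left fibration, classifying the restriction of $F$ along the forgetful functor $\env(\cO)_{/x}\to\env(\cO)$; its underlying space is therefore $\colim_{\env(\cO)_{/x}}F$. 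As $\env(\cO)_{/x}$ has the terminal object $\id_x$, whose inclusion is cofinal, this colimit collapses to $F(x)$, giving $\st^\cO(\cT^\otimes,\alpha^\otimes)(x)\simeq \env(\cT)\times_{\env(\cO)}\env(\cO)_{/x}$. The converse formula for $\unst^\cO$ follows from Cisinski's presentation of unstraightening as pullback along the universal left fibration $\cS_{\bullet/}\to\cS$, read through the envelope.

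The substantive mathematics lives entirely in the earlier results, chiefly \Cref{luppolo} (through \Cref{kriku}) and \Cref{malaga}, so the proof of the theorem is largely a matter of chaining equivalences. The one step demanding genuine care is the restriction of the monoidal straightening equivalence of \Cref{monstr} to the full subcategories of strong objects on each side: this is exactly where \Cref{malaga} is indispensable, since it is what pins the essential image $\mathsf{sm}\Leftbeta_{\env(\cO)^\otimes}$ down to the unstraightenings of strong monoidal functors and thereby guarantees the restricted functor is an equivalence and not merely fully faithful. A secondary point to record carefully is the cofinality argument converting the slice pullback into the fibre, which is what legitimizes the explicit pointwise formula.
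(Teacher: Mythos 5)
Your proposal is correct and follows essentially the same route as the paper: the equivalence is obtained by composing the envelope equivalence of \Cref{kriku}, the monoidal un/straightening of \Cref{monstr} restricted via \Cref{malaga} to the strong objects on both sides, and the universal property of the envelope from \Cref{freepalestine}. Your additional cofinality argument identifying $\env(\cT)\times_{\env(\cO)}\env(\cO)_{/x}$ with the fibre $\env(\cT)_x\simeq F(x)$ is a sound justification of the pointwise formula, which the paper itself does not spell out in the proof (it defers the explicit description to \Cref{coromoro}).
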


\begin{proof}
Consider first the adjunction given by the restriction to operadic left fibrations of the symmetric monoidal envelope: this yields an adjunction  $$\adjunction{\env(-)^\otimes}{\Leftlax_{\cO^\otimes}}{\mathsf{sm}\Leftbeta_{\env(\cO)^\otimes}}{G}, $$ which we have shown in \Cref{kriku} to be an equivalence of $\infty$-categories. 
\noindent Since $\env(\cO)^\otimes$ is a symmetric monoidal category, the straightening-unstraightening equivalence is monoidal, and in particular it induces an equivalence between commutative algebras, which acts as the straightening-unstraightening functor on the underlying objects. We consider the restriction of this latter to sm-left fibrations over the envelope of $\cO^\otimes$ and strong monoidal functors out of $\env(\cO)^\otimes$; we obtain another adjunction $$\adjunction{\st^{\env(\cO),\otimes}}{\mathsf{sm}\Leftbeta_{\env(\cO)^\otimes}}{\mathsf{Fun}^{\text{str}}(\env(\cO)^\otimes,\cS^\times)\simeq \alg_{\cO^\otimes}(\cS)}{\unst^{\env(\cO),\otimes} },$$ which, thanks to \Cref{malaga}, is an equivalence as well. 

\noindent The straightening functor is the composition of the two left adjoints just mentioned, and the unstraightening functor is the composition of the right adjoints. This yields an adjunction  $$ \adjunction{\st^\cO}{\Leftlax_{\cO^\otimes}}{\mathsf{Alg}_{\cO^\otimes}(\cS^\times)}{\unst^\cO},$$ which is an equivalence, as wanted.
\end{proof}

\noindent Exploiting the strictification of the monoidal straightening equivalence for discrete categories proven in \cite[Theorem 4.4]{P:RDLF}, and more precisely the reformulation of \cite[Corollary 5.3]{P:RDLF}, we can also state the following explicit formula for the operadic straightening functor in the special case $\cO^\otimes$ is discrete.

\begin{coro}\label{coromoro}
	For a \emph{discrete} $\infty$-operad $\cO^\otimes$, the operadic straightening equivalence can be explicitely written in the following terms. Given an operadic left fibration $(\cT^\otimes, \alpha^\otimes)$ over $\cO^\otimes$ and an object $x$ of $\cO$, the value at $x$ of the $\cO^\otimes$-algebra $\st^{\cO}(\cT^\otimes, \alpha^\otimes)$ is given by $$ \st^\cO(\cT^\otimes, \alpha^\otimes)(x)\simeq \env(\cT)\times_{\env(\cO)} {\env(\cO)}_{/x}.$$ 
\end{coro}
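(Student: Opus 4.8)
The plan is to reduce the statement to the explicit description of the underlying straightening over a discrete base provided by \cite[Theorem 4.4]{P:RDLF}, in the reformulation of \cite[Corollary 5.3]{P:RDLF}. By the construction in \Cref{mainnnn}, the operadic straightening $\st^\cO$ is the composite of the envelope $\env(-)^\otimes$ with the monoidal straightening $\st^{\env(\cO),\otimes}$. Since the latter is the commutative-algebra enhancement of the ordinary un/straightening equivalence (\Cref{questionable}), its underlying functor is the ordinary straightening $\st^{\env(\cO)}$, and the forgetful functor sends the sm-left fibration $\env(\cT^\otimes,\alpha^\otimes)$ to the underlying left fibration $\env(\alpha)\colon \env(\cT)\to\env(\cO)$. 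Evaluating the resulting strong monoidal functor at an object $x\in\cO\subseteq\env(\cO)$ only sees this underlying functor, so it suffices to compute the value at $x$ of $\st^{\env(\cO)}(\env(\cT),\env(\alpha))$.

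First I would record that discreteness is inherited by the envelope. When $\cO^\otimes$ is discrete, the fibre product $\env(\cO)^\otimes=\cO^\otimes\times_{\mathsf{Fun}(\{0\},\mathsf{Fin}_*)}\mathsf{Act}(\mathsf{Fin}_*)$ is a pullback of nerves of $1$-categories, hence is itself discrete; in particular its underlying category $\env(\cO)$ is an ordinary category, and the same holds for $\env(\cT)$ only insofar as the \emph{base} is concerned. This places us precisely in the situation treated in \cite{P:RDLF}, where the base of the (monoidal) straightening is a discrete category while the total space $\env(\cT)$ and the fibres are allowed to be arbitrary spaces.

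Next I would invoke the strictification: by \cite[Corollary 5.3]{P:RDLF}, for a left fibration $p\colon \cE\to\cB$ over a discrete category $\cB$, the value of the straightening $\st^{\cB}(\cE,p)$ at an object $x$ is computed on the nose by the pullback $\cE\times_{\cB}\cB_{/x}$, regarded as a space. Applying this with $\cB=\env(\cO)$, $\cE=\env(\cT)$ and $p=\env(\alpha)$ gives $\st^{\env(\cO)}(\env(\cT),\env(\alpha))(x)\simeq\env(\cT)\times_{\env(\cO)}\env(\cO)_{/x}$, which is the asserted formula. One can sanity-check the abstract content independently: the pullback $\env(\cT)\times_{\env(\cO)}\env(\cO)_{/x}$ is the restriction of the left fibration $\env(\alpha)$ along the projection $\env(\cO)_{/x}\to\env(\cO)$, so its realization computes $\colim_{\env(\cO)_{/x}}(F\circ\pi)$ for $F=\st^{\env(\cO)}(\env(\cT),\env(\alpha))$; since the slice $\env(\cO)_{/x}$ has a terminal object $(x,\id)$, whose inclusion is cofinal, this colimit reduces to the fibre $F(x)\simeq\env(\cT)_x$, in agreement with \Cref{fibremanch}.

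The main obstacle I anticipate is bookkeeping rather than conceptual. One must make sure that the formula of \cite{P:RDLF}, phrased for the monoidal straightening over a discrete symmetric monoidal category, matches the underlying ordinary straightening used to evaluate algebras at $x$, and that the identification $\env(\cD)_z\simeq(\cD^\otimes)_z$ for $z\in\cO^\otimes_{\underline{m}_+}$ exploited in the proof of \Cref{kriku} is compatible with the slices appearing here. Care is also needed to confirm that restricting to discrete $\cO^\otimes$ genuinely forces $\env(\cO)^\otimes$ discrete, so that the \emph{strict} formula of \cite{P:RDLF} — and not merely its $\infty$-categorical shadow already recorded in \Cref{mainnnn} — is the object being asserted.
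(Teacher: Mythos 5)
Your proposal is correct and follows the same route the paper takes: decompose $\st^\cO$ as $\st^{\env(\cO),\otimes}\circ\env(-)^\otimes$ per \Cref{mainnnn}, observe that evaluation at $x\in\cO\subseteq\env(\cO)$ factors through the underlying straightening of the left fibration $\env(\alpha)\colon\env(\cT)\to\env(\cO)$, note that discreteness of $\cO^\otimes$ forces $\env(\cO)^\otimes$ to be discrete, and then quote the strict formula of \cite[Corollary 5.3]{P:RDLF}. Your cofinality sanity check and the explicit verification that the envelope of a discrete operad is discrete are details the paper leaves implicit, but they do not change the argument.
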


\addtocontents{toc}{\SkipTocEntry}

\providecommand{\bysame}{\leavevmode\hbox to3em{\hrulefill}\thinspace}
\providecommand{\MR}{\relax\ifhmode\unskip\space\fi M`R }
\providecommand{\MRhref}[2]{%
  \href{http://www.ams.org/mathscinet-getitem?mr=#1}{#2}}
\providecommand{\href}[2]{#2}

\bibliographystyle{alpha}
\bibliography{Part_II_oo_v2.bib}
\end{document}